\definecolor{blau}{rgb}{0,0,0.75} %color for in-document links
\newtheorem{theorem}{Theorem}
\newtheorem{lemma}{Lemma}
\newtheorem{prop}{Proposition}
\newtheorem{coroll}{Corollary}
\theoremstyle{definition}
\newtheorem{remark}{Remark}
\newcommand{\fallfak}[2]{\ensuremath{#1^{\underline{#2}}}}
\newcommand{\auffak}[2]{\ensuremath{#1^{\overline{#2}}}}
\newcommand{\Stir}[2]{\genfrac{ \{ }{ \} }{0pt}{}{#1}{#2}}
\newcommand{\C}{\ensuremath{\mathbb{C}}}
\newcommand{\N}{\ensuremath{\mathbb{N}}}
\newcommand{\R}{\ensuremath{\mathbb{R}}}
\newcommand{\Gro}{\ensuremath{\mathcal{O}}}
\newcommand{\law}{\ensuremath{\stackrel{\mathcal{L}}=}}
\newcommand{\claw}{\ensuremath{\xrightarrow{\mathcal{L}}}}
\DeclareMathOperator{\sgn}{\textrm{sgn}}
\DeclareMathOperator{\Arg}{Arg}
\newcommand{\rvR}{\ensuremath{R_{n,\ell}}}
\newcommand{\rvhR}{\ensuremath{\hat{R}_{n,\ell}}}
\newcommand{\rvL}{\ensuremath{L_{n,\ell}}}
\newcommand{\rvY}{\ensuremath{Y_{n,\ell}}}
\newcommand{\rvX}{\ensuremath{X_{n,\ell}}}
\newcommand{\Hank}{\ensuremath{\mathcal{H}}}
\def\P{{\mathbb {P}}}
\def\E{{\mathbb {E}}}
\newcommand{\dzp}{\ensuremath{ \frac{\partial}{\partial z} }}
\newcommand{\dzpZwo}{\ensuremath{ \frac{\partial^2}{\partial z^2} }}
\newcommand{\lo}[1]{\ensuremath{ \log\big( \frac{1}{1-#1}\big)}}
\newcommand{\Lo}[2]{\ensuremath{ \log^{#2}\big( \frac{1}{1-#1}\big)}}
\title{Multiple isolation of nodes in recursive trees}
\author[M.~Kuba]{Markus Kuba}
\address{Markus Kuba\\
Institut f\"ur Angewandte Mathematik und Naturwissenschaften\\
Fachhochschule Technikum Wien\\
H\"ochst\"adtplatz 5, 1200 Wien, Austria}
\email{kuba@dmg.tuwien.ac.at, kuba@technikum-wien.at}
\author[A.~Panholzer]{Alois Panholzer}
\address{Alois Panholzer\\
Institut f{\"u}r Diskrete Mathematik und Geometrie\\
Technische Universit\"at Wien\\
Wiedner Hauptstr. 8-10/104\\
1040 Wien, Austria} \email{Alois.Panholzer@tuwien.ac.at}
\date{\today}
\begin{document}

\begin{abstract}
We introduce the problem of isolating several nodes in random recursive trees
by successively removing random edges, and study the number of random cuts that are necessary for the isolation. 
In particular, we analyze the number of random cuts required to isolate $\ell$ selected nodes in a size-$n$ random recursive tree for three different selection rules, namely $(i)$ isolating all of the nodes labelled $1,2\dots,\ell$ (thus nodes located close to the root of the tree), $(ii)$ isolating all of the nodes labelled $n+1-\ell,n+2-\ell,\dots n$ (thus nodes located at the fringe of the tree), and $(iii)$ isolating $\ell$ nodes in the tree, which are selected at random before starting the edge-removal procedure. Using a generating functions approach we determine for these selection rules the limiting distribution behaviour of the number of cuts to isolate all selected nodes, for $\ell$ fixed and $n \to \infty$.
\end{abstract}

\subjclass[2000]{05C05,60F05} %
\keywords{Recursive trees, labelled trees, cutting down process, node isolation, random cuts}%
\thanks{The second author was funded by the Austrian Science Foundation FWF, grant P25337-N23.}

\maketitle

\section{Introduction\label{MISOSecIntro}}

Meir and Moon \cite{MeiMoo1970,MeiMoo1974} introduced the following edge-removal procedure for 
cutting down a rooted tree. At each step, pick at random one of the edges; keep the subtree containing the root of the tree, and discard the other subtree.
The main parameter of interest is the number of random cuts necessary to isolate the root.
Meir and Moon studied the random variable $X_{n}$, counting the number of edges that will be removed
from a randomly chosen tree of size $n$ (where the size $|T|$ of a tree $T$ is defined as the number of vertices of $T$) by the above edge-removal
procedure until the root is isolated for two important tree families, namely, for \emph{unordered labelled trees}, also known as \emph{Cayley trees},
and for \emph{recursive trees}, a family of so-called increasingly labelled trees.
For both tree families they obtained exact and asymptotic formul{\ae} for the expectation
$\mathbb{E}(X_{n})$ as well as asymptotic formul{\ae} or bounds, respectively, for the second moment $\mathbb{E}(X_{n}^{2})$.  
Concerning Cayley trees and other families of so-called simply generated trees, a Rayleigh limiting distribution was proven in~\cite{Pan2003,Pan2006} and in a more general setting by Janson~\cite{Jan2006};
Janson also obtained a limit law for complete binary trees~\cite{Jan2004}. Holmgren~\cite{Holmgren2010,Holmgren2011} extended Janson's approach to binary search trees, and more generally to the family of split trees. 
A number of works have analyzed the root isolation process and related processes using the connection of Cayley trees to the so-called Continuum Random Tree,
in particular see the work of Addagio-Berry, Broutin and Holmgren~\cite{ABBH2013} and the recent studies~\cite{Abraham1,Abraham2,Bertoin2012,Bertoin2012+}.

\smallskip

For recursive trees the approach of Meir and Moon was extended in~\cite{Pan2004} and results for all $s$-th moments and $s$-th centered moments of
$X_n$ were obtained. Goldschmidt and Martin~\cite{GoldschmidtMartin2005} related the cutting down procedure to the Bolthausen-Sznitman coalescent. 
Drmota et al.~\cite{Drmota2009} obtained a limiting distribution for $X_n$; the stable limit law was reproven using a probabilistic approach by Iksanov and M\"ohle~\cite{IksanovMoehle2007}.
Moreover, we refer the reader to the work of Bertoin~\cite{Bertoin2012b} for further recent results related to the edge-removal procedure.

\subsection{Node isolation in labelled trees}
There exist some works that generalize the edge-removal procedure of Meir and Moon for rooted trees to isolate non-root nodes.
In \cite{PanKu2005II} the reverse procedure, where the subtree containing the root is discarded, was studied for several important tree families.
Furthermore, in \cite{PanKu2005} the random variable $\rvX$ was studied, where $\rvX$ counts the number of random cuts
necessary to isolate the node labelled $\ell$, with $1 \le \ell \le n$, in a random size-$n$ recursive tree. 

In the present work we want to examine the behaviour of the edge-removal procedure when using it to isolate simultaneously a number of specified nodes in the tree.
Thus, in the following we consider a general edge-removal procedure for \emph{labelled} trees,
where we always assume that the labels $1,2,\dots,n$ are distributed amongst the $n$ nodes of a tree of size $n$ (furthermore, we will always identify a node with its label).
Namely, given a tree $T$ of size $n$ and a set of labels $\lambda_1,\dots,\lambda_\ell$, with $1 \le \lambda_{1} < \lambda_{2} < \dots < \lambda_{\ell-1} < \lambda_{\ell} \le n$ and $1\leq \ell \le n$, we will isolate the nodes $\lambda_1,\dots,\lambda_\ell$ as follows. 
We start by picking one of the $n-1$ edges of the tree $T$ uniformly at random (i.e., each edge in the tree might be chosen equally likely and independently of the labels of the nodes we are going to isolate) and removing it. This separates the tree $T$ into a pair of rooted trees; the tree containing the root of the original tree, let us call it $B'$, retains
its root, while the other tree, let us denote it by $B''$, is rooted at the vertex adjacent to the edge that was cut.
If one of these trees $B', B''$ does not contain any of the nodes $\lambda_1,\dots,\lambda_\ell$, we discard it and only keep the other one, otherwise we keep both of them.
Then we continue this procedure to the one or two remaining trees. In general, when we have a forest $F$ consisting of $m$ rooted trees $B_{1}, \dots, B_{m}$ we pick at random one of the edges of $F$ and remove it.
Let us assume this edge is contained in the tree $B_{j}$. Then $B_{j}$ is separated into a pair of rooted trees $B_{j}'$ (containing the root of $B_{j}$) and $B_{j}''$. Again, 
if either $B_{j}'$ or $B_{j}''$ does not contain any of the nodes $\lambda_1,\dots,\lambda_\ell$, we discard it and only keep the other one, otherwise we keep both of them, which, together with the remaining trees $B_{1}, \dots, B_{j-1}, B_{j+1}, \dots, B_{m}$, form a new forest. We continue this procedure until all nodes $\lambda_1,\dots,\lambda_\ell$ are isolated, i.e., until we get a forest consisting of $\ell$ trees, which are the $\ell$ isolated vertices $\lambda_{1}, \dots, \lambda_{\ell}$.
This generalized edge-removal procedure is illustrated in Figure~\ref{fig1}.
\begin{figure}
\begin{center}
\setlength{\unitlength}{1cm}
\begin{picture}(15,3)
\put(0,0){\includegraphics[width=2cm]{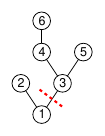}}
\put(3,0){\includegraphics[width=2.5cm]{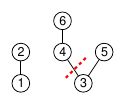}}
\put(6.5,0){\includegraphics[width=2cm]{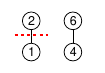}}
\put(9.5,0){\includegraphics[width=2cm]{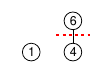}}
\put(12.5,0){\includegraphics[width=2.5cm]{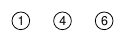}}
\put(2.2,0.8){$\longrightarrow$}
\put(5.7,0.8){$\longrightarrow$}
\put(8.7,0.8){$\longrightarrow$}
\put(11.7,0.8){$\longrightarrow$}
\end{picture}
\caption{Isolating the nodes $1,4,6$ in a recursive tree of size $6$ via the edge-removal procedure by using $4$ cuts.\label{fig1}}
\end{center}
\end{figure}

We are going to study this edge-removal procedure for random recursive trees. A labelled rooted unordered tree $T$ (i.e., there is no left-to-right ordering on the subtrees of any node) of size $n$ is called a recursive tree, if the labels amongst the path from the root node to any node $v \in T$ are always forming an increasing sequence (thus, the family of recursive trees consists of all increasingly labelled unordered trees). It is well-known and easy to show that there are exactly $T_{n} := (n-1)!$ different size-$n$ recursive trees. When we pick one of these $(n-1)!$ recursive trees at random we speak about a random recursive tree of size $n$. Random recursive trees can be generated by a simple growth rule: a random tree $T$ of size $n$ is obtained from a random tree $\tilde{T}$ of size $n-1$ by choosing uniformly at random a node in $\tilde{T}$ and attaching the node labelled $n$ to it.

In contrast to Cayley trees, where the labels are distributed uniformly amongst the nodes of a tree, the label of a node has a strong influence on its expected location in a recursive tree; e.g.,
the depth, i.e., the root-to-node distance, of node $j$ is (for $j \to \infty$) normally distributed with expectation and variance $\sim \log j$ (see, e.g., \cite{MahSmy1995,PanPro2007}).
Thus, we are particularly interested in the influence of the labels of the selected nodes to the general edge-removal procedure and study its behaviour when isolating in a random recursive tree of size $n$ the first $\ell$ inserted nodes, i.e., the nodes labelled $1,2,\dots,\ell$, which are all located near the root, and when isolating the last $\ell$ inserted nodes labelled $n+1-\ell,\dots,n$, which are all located at the fringe. We denote with $X_{n; (\lambda_{1},\lambda_{2},\dots, \lambda_{\ell})}$ the random variable counting the total number of random cuts necessary to isolate the nodes $\lambda_{1},\lambda_{2},\dots, \lambda_{\ell}$, with $1 \le \lambda_{1} < \lambda_{2} < \dots < \lambda_{\ell} \le n$ and $1 \le \ell \le n$ in a random size-$n$ recursive tree and introduce the random variables
\begin{equation*}
R_{n,\ell} := X_{n;(1,\dots,\ell)} \quad \text{and} \quad L_{n,\ell} := X_{n;(n+1-\ell,\dots,n)}.
\end{equation*}
$R_{n,\ell}$ is thus counting the number of removed edges until the nodes labelled
$1,2,\dots,\ell$ (nodes close to the root, for $\ell$ fixed) are isolated and $L_{n,\ell}$ is counting the number of removed edges until the nodes labelled
$n+1-\ell,\dots,n$ (nodes at the fringe and which are leaves with high probability, for $\ell$ fixed) are isolated.

Furthermore, we are interested in the behaviour of the general edge-removal procedure when isolating $\ell$ randomly selected nodes in a random recursive tree of size $n$, i.e., where $\ell$ labels $\lambda_{1}, \lambda_{2}, \dots, \lambda_{\ell}$ are selected uniformly at random amongst all $\binom{n}{\ell}$ subsets of size $\ell$ of $\{1, \dots, n\}$ and the edge-removal procedure isolates these selected nodes in a random recursive tree of size $n$. Let us denote by $U(n,\ell)$ a r.v.\ uniformly distributed on the subsets of $\{1, \dots, n\}$ of size $\ell$, i.e.,
$\mathbb{P}\{U(n,\ell) = (\lambda_{1}, \dots, \lambda_{\ell})\} = \frac{1}{\binom{n}{\ell}}$, for $1 \le \lambda_{1} < \lambda_{2} < \dots < \lambda_{\ell} \le n$ and $1 \le \ell \le n$.
We introduce the random variable
\begin{equation*}
Y_{n,\ell} := X_{n;U(n,\ell)},
\end{equation*}
which counts the number of removed edges until $\ell$ randomly selected nodes are isolated in a random recursive tree of size $n$.

In this work we analyze the limiting distribution behaviour of the random variables $\rvR$, $\rvL$ and $\rvY$, for a fixed number $\ell$ of selected nodes and the tree-size $n$ tending to infinity, 
by treating the distributional recurrences of $\rvR$, $\rvL$ and $\rvY$, respectively, by means of a generating functions approach and applying complex-analytic techniques. For all of these quantities we are able to provide limit laws and state asymptotic expansions of the integer moments. 

%Furthermore, the method of proof leading to the result for $\rvR$ together with a coupling argument allows to determine the limiting behaviour of $X_{n,\ell} = X_{n;(\ell)}$, i.e., the number of random cuts required to isolate %node labelled $\ell$ in a random recursive tree of size $n$, for an arbitrary but fixed $\ell \in \N$ and $n \to \infty$, thus solving a question left open in \cite{PanKu2005}.

\subsection{Notation}
Throughout this paper we use the abbreviations $\fallfak{x}{k}:= x (x-1) \cdots (x-k+1)$ and $\auffak{x}{k} := x (x+1) \cdots
(x+k-1)$ for the falling and rising factorials, respectively. We use the notation $\Stir{s}{j}$ for the Stirling numbers of the second kind, 
appearing in the formula to convert falling factorials into powers: $x^{s}=\sum_{j=0}^{s}\Stir{s}{j} \fallfak{x}{j}$. Furthermore, we use the abbreviations $D_{x}$ for the differential operator with respect to $x$ and $E_{x}$ for the evaluation operator at $x=1$. Moreover, we denote by $X \law Y$ the equality in distribution of the random variables $X$ and $Y$, and by $X_{n} \claw X$ convergence in distribution of the sequence of random variables $X_{n}$ to a random variable $X$.
%, and $\mathcal{N}_{x}$ for the evaluation operator at $x=0$.

\subsection{Auxiliary results about probability distributions}
For the readers convenience we collect a few basic facts about two probability distributions appearing later in our analysis.

A beta-distributed random variable $Z \law \beta(\alpha,\beta)$ with parameters $\alpha,\beta>0$ has 
a probability density function given by $f(x)=\frac1{B(\alpha,\beta)}x^{\alpha-1}(1-x)^{\beta-1}$, where 
$B(\alpha,\beta)=\frac{\Gamma(\alpha)\Gamma(\beta)}{\Gamma(\alpha+\beta)}$ denotes the Beta-function.
The (power) moments of $Z$ are given by 
\[
\E(Z^s)=\frac{\prod_{j=0}^{s-1}(\alpha+j)}{\prod_{j=0}^{s-1}(\alpha+\beta+j)}
=\frac{\fallfak{(\alpha+s-1)}{s}}{\fallfak{(\alpha+\beta-1)}{s}},\quad s\ge 1.
\]
The beta-distribution is uniquely determined by the sequence of its moments. In this work we will discuss
a beta-distributed random variable $Z \law \beta(\ell,1)$, with moments given by
$\E(Z^s)=\frac{\ell}{\ell+s}$, for $s\ge 1$.

\smallskip

A random variable $R$ with cumulative distribution function $\nu$ is called stable, if its characteristic function is given by 
\[
\E(e^{it R})=\int_\R e^{itx}\nu(dx)= \exp\Big(it\mu - c |t|^\alpha\,(1\,-\,i \beta\,\sgn (t) \omega(t,\alpha))\Big),\quad t\in\R,
\]
where $\mu\in\R$, %is a shift parameter
$c>0$, %is a scale factor  
$\beta\in [-1, 1]$, %called the skewness parameter, is a measure of asymmetry
the so-called exponent of stability $\alpha\in (0,2]$, and
\[
 \omega(t,\alpha)=
 \begin{cases}
 \tan(\frac{\pi \alpha}2),\quad\text{if } \alpha\neq 1,\\
 -\frac{2}{\pi}\log|t|, \quad\text{if } \alpha=1.
 \end{cases}
\]
A stable distribution $\nu$ is uniquely determined by the generating quadruple $(\mu, c, \alpha,\beta)$, and 
it is known that $\nu$ is either degenerate, normal, or has the so-called L\'evy spectral function
$M(x)$ of the form 
\[
M(x)=c_1|x|^{-\alpha},\quad x<0,\qquad
M(x)=-c_2|x|^{-\alpha},\quad x>0,
\]
where $c_1,c_2\ge 0$ and $c_1+c_2>0$. In this work we will have a stable distribution 
with index of stability $\alpha=1$, and generating quadruple $(\mu, c, \alpha,\beta)=(0,\frac{\pi}2,1,-1)$, 
such that the characteristic function satisfies
\[
\E(e^{it R})=e^{-\frac{\pi}2(|t|(1-\frac{2i}{\pi}\log|t|\sgn(t)))}
=e^{i t\log|t|-\frac{\pi}2|t|}.
\]
Since the constants $c_1,c_2$ are related to $\alpha,\beta$ in the case $\alpha=1$ by the equations
$\beta=\frac{c_2-c_1}{c_2+c_2}$, and $c=\frac\pi2(c_1+c_2)$, we
%$\frac2\pi c \beta= -c_1+c_2$, $\frac2\pi c =c_1+c_2$, 
% such that $c_2=\frac1\pi c(\beta+1)$, and $c_1=\frac2\pi c-c_2=\frac2\pi c(1-\frac12 (\beta+1))$.
observe that $c_1=1$ and $c_2=0$; thus, the distribution of $R$ is spectrally negative. Note that $R$
arises as a limiting distribution for the discrete Luria-Delbr\"uck distribution, 
and is sometimes called the continuous Luria-Delbr\"uck distribution~\cite{Moehle}.

\subsection{Plan of the paper}
In the next section we present our results concerning the limiting behaviour of the random variables $\rvR$, $\rvL$ and $\rvY$. In Section~\ref{MISOSecPrelim} we use basic combinatorial considerations
to derive the splitting probabilities for the sizes of the trees occurring after a random cut, and set up distributional equations for the random variables of interest. Section~\ref{MISOSecEinsEll} is concerned with the analysis of $\rvR$: we determine a closed form expression for a suitably defined generating function, which allows to deduce a limit law by using complex-analytic techniques.
Section~\ref{MISOSecEnnPlusEinsMinusEll} is devoted to an analysis of $\rvL$, where we use again generating functions, but now in combination with an inductive approach to extract the asymptotic behaviour of all integer moments.
Finally, Section~\ref{MISOSecRandom} shows the results for $\rvY$ with a similar approach.

\section{Results\label{MISOSecResults}}

\begin{theorem}
   \label{MISOthe1stable}
The normalized random variable 
\begin{equation*}
R^*_{n,\ell} := \frac{\rvR-(\ell-1)-\frac{n}{\log n} - \frac{n\log\log n}{(\log n)^2}}{\frac{n}{(\log n)^2}}
\end{equation*}
of $\rvR$ counting the number of random cuts necessary to isolate nodes $1,\dots,\ell$ in a random recursive tree of size $n$
converges, for arbitrary but fixed $\ell\in\N$ and $n \to \infty$, weakly to a stable random variable $R^*$ with characteristic function
\begin{equation*}
\varphi_{R^*}(t) := \mathbb{E}\big(e^{i t R^*}\big) = e^{i t \log|t|-\frac{\pi}{2}|t|}.
\end{equation*}
\end{theorem}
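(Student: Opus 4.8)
The plan is to reduce Theorem~\ref{MISOthe1stable} to the root-isolation case $\ell=1$ treated by Drmota et al.~\cite{Drmota2009} by means of a combinatorial decomposition of the recursive tree along the label set $\{1,\dots,\ell\}$, to package the resulting distributional identity into a generating function available in closed form, and then to extract the stable limit by a (non-standard) coefficient asymptotics. The structural fact I would establish first (using the splitting probabilities of Section~\ref{MISOSecPrelim}) is the following: in a random recursive tree of size $n$ the subtree induced by the labels $\{1,\dots,\ell\}$ is connected -- every root-to-$k$ path with $k\le\ell$ uses only labels $\le\ell$ -- and is distributed as a random recursive tree of size $\ell$ (the \emph{skeleton}); the remaining $n-\ell$ nodes form subtrees hanging from the $\ell$ skeleton nodes, whose sizes $(d_1,\dots,d_\ell)$ are uniformly distributed over the compositions of $n$ into $\ell$ positive parts, and which, given these sizes, are independent random recursive trees rooted at the respective skeleton node.

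Now each of the $\ell-1$ skeleton edges always separates two components that still contain targets, so it is cut exactly once; the cuts performed inside the $i$-th hanging subtree run exactly the root-isolation process on a recursive tree of size $d_i$ (the skeleton node $i$ is the smallest label of that subtree), and these processes are independent given the sizes. Writing $R_{m,1}=X_{m;(1)}$ for the number of cuts isolating the root of a size-$m$ recursive tree, this gives the distributional identity
\begin{equation*}
\rvR\law(\ell-1)+\sum_{i=1}^{\ell}R_{d_i,1}^{(i)},
\end{equation*}
with the $R_{d_i,1}^{(i)}$ independent given $(d_1,\dots,d_\ell)$, hence, for the probability generating functions $\rho_{n,\ell}(v):=\E(v^{\rvR})$ and $f(z,v):=\sum_{m\ge1}\E(v^{R_{m,1}})z^m$,
\begin{equation*}
\rho_{n,\ell}(v)=\frac{v^{\ell-1}}{\binom{n-1}{\ell-1}}\,[z^n]\,f(z,v)^{\ell}.
\end{equation*}
Finally, the one-step recursion $R_{m,1}\law1+R_{m-J,1}$ with $\P\{J=j\}=\frac{m}{(m-1)j(j+1)}$, $1\le j\le m-1$, translates into a first-order linear ordinary differential equation in $z$ for $f(z,v)$, which I would solve in closed form; this is the kind of closed-form generating function announced for Section~\ref{MISOSecEinsEll}, and for $\ell=1$ it is the function analysed in~\cite{Drmota2009}.

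By the continuity theorem for characteristic functions it suffices to prove $\E(e^{itR^*_{n,\ell}})\to e^{it\log|t|-\frac{\pi}{2}|t|}$ for every fixed $t\in\R$. Setting $b_n:=n/(\log n)^2$, $c_n:=(\ell-1)+n/\log n+n\log\log n/(\log n)^2$ and $v_n:=e^{it/b_n}=1+o(1)$, one has $\E(e^{itR^*_{n,\ell}})=e^{-itc_n/b_n}\rho_{n,\ell}(v_n)$, so via the displayed formula everything reduces to the asymptotic evaluation of $[z^n]f(z,v_n)^{\ell}$, followed by division by $\binom{n-1}{\ell-1}\sim n^{\ell-1}/(\ell-1)!$ and multiplication by $v_n^{\ell-1}\to1$ and by the centering factor $e^{-itc_n/b_n}$. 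This is the complex-analytic core: one analyses the dominant singularity of $f(\cdot,v)$ at $z=1$, which for $v$ near $1$ is of non-algebraic ``continuous Luria--Delbr\"uck'' type (driven by the logarithmic term in the differential equation), and carries out a saddle-point / Hankel-contour estimate of $[z^n]f(z,v_n)^{\ell}$, uniformly for $t$ in compact sets, tracking the balance between the $(\log n)^2/n$-size of $\log v_n$ and the $\frac{1}{1-z}$-type singularity. After the centering and scaling built into $R^*_{n,\ell}$ the answer collapses to $e^{it\log|t|-\frac{\pi}{2}|t|}$, the deterministic shift by $\ell-1$ being exactly the contribution of the prefactor $v^{\ell-1}$, i.e.\ of the $\ell-1$ skeleton edges.

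A consistency check, which also predicts the form of the limit: conditioning on $(d_1,\dots,d_\ell)\approx(\alpha_1n,\dots,\alpha_\ell n)$ and invoking the limit law of $R_{m,1}$ from~\cite{Drmota2009}, one expects $R^*_{n,\ell}\claw\sum_{i=1}^{\ell}\alpha_iR^{*(i)}-\sum_{i=1}^{\ell}\alpha_i\log\alpha_i$, where $(\alpha_i)$ is $\mathrm{Dirichlet}(1,\dots,1)$-distributed and the $R^{*(i)}$ are i.i.d.\ copies of $R^*$; a one-line characteristic-function computation shows this mixture equals $R^*$ for \emph{every} admissible $(\alpha_i)$, so the averaging over $(\alpha_i)$ is trivial -- this ``stability'' of the continuous Luria--Delbr\"uck law is the structural reason behind the statement. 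The hard part will be the coefficient asymptotics of $f(z,v_n)^{\ell}$: with $\ell$ fixed and $\log v_n\to0$, the function $f(z,v_n)^{\ell}$ is singular at $z=1$ yet still convergent there, so standard transfer theorems of singularity analysis do not apply and the estimate must be done by hand in the spirit of~\cite{Drmota2009,IksanovMoehle2007}, with all error terms controlled uniformly in $t$; and one must verify that taking the $\ell$-th power and dividing by the polynomial $\binom{n-1}{\ell-1}$ exactly absorbs the extra singular strength, so that the limit law is only shifted (by $\ell-1$) and not otherwise changed. Making the combinatorial decomposition fully rigorous -- the conditional independence of the hanging subtrees, and the fact that each skeleton edge is cut exactly once -- is the remaining, more routine, ingredient.
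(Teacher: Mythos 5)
Your skeleton decomposition gives a genuinely different---and arguably more illuminating---route to the key identity that the paper proves as Proposition~\ref{MISOExplizit}. With $f(z,v)=\sum_{m\ge1}\E(v^{R_{m,1}})z^m=z\,M_1(z,v)$, your formula $\E(v^{\rvR})=\frac{v^{\ell-1}}{\binom{n-1}{\ell-1}}[z^n]f(z,v)^{\ell}$ is exactly equivalent to $M_{\ell}(z,v)=v^{\ell-1}(\ell-1)!\,(M_1(z,v))^{\ell}$. The paper obtains this by translating the splitting probabilities into the first-order ODE~\eqref{MISOode1} and then verifying, by an inductive computation, that the product form satisfies both the ODE and the initial conditions; you instead read it off structurally from the tree: connectivity of the label set $\{1,\dots,\ell\}$ (the increasing-labelling property), P\'olya-urn uniformity of the hanging-subtree sizes over compositions of $n$ into $\ell$ positive parts (whence the $\binom{n-1}{\ell-1}$), conditional independence of the hanging subtrees, the observation that every skeleton edge is cut exactly once, and the fact that cuts inside a hanging subtree are equidistributed with a stand-alone root-isolation process on that subtree (best justified by the uniform ``records'' view of the cutting process). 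This explains \emph{why} the product form must hold rather than merely checking that it does, and it makes the shift by $\ell-1$ transparent. Your Dirichlet-mixture stability check at the end is a pleasant structural complement that predicts the right fixed point, but it is not itself a proof (it would require uniform control in $d_i$ and care with configurations where some $d_i$ is small, since the Dirichlet$(1,\dots,1)$ measure charges the boundary).

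The real gap is the analytic endgame. You correctly identify what is needed---estimate $[z^n]f(z,v_n)^{\ell}$ for $v_n=e^{it/b_n}$ via a Hankel/saddle-point argument around the moving singularity $z_0(v)$, with errors uniform in $t$ on compacts, and check that the extra singular strength from the $\ell$-th power is exactly cancelled by the polynomial factor $\binom{n-1}{\ell-1}$---but you only describe this step. It is precisely the content of the paper's Lemma~\ref{MISOweakLemma4} (built on Lemmas~\ref{MISOweakLemma2} and~\ref{MISOweakLemma3}, which are imported from Drmota et al.), where the contour is split into $\gamma_1,\dots,\gamma_4$, the contributions $I_1,I_3,I_4$ are shown to be lower order, and $I_2$ produces $z_0(v)^{-n}(1+\mathcal{O}(1/\log n))$. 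To turn your outline into a proof you would need to execute that estimate (or cite it) and then perform the final centering and scaling, as the paper does after Lemma~\ref{MISOweakLemma4}. In short: same analytic engine, a sharper and more conceptual front end, and a missing middle that must be supplied.
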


\noindent{\textbf{Remark:}} Using the explicit form of the generating function $M_{\ell}(z,v)$ introduced and studied in Section~\ref{MISOSecEinsEll}
we can further show that the $s$-th integer moment $\E(\rvR^s)$, $s\geq 1$, of $\rvR$ is, for $\ell\in\N$ fixed and $n \to \infty$, asymptotically given by
\begin{equation*}
   \E(\rvR^s) = \frac{n^s}{\log^s n} + \mathcal{O}\Big(\frac{n^s}{\log^{s+1}{n}}\Big).
\end{equation*}
This implies that a non-degenerate limiting distribution result cannot be obtained from the moment's sequence, since $\frac{\log n}{n} \rvR \claw 1$. 
Thus, we omit the computations concerning the $s$-th moments of $\rvR$.

%However, similar to the case $\ell=1$ we obtain the following result.
%   \label{MISOthe1}
%   The $s$-th moment $\E(\rvR^s)$, $s\geq 1$ of the number of cuts necessary to isolate
%   nodes $1,\dots,\ell$ in a random recursive tree of size $n$ is for $\ell\in\N$ and $n \to \infty$ asymptotically given by
%   \begin{equation*}
%      \E(\rvR^s) = \frac{n^s}{\log^s n} + \mathcal{O}\Big(\frac{n^s}{\log^{s+1}{n}}\Big).
%   \end{equation*}
%\end{theorem}
%The results above imply that a non-degenerated limiting distribution cannot be directly obtained using the method of moments, 
%since $\frac{\log n}{n} \rvR \claw 1$. However, similar to the case $\ell=1$ we obtain the following result.

%\begin{coroll}
%   \label{MISOcoroll1stable}
%The normalized random variable 
%\begin{equation*}
%X^*_{n,\ell} := \frac{\rvX-\frac{n}{\log n} - \frac{n\log\log n}{(\log n)^2}}{\frac{n}{(\log n)^2}}
%\end{equation*}
%of $\rvX$ counting the number of random cuts necessary to isolate node $\ell$ in a random recursive tree of size $n$
%converges, for arbitrary but fixed $\ell\in\N$ and $n \to \infty$, weakly to a stable random variable $X^*$ with $X^* \law R^*$ defined as in Theorem~\ref{MISOthe1stable}.
%\end{coroll}

\bigskip

\begin{theorem}
   \label{MISOthe2}
   The $s$-th integer moment $\E(\rvL^{s})$, $s\geq 1$, of the number of random cuts necessary to
   isolate the nodes $n+1-\ell,\dots,n$ in a random recursive tree of size $n$ is, for arbitrary but fixed $\ell\in\N$ and $n \to \infty$, asymptotically given by
   \begin{equation*}
      \E(\rvL^s) = \frac{\ell}{s+\ell} \cdot \frac{n^s}{\log^s n} + \mathcal{O}\Big(\frac{n^s}{\log^{s+1}{n}}\Big).
   \end{equation*}
   Thus, the normalized random variable $\frac{\log n}{n}\rvL$ converges in distribution to a beta-distributed random variable $\beta(\ell,1)$ with parameters $\ell$ and $1$,
   \begin{equation*}
   \frac{\log n}{n}\rvL \claw \beta(\ell,1).
   \end{equation*}
\end{theorem}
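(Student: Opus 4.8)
The plan is to analyze $\rvL$ via the distributional recurrence obtained from the first cut, to encode this recurrence in a suitably weighted generating function, to extract the asymptotics of every integer moment $\E(\rvL^s)$ by an induction on $s$ using singularity analysis, and finally to deduce the limit law by the method of moments. In contrast to $\rvR$, where a closed form for the generating function is available, here I expect no usable closed form, so the inductive extraction of moments is the heart of the matter; the fact that $\beta(\ell,1)$ is determined by its moment sequence $\E(Z^s)=\frac{\ell}{\ell+s}$, recalled in the preliminaries, is exactly what legitimises the last step.

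\emph{Setting up the recurrence.} Condition on the first randomly chosen edge. It splits the size-$n$ recursive tree into the root-part $B'$ and the cut-off part $B''$ rooted at the node below the removed edge; by the combinatorial analysis of Section~\ref{MISOSecPrelim}, conditionally on the sizes $(n-k,k)$ these two pieces are independent uniformly random recursive trees, with explicit splitting probabilities. The key structural observation is that the labels in $B''$ are its root together with labels exceeding it, so the selected fringe labels $n+1-\ell,\dots,n$ that fall into $B''$ are, after order-preserving relabelling, again the \emph{largest} labels of $B''$ — and symmetrically for $B'$. Hence, distinguishing whether $B''$ receives none, exactly $j$ with $1\le j\le\ell-1$, or all of the fringe nodes (in the first case one discards $B''$ and continues in $B'$; in the last case one discards $B'$ and continues in $B''$; in the middle case one keeps both pieces and the two subforests evolve independently), and recording a factor $v$ for the performed cut, one obtains a closed recurrence for $\E(v^{\rvL})$ in terms of $\E(v^{L_{m,j}})$ with $m<n$ and $0\le j\le\ell$. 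The contributions with $j<\ell$ are controlled by an outer induction on $\ell$; those with $j=\ell$ are of convolution type in $m$.

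\emph{Generating function and moments.} Introduce $N_\ell(z,v):=\sum_{n\ge\ell}\E(v^{L_{n,\ell}})\,T_n\,\frac{z^n}{n!}$. The recurrence becomes a first-order linear differential equation in $z$ with parameter $v$, whose solution has its dominant singularity at $z=1$, of logarithmic type governed by $\lo{z}$ (consistent with $N_\ell(z,1)$ being $\lo{z}$ up to a polynomial). Applying $D_v$ and $E_v$ repeatedly produces, for each $s\ge0$, the generating function $G_{\ell,s}(z):=\sum_{n}\E(\fallfak{L_{n,\ell}}{s})\,T_n\,\frac{z^n}{n!}$, which satisfies an inhomogeneous linear differential equation whose inhomogeneity involves only the $G_{\ell',s'}$ with $\ell'<\ell$ or $s'<s$. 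I would then prove by induction on $s$ — base cases $s=0$ trivial and $s=1$ the first-moment estimate obtained directly — that $G_{\ell,s}(z)$ has a singularity at $z=1$ of precisely the algebraic--logarithmic type which, via the transfer theorems, yields $\E(\fallfak{\rvL}{s})=\frac{\ell}{s+\ell}\,\frac{n^s}{\log^s n}+\Gro\!\big(\frac{n^s}{\log^{s+1}n}\big)$, the constant $\frac{\ell}{s+\ell}$ being forced by the recursion for the leading coefficients. Passing from falling factorials to powers via $x^s=\sum_j\Stir{s}{j}\fallfak{x}{j}$ affects only the error term and gives the stated expansion of $\E(\rvL^s)$. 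Finally, since $\E\big((\tfrac{\log n}{n}\rvL)^s\big)\to\frac{\ell}{s+\ell}=\E(\beta(\ell,1)^s)$ for all $s\ge1$ and the limit law is moment-determinate, the method of moments gives $\frac{\log n}{n}\rvL\claw\beta(\ell,1)$.

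\emph{Main obstacle.} The crux is the singularity analysis in the presence of the logarithmic factors: one has to carry the error terms through the double induction (on $\ell$ and on $s$) so that, after each differentiation in $v$ and each application of the transfer theorems, they remain $\Gro(n^s/\log^{s+1}n)$, and one must pin down the exact leading constant $\frac{\ell}{s+\ell}$ rather than merely the order of magnitude $n^s/\log^s n$.
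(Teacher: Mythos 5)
Your strategy coincides with the paper's: set up the distributional recurrence from the splitting probabilities, encode it in a generating function $N_\ell(z,v)$ weighted so that the recurrence becomes a linear ODE in $z$, apply $E_vD_v^s$ to get ODEs for the factorial-moment generating functions, extract the local behaviour at $z=1$ by a double induction on $(\ell,s)$ using singularity analysis with singular differentiation/integration, and close with the method of moments (using that $\beta(\ell,1)$ is moment-determinate). One concrete slip: you assert the ODE for $N_\ell(z,v)$ is first order, whereas the one the paper actually derives (with the weight $(n-1)^{\underline{\ell-2}}$ rather than your $T_n z^n/n! = z^n/n$) is \emph{second} order in $z$; this is also why, unlike for $\rvR$, no closed-form solution is available even for $\ell=1$, as the paper remarks. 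This does not derail your plan — the homogeneous solutions are still explicit enough to run variation of parameters and the induction — but the order of the ODE, the choice of weight needed to make it tractable, and the resulting two-fold integration in the particular solution are details you would have had to discover to complete the argument.
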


%\begin{coroll}
%  \label{MISOcorollBeta}
%   The normalized random variable $\frac{\log
%   n}{n}\rvL$ converges in distribution to a beta-distributed random variable $\beta_{\ell,1}$ with parameters $\ell$ and $1$,
%   \[
%   \frac{\log n}{n}\rvL \claw \beta_{\ell,1},
%   \]
%   with convergence of all non-negative integer moments.
%\end{coroll}

\bigskip

\begin{theorem}
   \label{MISOthe3}
   The $s$-th integer moment $\E(\rvY^{s})$, $s\geq 1$, of the number of random cuts necessary to
   isolate $\ell$ randomly selected nodes in a random recursive tree of size $n$ is, for arbitrary but fixed $\ell\in\N$ and $n \to \infty$, asymptotically given by
   \begin{equation*}
      \E(\rvY^s) = \frac{\ell}{s+\ell} \cdot \frac{n^s}{\log^s n} + \mathcal{O}\Big(\frac{n^s}{\log^{s+1}{n}}\Big).
   \end{equation*}
   Thus, the normalized random variable $\frac{\log n}{n}\rvY$ converges in distribution to a beta-distributed random variable $\beta(\ell,1)$ with parameters $\ell$ and $1$,
   \begin{equation*}
   \frac{\log n}{n}\rvY \claw \beta(\ell,1).
   \end{equation*}
\end{theorem}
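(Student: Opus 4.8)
The plan is to follow the strategy used for $\rvL$ in Section~\ref{MISOSecEnnPlusEinsMinusEll}: start from the distributional recurrence for $\rvY$ obtained in Section~\ref{MISOSecPrelim}, translate it into a family of linear functional equations (indexed by the moment order $s$ and by the number $\ell$ of marked nodes) for suitably normalized generating functions, solve these explicitly, extract the asymptotics of $\E(\rvY^{s})$ by singularity analysis, and then conclude weak convergence by the method of moments. This last step is legitimate because the limit law $\beta(\ell,1)$ is compactly supported, hence uniquely determined by its moments $\E(\beta(\ell,1)^{s})=\frac{\ell}{\ell+s}$ recalled above.

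The feature that makes $\rvY$ tractable — and that distinguishes it from $\rvR$ and $\rvL$ — is that the $\ell$ marked nodes are chosen uniformly at random and independently of the tree and of the sequence of cut edges. Consequently, if the first random cut detaches a subtree of size $k$ (which by Section~\ref{MISOSecPrelim} happens with probability $\frac{n}{(n-1)\,k(k+1)}$, $1\le k\le n-1$), then the number $j$ of marked nodes carried into the detached subtree is hypergeometrically distributed, $\P\{j\}=\binom{k}{j}\binom{n-k}{\ell-j}/\binom{n}{\ell}$, and — conditionally on the sizes — the two surviving pieces, each with its uniformly placed marked nodes, are independent and reduce to the corresponding smaller instances. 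Writing $Y_{m,0}:=0$ (a piece containing no marked node is discarded and contributes nothing further), this yields
\begin{equation*}
\rvY \law 1 + Y_{n-k,\,\ell-j} + Y_{k,\,j}.
\end{equation*}

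Next I would pass to generating functions. With $f_{n,\ell}(v):=\E(v^{\rvY})$, choosing a normalization that clears the denominators $n-1$ and $\binom{n}{\ell}$ — for instance $N_{\ell}(z,v):=\sum_{n\ge\ell}\binom{n}{\ell}(n-1)\,f_{n,\ell}(v)\,z^{n}$ — the convolution on the right-hand side of the recurrence turns into a first-order linear differential equation for $N_{\ell}(z,v)$ whose inhomogeneity is assembled from the $N_{j}(z,v)$ with $j<\ell$. Applying the operators $D_{v}^{s}E_{v}$ produces, for each $s\ge1$ and each $\ell$, an explicitly solvable linear equation for the generating function of $\big(\binom{n}{\ell}(n-1)\E(\rvY^{s})\big)_{n}$, whose solution is, up to asymptotically negligible terms, a finite combination of functions of the shape $\frac{1}{1-z}\Lo{z}{r}$ with $r$ a non-negative integer. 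Singularity analysis (transfer theorems) then gives
\begin{equation*}
\E(\rvY^{s})=\frac{\ell}{s+\ell}\cdot\frac{n^{s}}{\log^{s}n}+\Gro\Big(\frac{n^{s}}{\log^{s+1}n}\Big),
\end{equation*}
the constant $\frac{\ell}{s+\ell}$ emerging from the hypergeometric averaging; note it equals $\E(\beta(\ell,1)^{s})$, consistent with the heuristic that, after scaling by $\frac{n}{\log n}$, $\rvY$ is governed by the largest marked label, which is $\sim\beta(\ell,1)\cdot n$.

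I expect the main difficulty to be the inductive bookkeeping that pins down the exact leading constant $\frac{\ell}{s+\ell}$ while keeping the error term uniformly under control: one must verify that the logarithmic singularities recombine with precisely the right coefficients through both the recursion in $s$ and the recursion in $\ell$, and that the $\Gro(n^{s}/\log^{s+1}n)$ errors do not accumulate. Granting the moment asymptotics, the conclusion is immediate: $\E\big((\tfrac{\log n}{n}\rvY)^{s}\big)\to\frac{\ell}{s+\ell}$ for all $s\ge1$, these are exactly the moments of a $\beta(\ell,1)$ random variable, and moment-determinacy of that law gives $\frac{\log n}{n}\rvY\claw\beta(\ell,1)$.
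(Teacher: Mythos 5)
Your overall plan matches the paper's: exploit the hypergeometric split of marked nodes coming from the independence of the random selection, set up generating functions for the moment sequences, solve the resulting equations, apply singularity analysis, and conclude by the method of moments (the $\beta(\ell,1)$ law being compactly supported, hence moment-determinate). Two of the concrete claims you make in the middle of the argument, however, are wrong and would derail the execution.

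First, you assert that the recurrence turns into a \emph{first-order} linear ODE for your $N_{\ell}(z,v)$. It does not. The splitting weight $p_{n,k}=\frac{n}{(n-1)(n-k+1)(n-k)}$ contributes a double inverse factor in $n-k$, which on the generating-function side produces an iterated integral; clearing it requires differentiating twice, and the resulting equation for the natural choice $G_{\ell}(z,v)=\sum_{n\ge\ell}\frac{\binom{n}{\ell}}{n}\E(v^{\rvY})z^{n}$ (the paper's normalization; yours carries an extra factor of $n^2$ and does not simplify matters) is genuinely second order in $z$, see the paper's \eqref{MISOGellzvODE}. The paper obtains this efficiently by first forming the trivariate function $G(z,v,u)=\sum_{\ell}G_{\ell}(z,v)u^{\ell}$, deriving a single nonlinear second-order ODE \eqref{MISOGzvuODE}, and then reading off the coefficient of $u^{\ell}$; you will want some such device to organize the convolution over the split $\ell=r+(\ell-r)$. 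Only after applying $E_{v}D_{v}^{s}$ does the equation become effectively first order in $G_{\ell,s}'(z)$, which is why the solution is a double integral \eqref{MISOGellssol}.

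Second, and more seriously, your description of the singular structure is off: you claim the moment generating functions are, up to negligible terms, combinations of $\frac{1}{1-z}\log^{r}\!\big(\frac{1}{1-z}\big)$. With any normalization of the form $\binom{n}{\ell}\cdot(\text{poly in }n)\cdot\E(\rvY^{s})$, the coefficients grow like $n^{\ell+s+\text{const}}/\log^{s}n$, so the dominant singularity must carry a factor $(1-z)^{-(\ell+s)+\text{const}}$, not a simple pole. The paper shows $G_{\ell,s}(z)\sim\frac{\alpha_{\ell,s}}{(1-z)^{\ell+s}\log^{s}(\frac{1}{1-z})}$ with $\alpha_{\ell,s}=\frac{s!}{\ell+s}\binom{\ell+s-1}{s}$, established by a double induction in $(\ell,s)$ on the integral representation; the leading constant $\frac{\ell}{\ell+s}$ then comes out through the recurrence \eqref{MISOalphaellsrec} and a Vandermonde convolution, rather than ``directly from the hypergeometric averaging'' as you suggest. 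Your heuristic about the largest marked label scaling like $\beta(\ell,1)\cdot n$ is a plausible sanity check but plays no role in the rigorous argument. Everything else — the use of $\Delta$-regularity and closure under singular integration/differentiation, the passage from factorial to ordinary moments, and the final moment-convergence step — is as you envision.
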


\section{Preliminaries\label{MISOSecPrelim}}

First let us consider the procedure for isolating nodes $1, 2, \dots, \ell$ of a given random recursive tree $T$ of size $|T|=n$ via random cuts.
After removing a randomly chosen edge of $T$, it splits into two subtrees $T^{(1)}$ and $T^{(2)}$, where we assume that $T^{(1)}$ contains the node labelled $\ell$.
A very important property of recursive trees that allows the approach presented is the \emph{randomness preservation property\footnote{This property is called splitting property by Bertoin~\cite{Bertoin2012b}; see Panholzer \cite{Pan2006} for a characterization of all simply generated trees possessing this property and~\cite{Bertoin2012b} for a recent discussion of this attribute.}}: both subtrees $T^{(1)}, T^{(2)}$ are, after an order-preserving relabelling with labels $1, 2, \dots, |T^{(1)}|$ and $1, 2, \dots, |T^{(2)}|$, respectively, again random recursive trees of respective sizes. In order to setup a distributional equation for the random variable $\rvR$ we have to keep track of the sizes of $T^{(1)}$ and $T^{(2)}$, respectively, after the edge-removal.
Moreover, we have to take into account the distribution of the nodes labelled $1,2,\dots,\ell$ in the original tree $T$ over the two subtrees $T^{(1)}$ and $T^{(2)}$.
To do this we use purely combinatorial arguments to derive in Subsection~\ref{MISOSSecSplitProb} the so-called \emph{splitting probabilities} $p_{(n,\ell),(k,r)}$, which give the probability that, when starting with a random size-$n$ recursive tree and removing a random edge, the subtree containing node $\ell$ is of size $k$ and where furthermore node $\ell$ is the $r$-th smallest node in this subtree.
Formul\ae\, for $p_{(n,\ell),(k,r)}$ already occurred in \cite{PanKu2005}, but in order to keep the present work self-contained we reproduce a slightly adapted proof of them.
These splitting probabilities readily yield a distributional equation for $\rvR$ as stated in Subsection~\ref{MISOSSecDistEqn}.

For the problem of isolating nodes $n+1-\ell, \dots, n$ in $T$ the situation is very similar, but one has to keep track of node $n+1-\ell$ (instead of node $\ell$) in the occurring subtrees $T^{(1)}$ and $T^{(2)}$ after a random cut and to take into account the distribution of the nodes labelled $n+1-\ell, \dots, n$ in the original tree $T$ over the subtrees.
Again, by using the splitting probabilities $p_{(n,\ell),(k,r)}$ a distributional equation for $\rvL$ can be established, which is carried out in Subsection~\ref{MISOSSecDistEqn}.

When isolating $\ell$ randomly selected nodes in $T$ the situation is considerably easier and in order to state a distributional equation for $\rvY$ it suffices to know the splitting probabilities $p_{n,k}$, which give the probability that, when removing a random edge of a random size-$n$ recursive tree, the subtree containing the original root node is of size $k$ (whereas the other one is of size $n-k$). These probabilities $p_{n,k}$ have been computed already in \cite{MeiMoo1974}; however, they also occur as a special instance of the more general splitting probabilities $p_{(n,\ell),(k,r)}$, since it holds $p_{n,k} = p_{(n,1),(k,1)}$ due to the fact that the root node (label $1$) of the original tree is in any case the smallest node in the corresponding subtree.
For the sake of completeness we state in Subsection~\ref{MISOSSecSplitProb} the probabilities $p_{n,k}$, which are then used in Subsection~\ref{MISOSSecDistEqn} to deduce a distributional equation for $\rvY$.

\subsection{Splitting probabilities\label{MISOSSecSplitProb}}
\begin{lemma}[~\cite{PanKu2005}]
   \label{MISOlem1}
   The splitting probabilities $p_{(n,\ell),(k,r)}$ are, for $1 \le \ell \le n$,
   $1 \le r \le k$, $1 \le k \le n-1$ and $n \ge 2$, given as follows:
   \begin{align*}
      & p_{(n,\ell),(k,r)} =
      \begin{cases}
         & \left[(\ell-1)\binom{n-\ell}{n-k} + \binom{n-\ell+1}{n-k+1}\right]
         \frac{(k-1)! (n-k-1)!}{(n-1)(n-1)!}, \quad r=\ell, \\[3mm]
         & \left[ \binom{\ell-1}{r} \binom{n-\ell}{k-r} + \binom{\ell-1}{r-2} \binom{n-\ell}{k-r}\right]
         \frac{(k-1)! (n-k-1)!}{(n-1)(n-1)!}, \quad r<\ell.
      \end{cases}
   \end{align*}
\end{lemma}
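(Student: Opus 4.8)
\medskip
\noindent The plan is to express $p_{(n,\ell),(k,r)}$ as a ratio counting pairs $(T,e)$, where $T$ is a size-$n$ recursive tree and $e$ an edge of $T$, and then to reduce the statement to an elementary binomial computation. The key first step is the following consequence of the randomness preservation property in its bijective form: if one deletes an edge $e$ from $T$ and lets $A\subseteq\{2,\dots,n\}$ be the label set of the resulting non-root component (which is automatically rooted at $\min A$), then
\[
\P\{\text{non-root component after a random cut has label set }A\}=\frac{(|A|-1)!\,(n-1-|A|)!\,(\min A-1)}{(n-1)!\,(n-1)}.
\]
Indeed, a size-$n$ recursive tree with a distinguished non-root subtree on label set $A$ is equivalent to the data of a recursive tree on $A$ (there are $(|A|-1)!$ of them), a recursive tree on $\{1,\dots,n\}\setminus A$ (there are $(n-1-|A|)!$ of them), and a choice of parent for $\min A$ among the nodes with smaller label in the second tree, i.e.\ among $1,\dots,\min A-1$; dividing the resulting count by the total number $(n-1)!\,(n-1)$ of pairs (tree, edge) yields the displayed formula.

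\medskip
\noindent Next I would obtain $p_{(n,\ell),(k,r)}$ by summing this probability over all admissible label sets $A$, splitting according to whether the selected node $\ell$ lies in the non-root component ($\ell\in A$) or in the root component ($\ell\notin A$). In the first case the subtree containing $\ell$ is the non-root one, so the constraints are $|A|=k$, $\ell\in A$, with exactly $r-1$ elements of $A$ taken from $\{1,\dots,\ell-1\}$ and $k-r$ from $\{\ell+1,\dots,n\}$; this can only occur for $r<\ell$, since $r=\ell$ would force $1\in A$. In the second case the subtree containing $\ell$ is the root component, whence $|A|=n-k$, with exactly $\ell-r$ elements of $A$ from $\{1,\dots,\ell-1\}$ and $n-k-\ell+r$ from $\{\ell+1,\dots,n\}$. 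In every situation $\min A$ equals either $\ell$ (only if $r=1$ and $\ell\in A$), or the smallest chosen element below $\ell$, or---only in the case $r=\ell$, where necessarily $\ell\notin A$---the smallest chosen element above $\ell$; hence each sum over $A$ factors into a binomial coefficient counting the ``elements above $\ell$'' times an elementary sum $\sum_{S}(\min S-1)$ over the $p$-element subsets $S$ of an integer interval.

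\medskip
\noindent Finally, I would evaluate these elementary sums and recombine. From the hockey-stick identity $\sum_{i=0}^{M}\binom{i}{m}=\binom{M+1}{m+1}$ together with $i\binom{i}{m}=(m+1)\binom{i}{m+1}+m\binom{i}{m}$ one gets $\sum_{S:\,|S|=p,\,S\subseteq\{1,\dots,q\}}(\min S-1)=(q-p)\binom{q}{p}-p\binom{q}{p+1}$, plus an obvious additive shift when the underlying interval does not start at $1$. For $r<\ell$ the relevant specializations are $q=\ell-1$ with $p=r-1$ (the $\ell\in A$ part) and $p=\ell-r$ (the $\ell\notin A$ part); adding the two contributions and using the identity $(\ell-1)\binom{\ell-1}{r-1}=r\binom{\ell-1}{r}+(\ell-r+1)\binom{\ell-1}{r-2}$ collapses everything to $\big(\binom{\ell-1}{r}+\binom{\ell-1}{r-2}\big)\binom{n-\ell}{k-r}$, which after restoring the common factor $\tfrac{(k-1)!(n-k-1)!}{(n-1)(n-1)!}$ is exactly the stated value. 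For $r=\ell$ only the root-component part survives, the relevant sum being $\sum_{A\subseteq\{\ell+1,\dots,n\},\,|A|=n-k}(\min A-1)$; evaluating it by the same device and applying Pascal's rule once yields $(\ell-1)\binom{n-\ell}{n-k}+\binom{n-\ell+1}{n-k+1}$, again matching the claim. I expect the only real obstacle to be careful bookkeeping---handling the boundary values of $r$ (such as $r=1$) and keeping the two component cases consistent, so that the several sub-case sums really telescope to the two compact expressions in the lemma; no analytic input is required.
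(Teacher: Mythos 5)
Your proposal is correct and uses essentially the same approach as the paper: both are direct combinatorial counts of $(\text{tree},\text{cut edge})$ pairs exploiting the splitting property, and both boil down to the same binomial bookkeeping. The only organizational difference is that you sum first over the label set $A$ of the detached subtree (with weight $\min A-1$ for the parent choices), packaged as a clean explicit formula for $\P\{\text{detached label set}=A\}$, whereas the paper sums first over the parent's label $j$ and then over label distributions — i.e.\ the same double sum taken in the other order.
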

\begin{proof}
If we remove an edge $e$ of a size-$n$ recursive tree $T$ we split the tree into two subtrees:
we denote with $B'$ the subtree containing the original root, i.e., label $1$,
and with $B''$ the other subtree, which is rooted at the vertex adjacent to the edge $e$ that was cut.
After an order-preserving relabelling with labels $\{1, \dots, |B'|\}$ and $\{1, \dots, |B''|\}$, respectively, both subtrees can be considered as recursive trees. Furthermore we denote with $T^{(1)}$ the arising subtree, which contains the node labelled by $\ell$ in the original tree,
and with $T^{(2)}$ the other subtree; we assume that this subtree $T^{(1)}$ has size $k$, with $1 \le k \le n-1$, and that it contains exactly $r$ nodes of the set
$\{1,2,\dots,\ell\}$ including the node labelled $\ell$. Apparently this implies that the tree $T^{(2)}$ is of size $n-k$ and contains $\ell-r$ nodes of the set $\{1,2,\dots,\ell\}$.
We distinguish now the cases $r=\ell$ and $1 \le r < \ell$.

If $r=\ell$ then it follows that $T^{(1)}=B'$, since all nodes labelled $1,2,\dots,\ell$ have to be contained in $T^{(1)}$, 
in particular the original root labelled $1$. We want to determine the number of possibilities of removing an edge $e$ of a recursive tree of size $n$
leading (after an order-preserving relabelling) to the pair $(B', B'')$ of subtrees.
To do this we count the number of different ways of distributing the labels $\{1, \dots, n\}$ order-preserving
over $B'$ and $B''$ and adjoining the root of $B''$ to a node of $B'$ (by inserting edge $e$),
such that the resulting tree is a recursive tree.
We consider now the node of $B'$ incident with $e$: if the node of $B'$ incident with $e$ has label $j$,
with $1 \le j \le k$, then it follows that
the labels of $B''$ must all be larger than $j$.
For $1\leq j \leq \ell$ we can choose $n-k$ of the labels $\ell+1, \ell+2, \dots, n$
and distribute them order-preserving over $B''$,
whereas the remaining labels are distributed order-preserving over $B'$,
leading to $\binom{n-\ell}{n-k}$ possibilities.
For $\ell+1 \leq j \leq k$ we can choose $n-k$ of the labels $j+1, j+2, \dots, n$
and distribute them order-preserving over $B''$,
whereas the remaining labels are distributed order-preserving over $B'$,
leading to $\binom{n-j}{n-k}$ possibilities.
Thus this quantity is independent of the actual choice of $B'$ with $|B'| = k$ and $B''$ with $|B''| = n-k$.
Since there are $T_{k} = (k-1)!$ and $T_{n-k} = (n-k-1)!$ different recursive trees of size $k$ and $n-k$, respectively,
this leads, together with the fact that there are $n-1$ ways of selecting an edge $e$ for any of the $T_{n} = (n-1)!$
recursive trees of size $n$, to the following formula:
\begin{align*}
p_{(n,\ell),(k,\ell)} & =  \left[\ell\binom{n-\ell}{n-k} + \sum_{j=\ell+1}^{k}\binom{n-j}{n-k}%
                     \right]\frac{(k-1)!(n-k-1)!}{(n-1)(n-1)!}\\
                & =  \left[(\ell-1)\binom{n-\ell}{n-k} + \binom{n-\ell+1}{n-k+1}\right]%
                     \frac{(k-1)! (n-k-1)!}{(n-1)(n-1)!},
\end{align*}
appealing to a well known identity.

If $r < \ell$ we have to distinguish further between the two cases $T^{(1)}=B'$ and $T^{(1)}=B''$. If $T^{(1)}=B'$
and we distribute the labels $\{1, \dots, n\}$ order-preserving over $B'$ and $B''$,
we have the restriction that exactly $\ell-r$ nodes of the nodes $2,\dots,\ell-1$ have to be in $B''$.
If $T^{(1)}=B''$ then we have the restriction that exactly $r-1$ nodes of the nodes $2,\dots,\ell-1$ have to be
in $B''$. Proceeding the same way as before we obtain eventually the following formula.
\begin{align*}
p_{(n,\ell),(k,r)} & =  \left[\binom{n-\ell}{n-k - (l-r)}\sum_{j=1}^{r-1}\binom{\ell-1-j}{\ell-r}%
                     + \binom{n-\ell}{k-r}\sum_{j=1}^{\ell-r}\binom{\ell-1-j}{r-1}\right]\\
                     & \quad \times \frac{(k-1)!(n-k-1)!}{(n-1)(n-1)!}\\
                & =  \left[\binom{\ell-1}{r-2}\binom{n-\ell}{k-r} %
                     + \binom{\ell-1}{r}\binom{n-\ell}{k-r} \right]%
                     \frac{(k-1)! (n-k-1)!}{(n-1)(n-1)!}
\end{align*}%
\end{proof}
The particular instance $\ell=1$ and $r=1$ in Lemma~\ref{MISOlem1} rederives the well-known formula for the splitting probabilities $p_{n,k}$.
\begin{coroll}[~\cite{MeiMoo1974}]
   \label{MISOcor1}
   The splitting probabilities $p_{n,k}$ are, for $1 \le k \le n-1$ and $n \ge 2$, given as follows:
   \begin{equation*}
      p_{n,k} = \frac{n}{(n-1) (n-k+1) (n-k)}.
   \end{equation*}
\end{coroll}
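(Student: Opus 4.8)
The plan is to specialize Lemma~\ref{MISOlem1} to $\ell = 1$. First I would observe that the quantity $p_{n,k}$ — the probability that, after a random cut, the subtree $B'$ containing the original root has size $k$ — coincides with $p_{(n,1),(k,r)}$ for $\ell=1$: the node labelled $1$ is the root, hence in any case the smallest label of whichever subtree it lies in, so necessarily $r=1$; moreover the subtree $T^{(1)}$ containing node $1$ is precisely $B'$ in the notation of Lemma~\ref{MISOlem1}. Consequently $p_{n,k} = p_{(n,1),(k,1)}$, and we are in the case $r=\ell$ of the lemma.

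Next I would substitute $\ell = 1$ and $r = 1$ into the first branch of the formula for $p_{(n,\ell),(k,r)}$. The summand $(\ell-1)\binom{n-\ell}{n-k}$ then vanishes, leaving
\begin{equation*}
p_{(n,1),(k,1)} = \binom{n}{n-k+1}\,\frac{(k-1)!\,(n-k-1)!}{(n-1)\,(n-1)!}.
\end{equation*}
It then remains to rewrite $\binom{n}{n-k+1} = \binom{n}{k-1} = \frac{n!}{(k-1)!\,(n-k+1)!}$, cancel the factor $(k-1)!$, and use $\frac{n!}{(n-1)!} = n$ together with $\frac{(n-k-1)!}{(n-k+1)!} = \frac{1}{(n-k)(n-k+1)}$ to arrive at $\frac{n}{(n-1)(n-k+1)(n-k)}$, as claimed.

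There is essentially no obstacle here, since the statement is an immediate algebraic consequence of a lemma already established; the only point deserving a moment's care is the boundary value $k = n-1$, where $(n-k-1)! = 0! = 1$ and the formula correctly yields $p_{n,n-1} = \frac{n}{2(n-1)}$. Alternatively, the identity $p_{n,k} = p_{(n,1),(k,1)}$ could be checked directly by the same counting argument used in the proof of Lemma~\ref{MISOlem1}, but the specialization above is the shortest route and keeps the exposition self-contained.
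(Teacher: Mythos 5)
Your proposal is correct and follows the same route the paper takes: the paper derives Corollary~\ref{MISOcor1} precisely by specializing Lemma~\ref{MISOlem1} to $\ell=1$, $r=1$ (the case $r=\ell$) and simplifying. You merely spell out the elementary binomial/factorial algebra that the paper leaves implicit, and the computation is correct, including the observation that $p_{n,k}=p_{(n,1),(k,1)}$ because node $1$ is the root and hence always the smallest label in its subtree.
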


\subsection{Distributional equations\label{MISOSSecDistEqn}}
Using the splitting probabilities $p_{(n,\ell),(k,r)}$ given in Subsection~\ref{MISOSSecSplitProb} we can readily set up a distributional equation for 
the random variable $\rvR$. In this context it is appropriate to define also the r.v.\ $R_{n,0}$ (i.e., the number of cuts to isolate $0$ nodes) via $R_{n,0} = 0$, for $n \ge 1$.
When considering a random recursive tree of size $n \ge 2$ and eliminating a random edge one immediately gets
\begin{equation}
\label{MISOdistributionalR1}
\rvR \law R^{(1)}_{I_n,J_\ell} + R^{(2)}_{n-I_n,\ell-J_\ell}+1,\quad \text{for $n\ge 2$ and $1\le \ell\le n$},
\end{equation}
where $I_n$ counts the size of the subtree containing the node originally labelled $\ell$ after removing a random edge, and $J_\ell$ counts the number of nodes of the set $\{1,2,\dots,\ell\}$ contained in
this subtree. The random variables $(R_{k,r}^{(j)})_{k\ge 1,1\le r\le k}$, $j=1,2$, have the same distribution as $(R_{n,\ell})_{n\ge 1, 1\le \ell\le n,}$, and the variables $I_n$, $J_\ell$
are independent of $(R_{k,r}^{(j)})_{k\ge 1,1\le r\le k}$, $j=1,2$. The initial value is given by $R_{1,1}=0$. 
Further note that by combinatorial reasoning it is apparent that $R_{\ell,\ell}=\ell-1$, for $\ell \ge 1$, since a tree of size $\ell$ contains exactly $\ell-1$ edges, which all have to be eliminated.

To benefit from recurrence \eqref{MISOdistributionalR1} one requires the joint distribution of $I_n$ and $J_\ell$, for $1 \le \ell \le n$,
$1 \le r \le k$, $1 \le k \le n-1$ and $n \ge 2$, but due to previous considerations this is exactly given by the splitting probabilities, i.e.,
\begin{equation*}
\P\{I_n=k,J_\ell=r\}= p_{(n,\ell),(k,r)}.
\end{equation*}
After some simplifications one gets the following expression, which is advantageous for further computations:
\begin{equation}
\label{MISOdistributionalR2}
\P\{I_n=k,J_\ell=r\}= p_{(n,\ell),(k,r)}=
    \begin{cases}
         & \left[\ell-1 + \frac{n-\ell+1}{n-k+1} \right]
         \frac{\fallfak{(k-1)}{\ell-1}}{(n-1)(n-k)\fallfak{(n-1)}{\ell-1}}, \quad r=\ell, \\[3mm]
         & \left[ \binom{\ell-1}{r}  +  \binom{\ell-1}{r-2} \right]
         \frac{ \fallfak{(k-1)}{r-1}\fallfak{(n-k-1)}{\ell-r-1}}{(n-1)\fallfak{(n-1)}{\ell-1}}, \quad r<\ell.
      \end{cases}
\end{equation}

\medskip

Analogeously, the random variable $\rvL$ satisfies a distributional equation similar to $\rvR$, where again it is appropriate to introduce also the r.v.\ $L_{n,0}$  via $L_{n,0} = 0$:
\begin{equation}
\label{MISOdistributionalL1}
\rvL \law L^{(1)}_{\hat{I}_n,\hat{J}_\ell} + L^{(2)}_{n-\hat{I}_n,\ell-\hat{J}_\ell}+1, \quad \text{for $n\ge 2$ and $1\le \ell\le n$},
\end{equation}
where $\hat{I}_n$ counts the size of the subtree containing the node originally labelled $n+1-\ell$ after removing a random edge, and $\hat{J}_\ell$ counts the number of nodes of the set $\{n+1-\ell,n+2-\ell,\dots,n\}$ contained in this subtree. Here, again the random variables on the right hand side are independent copies of $\rvL$ that are independent of the variables $\hat{I}_n$, $\hat{J}_\ell$. The initial value is given by $L_{1,1}=0$. 

The joint distribution of the random variables $\hat{I}_n$ and $\hat{J}_\ell$ is, for $1 \le \ell \le n$,
$1 \le r \le k$, $1 \le k \le n-1$ and $n \ge 2$, again determined by the splitting probabilities via
\begin{equation*}
\P\{\hat{I}_n=k,\hat{J}_\ell=r\}= p_{(n,n+1-\ell),(k,k+1-r)}.
\end{equation*}
In succeeding computations we will use the following expression, which is obtained after some simplifications:
\begin{equation}
\label{MISOdistributionalL2}
  \begin{split}
\P\{\hat{I}_n=k,\hat{J}_\ell=r\}&= 
           \frac{\binom{\ell-1}{r-1}\big[ \fallfak{(k-1)}{r-2}\fallfak{(n-k-1)}{\ell-r} %
        + \fallfak{(k-1)}{r}\fallfak{(n-k-1)}{\ell-r-2}\big]}{(n-1)\fallfak{(n-1)}{\ell-1}}%
        \\
    & +  \delta_{k,n+r-\ell}\binom{\ell}{r-1}\frac{(\ell-r-1)!}{(n-1)\fallfak{(n-1)}{\ell-r}}
         , \quad r\leq \ell,
      \end{split}
\end{equation}
where $\fallfak{(j-1)}{-p} :=(\auffak{j}{p})^{-1}$, for $p\in\N$, and $\delta$ denotes the Kronecker-delta function.

\medskip

Finally, the random variable $\rvY$ satisfies the following distributional equation, with $Y_{n,0} = 0$, for $n \ge 1$, and the initial value $Y_{1,1}=0$:
\begin{equation}
\label{MISOdistributionalY1}
\rvY \law Y^{(1)}_{\tilde{I}_n,\tilde{J}_\ell} + Y^{(2)}_{n-\tilde{I}_n,\ell-\tilde{J}_\ell}+1, \quad \text{for $n\ge 2$ and $1\le \ell\le n$},
\end{equation}
where $\tilde{I}_n$ counts the size of the subtree containing the original root of the tree after removing a random edge, and $\tilde{J}_\ell$ counts the number of selected nodes, which shall be isolated, contained in this subtree. The random variables on the right hand side are independent copies of $\rvY$ that are independent of the variables $\tilde{I}_n$, $\tilde{J}_\ell$. 

The joint distribution of the random variables $\tilde{I}_n$ and $\tilde{J}_\ell$ is then, for $1 \le \ell \le n$,
$1 \le r \le k$, $1 \le k \le n-1$ and $n \ge 2$, determined by the splitting probabilities $p_{n,k}$ via
\begin{equation}
\label{MISOdistributionalY2}
\P\{\tilde{I}_n=k,\tilde{J}_\ell=r\}= \frac{\binom{k}{r} \binom{n-k}{\ell-r}}{\binom{n}{\ell}} \cdot p_{n,k}
= \frac{\binom{k}{r} \binom{n-k}{\ell-r}}{\binom{n}{\ell}} \cdot \frac{n}{(n-1) (n-k+1)(n-k)}.
\end{equation}

\section{Isolating the nodes \texorpdfstring{$1,\dots,\ell$}{1,...,l}\label{MISOSecEinsEll}}

\subsection{Deriving suitable generating functions solutions}
To treat the distributional equation \eqref{MISOdistributionalR1} for the r.v.\ $\rvR$ we will, for $\ell \ge 1$, introduce suitable generating functions via
\begin{equation}
\label{MISOgen1}
   M_{\ell}(z,v) := \sum_{n \ge \ell}(n-1)^{\underline{\ell-1}} \, \E(v^{\rvR})z^{n-\ell} = \sum_{n \ge \ell}\sum_{m \ge 0} (n-1)^{\underline{\ell-1}} \, \mathbb{P}\{\rvR=m\} z^{n-\ell} v^{m}. 
\end{equation}

This yields a description of the problem by means of a differential equation, which turns out to be very useful later on.

\begin{prop}\label{MISOpropMellzv}
The generating function $M_\ell(z,v)$ satisfies for $\ell \ge 1$ the following first order linear differential equation:
\begin{equation}
  \label{MISOode1}
  \dzp M_{\ell}(z,v) - (\ell f(z,v)-(\ell-1)g(z,v))M_{\ell}(z,v) = g(z,v)\cdot b_\ell(z,v),
\end{equation}
with functions
\begin{equation}
\label{MISOweak1}
f(z,v)=\frac{v\lo{z}}{(1-z)v \lo{z} + z(1-v)}, \quad g(z,v)=\frac{1}{(1-z)v \lo{z} + z(1-v)},
\end{equation}
and 
\begin{equation}
\label{MISObellzv}
   b_\ell(z,v) = v \sum_{r=1}^{\ell-1} \left[ \binom{\ell-1}{r} + \binom{\ell-1}{r-2}\right]
   M_{r}(z,v) M_{\ell-r}(z,v),
\end{equation}
and the initial condition $M_{\ell}(0,v) = v^{\ell-1}(\ell-1)!$. 
\end{prop}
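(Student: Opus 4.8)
The plan is to turn the distributional recurrence \eqref{MISOdistributionalR1} into a recurrence for the probability generating functions $m_{n,\ell}(v):=\E\bigl(v^{\rvR}\bigr)$ and then run it through the weighting $(n-1)^{\underline{\ell-1}}z^{n-\ell}$ that defines $M_\ell(z,v)$ in \eqref{MISOgen1}. Taking $\E(v^{\,\cdot\,})$ in \eqref{MISOdistributionalR1}, using the joint law \eqref{MISOdistributionalR2} of $I_n,J_\ell$ and $R_{n,0}=0$ (so $m_{n,0}\equiv1$), one gets for $n\ge2$ and $1\le\ell\le n$
\[
m_{n,\ell}(v)=v\sum_{k=\ell}^{n-1}p_{(n,\ell),(k,\ell)}\,m_{k,\ell}(v)+v\sum_{r=1}^{\ell-1}\sum_{k}p_{(n,\ell),(k,r)}\,m_{k,r}(v)\,m_{n-k,\ell-r}(v).
\]
Multiplying by $(n-1)^{\underline{\ell-1}}$ is the decisive bookkeeping step: it cancels exactly the factor $(n-1)^{\underline{\ell-1}}$ in the denominators in \eqref{MISOdistributionalR2}. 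Setting $a_n:=(n-1)^{\underline{\ell-1}}m_{n,\ell}(v)$, so that $M_\ell(z,v)=\sum_{n\ge\ell}a_n z^{n-\ell}$, the $r<\ell$ part produces the Cauchy product $\sum_k(k-1)^{\underline{r-1}}(n-k-1)^{\underline{\ell-r-1}}m_{k,r}m_{n-k,\ell-r}=[z^{n-\ell}]\bigl(M_r(z,v)M_{\ell-r}(z,v)\bigr)$, hence contributes precisely $\tfrac1{n-1}[z^{n-\ell}]b_\ell(z,v)$ with $b_\ell$ as in \eqref{MISObellzv}.

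For the $r=\ell$ part I would use $\sum_{i\ge1}z^i/i=\log\tfrac1{1-z}$ and $\sum_{i\ge1}z^i/(i(i+1))=1-\tfrac{1-z}{z}\log\tfrac1{1-z}$ to recognise, writing $L:=\log\tfrac1{1-z}$ and $G:=1-\tfrac{(1-z)L}{z}$, the identities $\sum_{k=\ell}^{n-1}\tfrac{a_k}{n-k}=[z^{n-\ell}](M_\ell L)$ and $\sum_{k=\ell}^{n-1}\tfrac{a_k}{(n-k+1)(n-k)}=[z^{n-\ell}](M_\ell G)$. After splitting $\tfrac{n-\ell+1}{n-1}=1-\tfrac{\ell-2}{n-1}$, the $r=\ell$ part becomes $v\,[z^{n-\ell}](M_\ell G)+\tfrac1{n-1}[z^{n-\ell}]\bigl(v(\ell-1)M_\ell L-v(\ell-2)M_\ell G\bigr)$. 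Summing all contributions over $n$, and converting each $\tfrac1{n-1}$-weighted piece into an integral via $\tfrac{z^m}{m+\ell-1}=z^{1-\ell}\int_0^z t^{\ell-2+m}\,dt$, one is led to the integral equation
\[
M_\ell(z,v)=vG\,M_\ell(z,v)+z^{1-\ell}\int_0^z t^{\ell-2}\Bigl(v(\ell-1)L\,M_\ell-v(\ell-2)G\,M_\ell+b_\ell\Bigr)\,dt,
\]
where for $\ell=1$ the sum runs from $n=2$ with constant term $m_{1,1}=1$ (the bracketed series vanishing at $z=0$), and for $\ell\ge2$ the recurrence is already valid at $n=\ell$, which makes the passage consistent and recovers $M_\ell(0,v)=(\ell-1)!\,m_{\ell,\ell}(v)=(\ell-1)!\,v^{\ell-1}$ from $R_{\ell,\ell}=\ell-1$.

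Finally I would multiply the integral equation by $z^{\ell-1}$, differentiate in $z$, divide by $z^{\ell-2}$, and collect the terms proportional to $M_\ell$. Everything collapses by two elementary identities for $L$ and $G$: first $zG=z-(1-z)L$, whence $z(1-vG)=(1-z)vL+z(1-v)$, i.e.\ exactly the common denominator of $f$ and $g$ in \eqref{MISOweak1}; and second $\partial_z(zG)=G+z\,\partial_z G=L$. The coefficient of $M_\ell$ then reduces to $(\ell-1)(vL-1)+vL=\ell vL-(\ell-1)$, so one obtains $\bigl[(1-z)vL+z(1-v)\bigr]\dzp M_\ell=(\ell vL-(\ell-1))M_\ell+b_\ell$; dividing through by $(1-z)vL+z(1-v)$ and using $f=vLg$ gives \eqref{MISOode1}, and $M_\ell(0,v)=v^{\ell-1}(\ell-1)!$ is the initial condition just noted. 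The step I expect to be the main obstacle is the conversion of the $\tfrac1{n-1}$-weighted partial sums into the integral operator: one must keep careful track of the lower limit of integration and of the $n=\ell$ boundary term, since a spurious constant of the shape $c\,v^{\ell-1}$ threatens to survive and has to be seen to cancel — this cancellation being precisely the consistency of the recurrence with $R_{\ell,\ell}=\ell-1$ (equivalently the identity $\sum_{r=1}^{\ell-1}\bigl(\binom{\ell-1}{r}+\binom{\ell-1}{r-2}\bigr)(r-1)!\,(\ell-r-1)!=(\ell-1)\,(\ell-1)!$). Once that is in hand, the differentiation and the final simplification are short and rest only on the two identities for $L$ and $G$ above.
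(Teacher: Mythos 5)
Your proposal is correct and follows essentially the same route as the paper: translate the distributional recurrence into a recurrence for the probability generating functions, multiply by $(n-1)\fallfak{(n-1)}{\ell-1}z^{n-\ell}$, sum over $n$, and reduce to the first-order ODE — exactly the computation the paper announces and then omits as "lengthy but straightforward." You fill in those omitted steps carefully, including the $L$/$G$ convolution identities, the integral trick for the $\frac{1}{n-1}$ weight, and the boundary checks at $n=\ell$ and $\ell=1$.
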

\begin{proof}
{}From the distributional equation \eqref{MISOdistributionalR1} we immediately
obtain the following recurrence for the probability generating function of $\rvR$:
\begin{align}
\label{MISOrec1}
   \E(v^{\rvR}) = v\sum_{r=1}^{\ell} \sum_{k=r}^{n-1}\P\{I_n=k,J_\ell=r\} \E(v^{R^{(1)}_{k, r }})\E(v^{R^{(2)}_{n-k, \ell-r }}), \quad \text{for $1 \le \ell \le n$ and $n \ge 2$},
\end{align}
with initial value $\E(v^{R_{1,1}})= 1$ and where the probabilities $\P\{I_n=k,J_\ell=r\} = p_{(n,\ell),(k,r)}$ are given in~\eqref{MISOdistributionalR2}.
%Note that by combinatorial considerations it is obvious that $\E(v^{R_{\ell,\ell}})= v^{\ell-1}$. 
Multiplying recurrence~\eqref{MISOrec1} by $(n-1)\fallfak{(n-1)}{\ell-1}z^{n-\ell}$ and taking the summation over all $n\geq \ell$ leads to the differential equation (we omit here these lengthy, but straightforward computations)
\begin{equation*}
  \Big((1-z)v \lo{z} + z(1-v)\Big)\dzp M_{\ell}(z,v)  +  \Big(\ell-1 -\ell v \lo{z} \Big) M_{\ell}(z,v) = b_\ell(z,v),
\end{equation*}
with $b_{\ell}(z,v)$ given by \eqref{MISObellzv}. Simple manipulations and using \eqref{MISOweak1} yield the stated differential equation~\eqref{MISOode1}. 
Note that $M_{\ell}(0,v) =(\ell-1)^{\underline{\ell-1}} \E(v^{R_{\ell,\ell}})= v^{\ell-1}(\ell-1)!$, since $R_{\ell,\ell}=\ell-1$, for $\ell \ge 1$.
\end{proof}

Somewhat surprisingly, the solution of the initial value problem in Proposition~\ref{MISOpropMellzv} can be stated explicitly and implies the following preliminary result.

\begin{prop}
\label{MISOExplizit}
Let $f(z,v)$ and $g(z,v)$ defined as in \eqref{MISOweak1}.
Then the generating functions $M_{\ell}(z,v)$ are for $\ell \ge 1$ given by the following explicit expressions:
\begin{equation}\label{MISOexplicit}
\begin{split}
M_{1}(z,v) & = e^{\int_{0}^{z} f(t,v)dt}, \\
M_{\ell}(z,v) & = v^{\ell-1} (\ell-1)! \big(M_{1}(z,v)\big)^{\ell} = v^{\ell-1} (\ell-1)! \exp\Big(\int_0^{z}\ell f(t,v)dt\Big), \quad \text{for $\ell \ge 2$}.
\end{split}
\end{equation}
\end{prop}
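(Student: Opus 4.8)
The plan is to establish both formulas by a single induction on $\ell$, at each step exhibiting the claimed expression as a solution of the initial value problem in Proposition~\ref{MISOpropMellzv} and then invoking uniqueness. It is convenient to note first that the asserted formula can be written uniformly for all $\ell\ge 1$ as $M_\ell(z,v)=v^{\ell-1}(\ell-1)!\,\big(M_1(z,v)\big)^\ell$ with $M_1(z,v)=\exp\big(\int_0^z f(t,v)\,dt\big)$, and that $f(z,v)$ is analytic at $z=0$: the denominator in \eqref{MISOweak1} equals $(1-z)v\lo z+z(1-v)=z+\Gro(z^2)$ and the numerator $v\lo z=vz+\Gro(z^2)$, so both have a simple zero at the origin and $f(0,v)=v$. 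Hence $M_1(z,v)$ is a well-defined power series with $M_1(0,v)=1$ and $\dzp M_1=f\,M_1$. For the base case $\ell=1$ the sum in \eqref{MISObellzv} is empty, so $b_1\equiv0$ and \eqref{MISOode1} reduces to $\dzp M_1=f(z,v)M_1$ with $M_1(0,v)=1$, whose unique power-series solution is exactly $\exp\big(\int_0^z f(t,v)\,dt\big)$.

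For the inductive step, assume the formula holds for all $1\le r<\ell$ and set $N_\ell:=v^{\ell-1}(\ell-1)!\,(M_1)^\ell$. Differentiating and using $\dzp M_1=f\,M_1$ gives $\dzp N_\ell=\ell f\,N_\ell$, so the left-hand side of \eqref{MISOode1} evaluated at $N_\ell$ collapses to $(\ell-1)g(z,v)\,N_\ell$; it therefore remains to verify $b_\ell(z,v)=(\ell-1)\,N_\ell$. Substituting the induction hypothesis into \eqref{MISObellzv} yields $M_r M_{\ell-r}=v^{\ell-2}(r-1)!\,(\ell-r-1)!\,(M_1)^\ell$ and hence
\[
b_\ell(z,v)=v^{\ell-1}(M_1)^\ell\sum_{r=1}^{\ell-1}\big[\tbinom{\ell-1}{r}+\tbinom{\ell-1}{r-2}\big](r-1)!\,(\ell-r-1)!.
\]
Thus the argument is reduced to the binomial–factorial identity
\[
\sum_{r=1}^{\ell-1}\big[\tbinom{\ell-1}{r}+\tbinom{\ell-1}{r-2}\big](r-1)!\,(\ell-r-1)!=(\ell-1)\cdot(\ell-1)!,
\]
which I would prove by simplifying $\binom{\ell-1}{r}(r-1)!(\ell-r-1)!=\frac{(\ell-1)!}{r}$ and $\binom{\ell-1}{r-2}(r-1)!(\ell-r-1)!=(\ell-1)!\,\frac{r-1}{(\ell-r)(\ell-r+1)}$, re-indexing the second sum by $s=\ell-r$, and telescoping; the harmonic-number contributions cancel and the total equals $\ell-1$.

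This gives $b_\ell=(\ell-1)N_\ell$, so $N_\ell$ solves \eqref{MISOode1}, and since also $N_\ell(0,v)=v^{\ell-1}(\ell-1)!=M_\ell(0,v)$, uniqueness of the solution forces $M_\ell=N_\ell$, completing the induction. For the uniqueness step I would avoid the simple pole of $g(z,v)$ at $z=0$ by working with the cleared form $\big((1-z)v\lo z+z(1-v)\big)\dzp M_\ell+\big(\ell-1-\ell v\lo z\big)M_\ell=b_\ell$ from the proof of Proposition~\ref{MISOpropMellzv}: extracting the coefficient of $z^n$ expresses $(n+\ell-1)m_n$ in terms of $m_0,\dots,m_{n-1}$ and the already-known right-hand side, and $n+\ell-1\neq0$ for $\ell\ge2$ and $n\ge0$, so the power-series solution is unique.

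I expect the main obstacle to be purely computational bookkeeping: correctly reducing $b_\ell$ via the induction hypothesis and establishing the binomial–factorial identity above; everything else — analyticity of $f$ at the origin, the collapse of the left-hand side for the ansatz $N_\ell$, and the coefficient-extraction uniqueness argument — is routine. The one subtlety worth stating explicitly is that $g(z,v)$ is singular at $z=0$, so uniqueness should be argued from the cleared equation (or the coefficient recurrence) rather than from a naive integrating factor anchored at $0$.
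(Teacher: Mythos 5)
Your proof is essentially the paper's: both verify by induction that the candidate $v^{\ell-1}(\ell-1)!\,(M_1)^\ell$ satisfies the initial value problem of Proposition~\ref{MISOpropMellzv}, with the inductive step reducing to the same binomial--factorial identity $\sum_{r=1}^{\ell-1}\big[\binom{\ell-1}{r}+\binom{\ell-1}{r-2}\big](r-1)!(\ell-r-1)!=(\ell-1)(\ell-1)!$. The only substantive addition on your side is making the power-series uniqueness step explicit (via coefficient extraction from the cleared form of the ODE, which avoids the pole of $g$ at $z=0$), a point the paper leaves implicit.
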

\begin{proof}
First one can check easily that the given generating functions indeed satisfy the initial conditions as stated in Proposition~\ref{MISOpropMellzv}, i.e., $M_{\ell}(0,v) = v^{\ell-1} (\ell-1)!$, $\ell \ge 1$.
To show that the functions stated also satisfy the differential equation~\eqref{MISOode1} we use induction.
Plugging $\ell=1$ into \eqref{MISOode1} it simplifies to 
\begin{equation*}
  \dzp M_{1}(z,v) - f(z,v) M_{1}(z,v)  = 0,
\end{equation*}
which is satisfied by formula~\eqref{MISOexplicit}.
Now let us assume that \eqref{MISOexplicit} holds for all $1 \le r < \ell$.
Then, the expression $b_{\ell}(z,v)$ defined in \eqref{MISObellzv} simplifies as follows:
\begin{align*}
  b_{\ell}(z,v) & = v \sum_{r=1}^{\ell-1} \left[\binom{\ell-1}{r} + \binom{\ell-1}{r-2}\right] M_{r}(z,v) M_{\ell-r}(z,v) \\
  & = v^{\ell-1} (M_{1}(z,v))^{\ell} \cdot \sum_{r=1}^{\ell-1} \left[\binom{\ell-1}{r} + \binom{\ell-1}{r-2}\right] (r-1)! (\ell-r-1)! \\
  & = v^{\ell-1} (\ell-1)! (M_{1}(z,v))^{\ell} \cdot \sum_{r=1}^{\ell-1} \left(\frac{1}{r} + \frac{r-1}{(\ell-r+1)(\ell-r)}\right).
\end{align*}
The sum can be evaluated easily:
\begin{align*}
  & \sum_{r=1}^{\ell-1} \left(\frac{1}{r} + \frac{r-1}{(\ell-r+1)(\ell-r)}\right) = \sum_{r=1}^{\ell-1} \frac{1}{r} + \sum_{r=1}^{\ell-1} \frac{\ell-r-1}{(r+1)r}
  = \sum_{r=1}^{l-1}\frac{\ell}{(r+1)r} \\
  & \quad = \ell \sum_{r=1}^{\ell-1} \left(\frac{1}{r} - \frac{1}{r+1}\right) = \ell \big(1-\frac{1}{\ell}\big) = \ell-1,
\end{align*}
which yields
\begin{equation}
\label{MISOformulabellzv}
  b_{\ell}(z,v) = (\ell-1) v^{\ell-1} (\ell-1)! (M_{1}(z,v))^{\ell}.
\end{equation}
Plugging the function $v^{\ell-1} (\ell-1)! (M_{1}(z,v))^{\ell}$ stated in \eqref{MISOexplicit} into the left hand side of the differential equation \eqref{MISOode1}
one gets after straightforward computations
\begin{align*}
  & \dzp M_{\ell}(z,v) - (\ell f(z,v)-(\ell-1)g(z,v))M_{\ell}(z,v) \\
  & \enspace\enspace = v^{\ell-1} (\ell-1)! \ell (M_{1}(z,v))^{\ell-1} \dzp M_{1}(z,v)
  - (\ell f(z,v) - (\ell-1) g(z,v)) v^{\ell-1} (\ell-1)! (M_{1}(z,v))^{\ell} \\
  & \enspace\enspace = g(z,v) \cdot (\ell-1) v^{\ell-1} (\ell-1)! (M_{1}(z,v))^{\ell},
\end{align*}
which, due to \eqref{MISOformulabellzv}, matches with the right hand side $g(z,v) b_{\ell}(z,v)$ of \eqref{MISOode1}, i.e., formula~\eqref{MISOexplicit} also holds for the value $\ell$.
\end{proof}

\subsection{Establishing a weak limit law by exploiting the singular structure}
As mentioned previously, the so-called method of moments, which will be applied for the analysis of the random variables $\rvL$ and $\rvY$ in the succeeding sections, is not suited for the derivation of a non-degenerate limit law of $\rvR$. Instead, we will determine the singular structure of the generating function $M_{\ell}(z,v)$, as obtained in the previous subsection. Then, we use complex analytic methods to determine the asymptotic behaviour of the $n$-th coefficients of $M_{\ell}(z,v)$, and to obtain a weak limit law. In order to do so, we will build on earlier results concerning the case $\ell=1$: one should here give much credit to Drmota et al.~\cite{Drmota2009}, who have established this case. Since by
\[
M_{\ell}(z,v)=v^{\ell-1}(\ell-1)! \big(M_1(z,v)\big)^{\ell}, \quad \ell\ge 2,
\]
we can relate the analysis of $M_{\ell}(z,v)$ to the corresponding analysis of the special case $\ell=1$, following
closely the arguments of~\cite{Drmota2009}. We start by determining the singularities of $$f(z,v)=\frac{v\lo{z}}{(1-z)v \lo{z} + z(1-v)},$$ as defined in~\eqref{MISOweak1}: the function is singular at $z=1$ due to the logarithmic factor. However, it was observed that the function has another singularity $z_0(v)$, which coincides with $z=1$ for $v=1$. We collect a result of~\cite{Drmota2009}.
\begin{lemma}
\label{MISOweakLemma2}
Set $v=e^w$ and suppose that $|\arg(w)|\le \pi-\delta$ and $0<|w|<\eta$ for some $\delta>0$ and some sufficiently small $\eta>0$. Then, for every $w$ in that range there is exactly one zero of the mapping
$z\mapsto \frac{1}{f(z,e^w)}$, that is asymptotically given by
\[
z_0(e^w)=1-\frac{w}{\log \frac1w}+\frac{w\log\log \frac1w}{\log^2\frac1w}+\mathcal{O}\bigg(\frac{w\big(\log\log \frac1w\big)^2}{\log^3\frac1w} \bigg),
\]
uniformly as $w\to 0$ and $|\arg(w)|\le \pi-\delta$.
\end{lemma}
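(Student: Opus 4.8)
The plan is to clear denominators, perform two successive changes of variable that turn the defining equation into a contractive fixed-point equation, solve it by the Banach fixed-point theorem, and extract the asymptotic expansion by bootstrapping; throughout, I would follow closely the treatment of the corresponding computation in Drmota et al.~\cite{Drmota2009}. First I would clear denominators. By \eqref{MISOweak1}, $\frac{1}{f(z,v)}=\frac{(1-z)v\lo{z}+z(1-v)}{v\lo{z}}$, and its denominator $v\lo{z}$ vanishes in $\C\setminus[1,\infty)$ only at $z=0$, where a short local expansion of $\lo{z}$ gives $\frac1{f(0,v)}=\frac1v\neq0$. Hence a zero of $z\mapsto\frac1{f(z,e^w)}$ is exactly a solution of $(1-z)e^w\lo{z}+z(1-e^w)=0$; writing $u=1-z$ and dividing by $e^w$ this becomes
\begin{equation*}
u\log\frac1u=(1-u)(1-e^{-w}),
\end{equation*}
and since $1-e^{-w}=w\bigl(1+\mathcal{O}(w)\bigr)$ we should look for a small solution $u$, with $u\log\frac1u$ small as well.

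Next I would substitute $\zeta=\log\frac1u$ (principal branch); since $u=1-z$ avoids $(-\infty,0]$ whenever $z$ lies in the slit plane, this identifies a small (slit) neighbourhood of $u=0$ with a half-strip $\{\Re\zeta>M,\ |\Im\zeta|<\pi\}$. Solving $\zeta e^{-\zeta}=(1-e^{-\zeta})(1-e^{-w})$ for the exponential gives $e^{-\zeta}=\frac{1-e^{-w}}{\zeta+1-e^{-w}}$, hence the fixed-point equation
\begin{equation*}
\zeta=T_w(\zeta):=\log\bigl(\zeta+1-e^{-w}\bigr)-\log\bigl(1-e^{-w}\bigr),
\end{equation*}
with $-\log(1-e^{-w})=\log\frac1w+\mathcal{O}(w)$ in principal branches. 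This is where the hypotheses enter: $|\arg w|\le\pi-\delta$ and $\eta$ small keep $1-e^{-w}$ and $\zeta+1-e^{-w}$ off the ray $(-\infty,0]$, and force $\log\frac1w$ to have large real part $\log\frac1{|w|}$ and bounded imaginary part.

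Then I would run a contraction argument on the closed disc $\mathcal{D}=\{\zeta:\ |\zeta-\log\frac1w|\le C\log\log\frac1w\}$. For $\zeta\in\mathcal{D}$ one has $T_w(\zeta)-\log\frac1w=\log(\zeta+1-e^{-w})+\mathcal{O}(w)$, of modulus $(1+o(1))\log\log\frac1{|w|}$, so $T_w(\mathcal{D})\subseteq\mathcal{D}$ for suitable $C$ and small $\eta$; moreover $T_w'(\zeta)=(\zeta+1-e^{-w})^{-1}=\mathcal{O}\bigl(1/\log\frac1{|w|}\bigr)$ uniformly for $w$ in the sector, so $T_w$ is a contraction. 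Banach's theorem then gives a unique fixed point $\zeta^\star=\zeta^\star(w)\in\mathcal{D}$, hence — unwinding the two injective substitutions — a unique zero $z_0(e^w)=1-e^{-\zeta^\star}$ of $z\mapsto\frac1{f(z,e^w)}$ near $z=1$, with $z_0(e^w)\to1$ since $\Re\zeta^\star\to\infty$. For the asymptotics I would bootstrap inside $\zeta^\star=T_w(\zeta^\star)$: from $\zeta^\star=\log\frac1w+\mathcal{O}(\log\log\frac1w)$ one obtains $\zeta^\star=\log\frac1w+\log\log\frac1w+\mathcal{O}\bigl(\tfrac{\log\log\frac1w}{\log\frac1w}\bigr)$, which is enough precision; plugging this into $z_0=1-\frac{1-e^{-w}}{\zeta^\star+1-e^{-w}}$ and expanding the denominator as a geometric series in $\frac{\log\log\frac1w}{\log\frac1w}$ produces
\begin{equation*}
z_0(e^w)=1-\frac{w}{\log\frac1w}+\frac{w\log\log\frac1w}{\log^2\frac1w}+\mathcal{O}\Bigl(\frac{w(\log\log\frac1w)^2}{\log^3\frac1w}\Bigr).
\end{equation*}

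Finally, to upgrade ``unique near $z=1$'' to ``the only zero in the domain needed for the later singularity analysis'' (a disc of radius slightly larger than $|z_0(e^w)|$, or a suitable $\Delta$-domain), I would argue as in~\cite{Drmota2009}: outside a small fixed disc about $z=1$ the factor $(1-z)e^w\lo{z}$ is bounded below in modulus while $z(1-e^w)=\mathcal{O}(|w|)$, so Rouché's theorem forbids any further zero of $(1-z)e^w\lo{z}+z(1-e^w)$ there. I expect the genuine obstacle to be the uniformity in $w$ over the whole sector $|\arg w|\le\pi-\delta$ combined with the bookkeeping of the nested principal logarithms — choosing $M$, $C$, $\eta$ consistently so that $\mathcal{D}$ lies in the half-strip and $T_w$ contracts there; once this set-up is in place, the contraction step and the bootstrap are routine.
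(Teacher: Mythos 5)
The paper does not prove this lemma; it simply states it as ``a result of~\cite{Drmota2009}'' and defers entirely to that reference, so there is no ``paper's own proof'' to compare against. Your reconstruction is, however, correct and it is the standard route (the one Drmota et al.\ themselves take): clear the denominator of $1/f$ after checking the removable pole at $z=0$, substitute $u=1-z$ and then $\zeta=\log\frac1u$ to obtain the Lambert-$W$-type fixed-point equation $\zeta=\log(\zeta+1-e^{-w})-\log(1-e^{-w})$, run a contraction on a disc of center $\log\frac1w$ and radius $\asymp\log\log\frac1{|w|}$ to get existence and local uniqueness, bootstrap to $\zeta^\star=\log\frac1w+\log\log\frac1w+\mathcal{O}\bigl(\frac{\log\log(1/w)}{\log(1/w)}\bigr)$, and then expand $z_0=1-\frac{1-e^{-w}}{\zeta^\star+1-e^{-w}}$; a Rouch\'e argument away from $z=1$ disposes of other zeros. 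I checked the bookkeeping: the precision $\mathcal{O}\bigl(\frac{\log\log(1/w)}{\log(1/w)}\bigr)$ in $\zeta^\star$ is indeed enough, since the dominant contribution to the $\mathcal{O}\bigl(\frac{w(\log\log\frac1w)^2}{\log^3\frac1w}\bigr)$ error comes from the $\frac{(\log\log\frac1w)^2}{\log^2\frac1w}$ term of the geometric series, not from the residual error in $\zeta^\star$, and the $1-e^{-w}=w(1+\mathcal{O}(w))$ corrections are sub-dominant in the sector because $w\ll\frac{(\log\log\frac1w)^2}{\log\frac1w}$. Two cosmetic points worth fixing in a written-up version: the radius of $\mathcal{D}$ must be the real quantity $C\log\log\frac1{|w|}$ (you slip between $\log\log\frac1w$ and $\log\log\frac1{|w|}$), and you should state explicitly in which region of $z$ the uniqueness claim is meant (a disc of radius slightly beyond $|z_0|$, say $1+\Theta(|w|/\log\frac1{|w|})$, cut along $[1,\infty)$), since that is what the Cauchy integral in Lemma~\ref{MISOweakLemma4} actually uses.
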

By Proposition~\ref{MISOExplizit} one can obtain an asymptotic expansion of $M_1(z,v)$ and thus of $M_\ell(z,v)$ using the singularity structure of $f(z,v)$. We state the following important result of~\cite{Drmota2009}:
\begin{lemma}
\label{MISOweakLemma3}
Let $v=v(n)=e^{i\lambda n^{-1}(\log n)^A}$ for a real number $\lambda\neq 0$ and some $A>1$. Then, 
\[
M_1(1,v)=e^{\log n-(A-1)\log\log n+\mathcal{O}(1)},
\]
and 
\begin{equation}
\begin{split}
\label{MISOweakLemma3P1}
M_1(z,v)&= M_1(1,v)\cdot \exp\bigg(-\frac{(1-z)\log(1-z)-(1-z)}{i\lambda n^{-1}(\log n)^A}+\mathcal{O}\Big(\frac{|1-z|^2(\log(|1-z|)^2)}{n^{-2}(\log n)^{2A}}\Big)\\
&\qquad +\mathcal{O}\Big(|1-z|\log(|1-z|)\Big)\bigg),
\end{split}
\end{equation}
uniformly for $|z-1|\le n^{-1}(\log n)^{A-1}$, and $t\in\Delta$. Moreover, if  $|z-z_0(v)|\le n^{-1}(\log n)^{A-1}$, and $t\in\Delta$, then
\begin{equation}
\label{MISOweakLemma3P2}
M_1(z,v)= \Big(\frac{1}{1-\frac{z}{z_0(v)}}\Big)^{1+\frac1{\log n} + \mathcal{O}\Big(\frac{\log\log n}{(\log n)^2}\Big)}
e^{C-(A-1)\frac{\log\log n}{\log n} +\mathcal{O}\Big(\frac{1}{\log n}\Big)},
\end{equation}
for some constant $C$.
\end{lemma}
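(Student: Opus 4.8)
The plan is to exploit the explicit representation $M_1(z,v)=e^{\int_0^z f(t,v)\,dt}$ from Proposition~\ref{MISOExplizit} together with the location of the moving singularity $z_0(v)$ provided by Lemma~\ref{MISOweakLemma2}, following~\cite{Drmota2009}. Throughout set $v=v(n)=e^w$ with $w=i\lambda n^{-1}(\log n)^A$, so that $w\to 0$ with $|\arg w|=\tfrac\pi2$, which lies in the admissible range of Lemma~\ref{MISOweakLemma2}. From the stated expansion of $z_0(e^w)$ one reads off $1-z_0(v)=\tfrac{w}{\log(1/w)}\big(1+\mathcal{O}(\tfrac{\log\log(1/w)}{\log(1/w)})\big)$, hence $|1-z_0(v)|=|\lambda|\,n^{-1}(\log n)^{A-1}(1+o(1))$; thus the two discs appearing in the statement have radii of the same order as $|1-z_0(v)|$, and the fixed logarithmic branch point of $f$ at $z=1$ and the simple pole of $f$ at $z=z_0(v)$ are at mutual distance $\asymp n^{-1}(\log n)^{A-1}$. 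The whole difficulty lies in tracking the interplay of these two nearly coinciding singularities.

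First I would isolate the pole. The denominator $D(z,v):=(1-z)v\lo{z}+z(1-v)$ of $f$ satisfies $\dzp D(z,v)=1-v\lo{z}$, so using $D(z_0(v),v)=0$ one obtains
\[
\operatorname{Res}_{z=z_0(v)}f(z,v)=\frac{v\lo{z_0(v)}}{1-v\lo{z_0(v)}}=\frac{-z_0(v)(1-v)}{1-z_0(v)v}=-1-\frac1{\log n}+\mathcal{O}\Big(\frac{\log\log n}{(\log n)^2}\Big),
\]
the last equality following by substituting the expansions of $z_0(v)$ and of $1-v=1-e^w$ and using $\log(1/w)=\log(1/|w|)+\mathcal{O}(1)=\log n+\mathcal{O}(\log\log n)$. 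Writing $f(z,v)=\dfrac{\operatorname{Res}_{z=z_0(v)}f}{z-z_0(v)}+h(z,v)$, where $h(\cdot,v)$ is holomorphic apart from the logarithmic branch point at $z=1$ — here the uniqueness assertion of Lemma~\ref{MISOweakLemma2} is essential, guaranteeing that $f$ has no further pole in the relevant region — and integrating from $0$ to $z$ gives
\[
M_1(z,v)=\Big(1-\frac{z}{z_0(v)}\Big)^{\operatorname{Res}_{z=z_0(v)}f}\exp\Big(\int_0^z h(t,v)\,dt\Big)=\Big(\frac1{1-z/z_0(v)}\Big)^{1+\frac1{\log n}+\mathcal{O}\big(\frac{\log\log n}{(\log n)^2}\big)}\exp\Big(\int_0^z h(t,v)\,dt\Big).
\]
It then remains to prove the uniform estimate $\int_0^z h(t,v)\,dt=C-(A-1)\tfrac{\log\log n}{\log n}+\mathcal{O}(\tfrac1{\log n})$ for $z$ in the disc of radius $n^{-1}(\log n)^{A-1}$ about $z_0(v)$, which yields~\eqref{MISOweakLemma3P2}. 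The value $M_1(1,v)$ I would get by estimating $\int_0^1 f(t,v)\,dt$ directly: splitting the integration range at the point $1-t\asymp|1-z_0(v)|$, where the two summands of $D(t,v)$ balance, the part with larger $1-t$ contributes $\log\tfrac1{|1-z_0(v)|}=\log n-(A-1)\log\log n+\mathcal{O}(1)$ while the part with smaller $1-t$ contributes $\mathcal{O}(1)$, so that $M_1(1,v)=e^{\log n-(A-1)\log\log n+\mathcal{O}(1)}$ (the $\mathcal{O}(1)$ being complex, as $w$ is purely imaginary).

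Second, for~\eqref{MISOweakLemma3P1} I would argue directly near $z=1$: write $M_1(z,v)=M_1(1,v)\exp\big(-\int_z^1 f(t,v)\,dt\big)$ and substitute $u=1-t$, so that $-\int_z^1 f(t,v)\,dt=-\int_0^{1-z}f(1-u,v)\,du$. Expanding the denominator as $D(1-u,v)=uv\lo{1-u}+(1-u)(1-e^w)=-w+uv\log\tfrac1u+\mathcal{O}(uw)+\mathcal{O}(w^2)$ gives, for $|u|\le|1-z|\le n^{-1}(\log n)^{A-1}$,
\[
f(1-u,v)=-\frac{v\log\tfrac1u}{w}\Big(1+\frac{uv\log\tfrac1u}{w}+\mathcal{O}\Big(\frac{u^2(\log\tfrac1u)^2}{w^2}\Big)+\mathcal{O}(u)+\mathcal{O}(w)\Big).
\]
Integrating term by term, using $\int_0^x\log\tfrac1u\,du=x\log\tfrac1x+x$ and $\int_0^x u(\log\tfrac1u)^2\,du=\mathcal{O}(x^2(\log\tfrac1x)^2)$, inserting $v=1+\mathcal{O}(w)$, and recalling $w=i\lambda n^{-1}(\log n)^A$, the leading term of $-\int_0^{1-z}f(1-u,v)\,du$ becomes $-\dfrac{(1-z)\log(1-z)-(1-z)}{i\lambda n^{-1}(\log n)^A}$ (using $(1-z)\log\tfrac1{1-z}=-(1-z)\log(1-z)$), the first correction is $\mathcal{O}\big(\tfrac{|1-z|^2(\log|1-z|)^2}{n^{-2}(\log n)^{2A}}\big)$, and the remaining contributions, stemming from $v=1+\mathcal{O}(w)$ and from the $\mathcal{O}(uw)$ term of $D$, combine into $\mathcal{O}(|1-z|\log|1-z|)$; this is precisely~\eqref{MISOweakLemma3P1}.

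The main obstacle is the bookkeeping in the two estimates above, not a single hard idea. The delicate point — both for $M_1(1,v)$ and, above all, for the uniform bound on $\int_0^z h(t,v)\,dt$ needed in~\eqref{MISOweakLemma3P2} — is that the narrow region separating the branch point $z=1$ from the pole $z_0(v)$ has width only $\asymp n^{-1}(\log n)^{A-1}$, i.e.\ of the same order as the radius of the disc about $z_0(v)$, so that this disc may even contain $z=1$ and $h$ is itself (logarithmically) singular inside it; making the estimate uniform there is exactly where one must use the precise asymptotics and the uniqueness statement of Lemma~\ref{MISOweakLemma2}, together with a Hankel-type deformation of the integration path, splitting it according to whether $uv\log\tfrac1u$ or $-w$ dominates the denominator $D(1-u,v)$. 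A convenient consistency check at the end is that~\eqref{MISOweakLemma3P1} and~\eqref{MISOweakLemma3P2} agree on the overlap $|z-1|\asymp|z-z_0(v)|\asymp n^{-1}(\log n)^{A-1}$.
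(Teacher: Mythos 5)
The paper does not prove this lemma at all: it is introduced with the sentence ``We state the following important result of~\cite{Drmota2009}'' and then used as a black box, so there is no in-paper proof to compare your attempt against. What you are doing is reconstructing (a sketch of) the argument from \cite{Drmota2009}, which is the right reference and the right general strategy: exploit $M_1(z,v)=\exp\big(\int_0^z f(t,v)\,dt\big)$, locate the moving singularity $z_0(v)$ via Lemma~\ref{MISOweakLemma2}, split off the simple pole of $f$ at $z_0(v)$, and handle the logarithmic branch point at $z=1$ separately. Your residue computation $\operatorname{Res}_{z=z_0(v)}f=\frac{v\lo{z_0(v)}}{1-v\lo{z_0(v)}}=\frac{-z_0(v)(1-v)}{1-z_0(v)v}=-1-\frac1{\log n}+\mathcal{O}\big(\frac{\log\log n}{(\log n)^2}\big)$ is correct (using $\dzp D=1-v\lo{z}$, the relation $D(z_0,v)=0$, and $\log(1/w)=\log n+\mathcal{O}(\log\log n)$), and your term-by-term expansion near $z=1$ does reproduce~\eqref{MISOweakLemma3P1} with the stated error orders, using $|1-z|/|w|\le 1/\log n$ on the relevant disc.

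However, as a proof of the lemma your write-up has a genuine gap, and it is the one you flag yourself: you reduce~\eqref{MISOweakLemma3P2} to the claim that $\int_0^z h(t,v)\,dt = C-(A-1)\frac{\log\log n}{\log n}+\mathcal{O}\big(\frac1{\log n}\big)$, uniformly on a disc of radius $n^{-1}(\log n)^{A-1}$ about $z_0(v)$, but you do not establish this. This is precisely the delicate estimate: the disc about $z_0(v)$ has the same order of radius as $|1-z_0(v)|$ and may contain the branch point $z=1$, so $h$ is itself logarithmically singular there, and simply asserting that the $\mathcal{O}(1)$ constant from the $M_1(1,v)$ estimate globalizes uniformly on the whole disc, and moreover splits off a $-(A-1)\frac{\log\log n}{\log n}$ term at that precision, requires a careful contour deformation and a quantitative bound for the branch-point contribution. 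Your heuristic for $M_1(1,v)$ --- splitting the real integration segment at $1-t\asymp|1-z_0(v)|$ --- also needs justification, because on the inner part the denominator $D$ is of size $|w|$ but oscillating and nearly cancels near the foot of the pole; one must verify the integral of $f$ there really is $\mathcal{O}(1)$ rather than larger. In short: your plan is the correct one and the explicit parts are right, but the uniform control of $\int_0^z h$ near the colliding singularities --- the core difficulty of the lemma --- is stated, not proved, so the sketch does not yet constitute a proof.
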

Consequently, we directly obtain an expansion of $M_{\ell}(z,v)=v^{\ell-1}(\ell-1)! \big(M_1(z,v)\big)^{\ell}$. The next step is to use a Cauchy integral to extract coefficients of $M_\ell(z,v)$, 
and to obtain an asymptotic expansion of the probability generating function of $\rvR$. 
It is convenient to consider the shifted random variable $\hat{R}_{n,\ell}=\rvR-(\ell-1)$.
Following~\cite{Drmota2009}, we use a contour integral as depicted in~\eqref{MISOpic2}, surrounding the singularities $z=1$ and $z=z_0(s)$ 
with winding number one around the origin
\begin{figure}[!htb]
\centering
\includegraphics[scale=0.6]{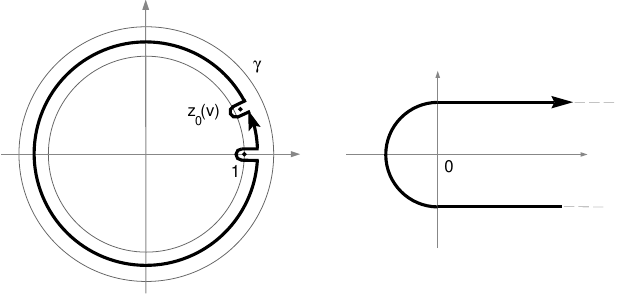}
\caption{Integration path $\gamma$; Hankel contour $\Hank$}
\label{MISOpic2}
\end{figure}

and extract coefficients according to the definition of $M_{\ell}(z,v)$ given in~\eqref{MISOgen1}:
\begin{equation}
\begin{split}
\label{MISOasympt1}
\E(v^{\hat{R}_{n,\ell}})&=\frac{1}{v^{\ell-1}\fallfak{(n-1)}{\ell-1}}[z^{n-\ell}]M_{\ell}(z,v)=
\frac{1}{v^{\ell-1}\fallfak{(n-1)}{\ell-1}2\pi i}\int_\gamma \frac{M_\ell(z,v)}{z^{n-\ell+1}}dz\\
&=\frac{(\ell-1)!}{\fallfak{(n-1)}{\ell-1}}\frac{1}{2\pi i}\int_\gamma \frac{ \big(M_1(z,v)\big)^{\ell}}{z^{n-\ell+1}}dz.
\end{split}
\end{equation}
Note that we assume that $v=v(n)=e^{i\lambda n^{-1}(\log n)^{2}}$ for $\lambda\in\R\setminus\{0\}$.
%such that the characteristic function of 
We obtain the following result.
\begin{lemma}
\label{MISOweakLemma4}
Assume that $v=v(n)=e^{i\lambda n^{-1}(\log n)^2}$ for $\lambda \in\R\setminus\{0\}$. Then
\[
\E(v^{\hat{R}_{n,\ell}})=z_0(v)^{-n}\Big(1+\mathcal{O}\Big(\frac
{1}{\log n}\Big)\Big),
\]
where $z_0(v)\neq 1$ is a zero of the function $\frac{1}{f(z,v)}$, with $f(z,v)$ given in~\eqref{MISOweak1}, 
and $z_0(v)$ satisfying the asymptotic expansion
\[
z_0(v)=1-\frac{i\lambda \log n}{n}-\frac{i\lambda \log\log n}{n}-\frac{i\lambda\log(i\lambda)}n + \mathcal{O}
\Big(\frac{(\log\log n)^2}{n\log n}\Big).
\]
\end{lemma}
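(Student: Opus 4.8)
The plan is to extract the asymptotics of $\E(v^{\rvhR})$ from the Cauchy integral representation~\eqref{MISOasympt1}, which by Proposition~\ref{MISOExplizit} involves only $[z^{n-\ell}]\big(M_1(z,v)\big)^{\ell}$, so that the analysis reduces to a variant of the base case $\ell=1$ treated by Drmota et al.~\cite{Drmota2009}. Concretely I would deform the integration path $\gamma$ into a contour built from a small Hankel loop $\Hank$ around the zero $z_0(v)$ of $1/f(z,v)$ together with a small loop around the (logarithmic) singularity $z=1$ of $M_1(z,v)$, joined by connecting arcs running at distance of order $n^{-1}\log n$ from both singular points, as in Figure~\ref{MISOpic2}; throughout, $v=v(n)=e^{i\lambda n^{-1}(\log n)^2}$ corresponds to $A=2$ in Lemma~\ref{MISOweakLemma3}. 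The only features not already present in the case $\ell=1$ are the $\ell$-th power of $M_1$ and the modified normalizing factor $\frac{(\ell-1)!}{\fallfak{(n-1)}{\ell-1}}$, which just introduce additional $\Gamma$- and power-of-$n$ factors that have to be seen to cancel.

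On the Hankel loop around $z_0(v)$ the local expansion~\eqref{MISOweakLemma3P2} gives, after raising to the $\ell$-th power,
\[
\big(M_1(z,v)\big)^{\ell}=\Big(\frac{1}{1-z/z_0(v)}\Big)^{\ell+\frac{\ell}{\log n}+\mathcal{O}\big(\frac{\log\log n}{(\log n)^2}\big)}\,e^{\ell C-\ell\frac{\log\log n}{\log n}+\mathcal{O}(\frac{1}{\log n})},
\]
an algebraic singularity with exponent tending to $\ell$ and a slowly varying prefactor. A singularity-analysis evaluation of the resulting Hankel integral, carried out exactly as in~\cite{Drmota2009}, then produces the dominant contribution
\[
[z^{n-\ell}]\big(M_1(z,v)\big)^{\ell}=z_0(v)^{-(n-\ell)}\,\frac{(n-\ell)^{\ell-1+\frac{\ell}{\log n}}}{\Gamma\big(\ell+\tfrac{\ell}{\log n}\big)}\,e^{\ell C}\,\Big(1+\mathcal{O}\big(\tfrac{1}{\log n}\big)\Big).
\]
The loop around $z=1$ contributes only $M_1(1,v)=e^{\log n-\log\log n+\mathcal{O}(1)}$ tempered by the weak singularity described by~\eqref{MISOweakLemma3P1}, which is of strictly smaller order relative to $z_0(v)^{-n}n^{\ell-1}$, while the connecting arcs are negligible since on them $|z|$ is bounded away from $\min(1,|z_0(v)|)$; I would establish both estimates by the trivial-bound and sector-of-analyticity arguments of~\cite{Drmota2009}.

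Plugging the dominant term into~\eqref{MISOasympt1} and using $\fallfak{(n-1)}{\ell-1}=(n-\ell)^{\ell-1}\big(1+\mathcal{O}(\tfrac1n)\big)$, $\Gamma\big(\ell+\tfrac{\ell}{\log n}\big)=(\ell-1)!\,\big(1+\mathcal{O}(\tfrac{1}{\log n})\big)$, $(n-\ell)^{\ell/\log n}=n^{\ell/\log n}\big(1+\mathcal{O}(\tfrac{1}{n\log n})\big)=e^{\ell}\big(1+\mathcal{O}(\tfrac{1}{n\log n})\big)$ and $z_0(v)^{-(n-\ell)}=z_0(v)^{-n}\big(1+\mathcal{O}(\tfrac{\log n}{n})\big)$, all the power-of-$n$, $\Gamma$- and constant factors collapse to $e^{\ell(1+C)}$; comparing with the case $\ell=1$, which is the result of~\cite{Drmota2009}, namely $\E(v^{\hat{R}_{n,1}})=z_0(v)^{-n}\big(1+\mathcal{O}(\tfrac1{\log n})\big)$, forces the absolute constant $e^{1+C}$ to equal $1$, hence $e^{\ell(1+C)}=1$ for all $\ell$. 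This yields $\E(v^{\rvhR})=z_0(v)^{-n}\big(1+\mathcal{O}(\tfrac1{\log n})\big)$, as claimed. The stated asymptotic expansion of $z_0(v)$ then follows by substituting $w=i\lambda n^{-1}(\log n)^2$ into the expansion of Lemma~\ref{MISOweakLemma2}: here $\log\tfrac1w=\log n-2\log\log n-\log(i\lambda)$ and $\log\log\tfrac1w=\log\log n+\mathcal{O}\big(\tfrac{\log\log n}{\log n}\big)$, so the first summand $-\tfrac{w}{\log(1/w)}$ contributes $-\tfrac{i\lambda\log n}{n}-\tfrac{i\lambda\log(i\lambda)}{n}-\tfrac{2i\lambda\log\log n}{n}$ and the second summand $\tfrac{w\log\log(1/w)}{\log^2(1/w)}$ contributes $+\tfrac{i\lambda\log\log n}{n}$; the $\log\log n$-terms partly cancel and one is left with
\[
z_0(v)=1-\frac{i\lambda\log n}{n}-\frac{i\lambda\log\log n}{n}-\frac{i\lambda\log(i\lambda)}{n}+\mathcal{O}\Big(\frac{(\log\log n)^2}{n\log n}\Big).
\]

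The main obstacle is not any single clean step but the uniform control of all error terms throughout the contour deformation and the Hankel evaluation --- in particular, verifying that the contribution near $z=1$ and along the connecting arcs is genuinely of smaller order than $z_0(v)^{-n}n^{\ell-1}$, uniformly over the whole range $v=e^{i\lambda n^{-1}(\log n)^2}$ with $\lambda\in\R\setminus\{0\}$. This is exactly the delicate analytic core supplied by the machinery of~\cite{Drmota2009}, here only mildly complicated by the $\ell$-th power of $M_1$ and by having to carry along the normalizing factor $\frac{(\ell-1)!}{\fallfak{(n-1)}{\ell-1}}$.
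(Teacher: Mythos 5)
Your proposal is correct and follows essentially the same approach as the paper: deform to a contour with Hankel loops at $z=1$ and $z=z_0(v)$ joined by arcs on a circle of radius $1+\tfrac{\log n}{n}$, apply the local expansions of Lemma~\ref{MISOweakLemma3} with $A=2$ to the $\ell$-th power of $M_1$, and extract the dominant term from the loop around $z_0(v)$ while bounding the other three contributions by $\Gro(1/\log^\ell n)$ and $\Gro(1/n^\ell)$. Your bootstrap against the known $\ell=1$ result to force $e^{\ell(1+C)}=1$ is a clean substitute for the paper's direct appeal to $C=-1$ via the continuity argument of Drmota et al., and your substitution of $w=i\lambda n^{-1}(\log n)^2$ into Lemma~\ref{MISOweakLemma2} to obtain the expansion of $z_0(v)$ reproduces the paper's computation.
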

\begin{proof}
We note first that the expansion of $z_0(v)$ follows rather quickly from Lemma~\ref{MISOweakLemma2} with the choice $v=e^{w}=e^{i\lambda n^{-1}(\log n)^2}$.
Next we turn to the curve $\gamma$. It consists of four parts, $$\gamma=\gamma_1\cup \gamma_{2}\cup \gamma_{3}\cup\gamma_{4}.$$
Two so-called Hankel-contours $\gamma_{2},\gamma_{4}$, surrounding the singularities $z=1$ and $z=z_0(s)$, and the remaining paths $\gamma_1$ and $\gamma_3$ stem from a circle of radius $r>1$;
in particular we use $r=1+\frac{\log n}{n}$. Let $\Hank$ denote the major part of a Hankel contour, consisting of a half circle of radius $1$ and two lines of length $\log n$:
\[
\Hank=\{z\in\C:\,|z|=1, \Re(z)\le 0 \} \cup \{ z\in\C :\, 0\le \Re(z)\le \log n,\quad \Im(z)=\pm 1\}.
\]
We consider first the integral $I_1$ around the singularity $z=1$, 
using the substitution $z=1+\frac{u}{n}$, with $u\in \Hank$.
We have
\[
I_1=\frac{(\ell-1)!}{\fallfak{(n-1)}{\ell-1}}\frac{1}{2\pi i}\int_{\gamma_{2}} \frac{ \big(M_1(z,v)\big)^{\ell}}{z^{n-\ell+1}}dz
%=\frac{(\ell-1)!}{\fallfak{(n-1)}{\ell-1}}\frac{1}{2\pi i}\int_{\Hank} \frac{ \Big(M_1\big(1+\frac{u}{n},v\big)\Big)^{\ell}}{(1+\frac{u}{n})^{n-\ell+1}}\frac{du}n
=\frac{(\ell-1)!}{\fallfak{n}{\ell}}\frac{1}{2\pi i}\int_{\Hank} \frac{ \Big(M_1\big(1+\frac{u}{n},v\big)\Big)^{\ell}}{(1+\frac{u}{n})^{n-\ell+1}}du.
\]
By Lemma~\eqref{MISOweakLemma3} equation~\ref{MISOweakLemma3P1} we obtain that
\[
\Big|M_1(1+\frac{u}{n},v)\Big|=\Gro\Big(\frac{n}{\log n}\Big),
\]
for $u\in\Hank$, such that 
\[
\frac{(\ell-1)!}{\fallfak{n}{\ell}}\Big|M_1(1+\frac{u}{n},v)\Big|^\ell=\Gro\Big(\frac{1}{\log^{\ell} n}\Big).
\]
Moreover, we get
\[
(1+\frac{u}{n})^{-n+\ell-1}=e^{-u}\Big(1+\Gro\big(\frac{|u|^2}{n}\big)\Big).
\]
Consequently, decomposing the Hankel counter into the half circle and the two rays of length $\log n$ we get
\[
I_1=\Gro\bigg(\frac{1}{\log^{\ell} n} + \frac{1}{\log^{\ell} n} \int_{0}^{\log n}e^{-u}du\bigg)=\Gro\bigg(\frac{1}{\log^{\ell} n}\bigg).
\]

\smallskip

Next we consider the main contribution - the integral around the singularity $z=z_0(v)$.
We use the substitution $z=z_0(v)(1+\frac{u}{n})$, with $u\in \Hank$.
%strictly speaking one need to shorten the length of the rays in order to make sure that the circle of radius 
%$r=1+\frac{\log n}{n}$ still connects, since $|z_0(v)|$ may be greater than 1.
% the error is asymptotically negligible.
\[
I_2=\frac{(\ell-1)!}{\fallfak{(n-1)}{\ell-1}}\frac{1}{2\pi i}\int_{\gamma_{2}} \frac{ \big(M_1(z,v)\big)^{\ell}}{z^{n-\ell+1}}dz
=\frac{(\ell-1)!z_0(v)^{-n+\ell}}{\fallfak{n}{\ell}}\frac{1}{2\pi i}\int_{\Hank} \frac{ \Big(M_1\big(z_0(v)(1+\frac{u}{n}),v\big)\Big)^{\ell}}{(1+\frac{u}{n})^{n-\ell+1}}du.
\]
By Lemma~\ref{MISOweakLemma3}, equation~\eqref{MISOweakLemma3P2}, and using continuity arguments implying that $C=-1$ (compare with~\cite{Drmota2009}), it can be shown that for $u\in\Hank$ 
\begin{equation*}
\begin{split}
M_1(1+\frac{u}{n},v)&=n^{1+\frac{1}{\log n}+\Gro(\frac{\log\log n}{(\log n)^2})} (-u)^{-1-\frac{1}{\log n} +\Gro(\frac{\log\log n}{(\log n)^2})}e^{-1+\Gro(\frac{\log\log n}{\log n})}\\
&=n^{1+\Gro(\frac{\log\log n}{(\log n)^2})} (-u)^{-1-\frac{1}{\log n} +\Gro(\frac{\log\log n}{(\log n)^2})}e^{\Gro(\frac{\log\log n}{\log n})}.
\end{split}
\end{equation*}
Consequently, we obtain
\begin{equation*}
\begin{split}
I_2&=\frac{n^{\ell}(\ell-1)!z_0(v)^{-n+\ell}}{\fallfak{n}{\ell}}\frac{1}{2\pi i}\int_{\Hank} \frac{ \Big(M_1\big(z_0(v)(1+\frac{u}{n}),v\big)\Big)^{\ell}}{(1+\frac{u}{n})^{n-\ell+1}}du\\
&=\frac{n^{\ell}(\ell-1)!z_0(v)^{-n+\ell}}{\fallfak{n}{\ell}}\frac{1}{2\pi i}\int_{\Hank} (-u)^{-\ell-\frac{\ell}{\log n} }e^{-u}du\Big(1+\Gro(\frac{1}{\log n})\Big).
\end{split}
\end{equation*}
Using the contour integral representation by Hankel of the reciprocal of the gamma function 
\[
\frac{1}{2\pi i}\int_\Hank (-u)^{-s}e^{-u}=\frac{1}{\Gamma(s)} + \Gro\Big(\frac{1}{n(\log n)^{\Re(s)}}\Big),
\]
we get
\[
I_2=\frac{n^{\ell}(\ell-1)!z_0(v)^{-n+\ell}}{\Gamma(\ell+\frac{\ell}{\log n})\fallfak{n}{\ell}}\Big(1+\Gro(\frac{1}{\log n})\Big).
\]
Expansion of $\frac{n^{\ell}}{\fallfak{n}{\ell}}$, $z_0(v)^{\ell}$ and $\Gamma(\ell+\frac{\ell}{\log n})$ leads to
\[
I_2=z_0(v)^{-n}\Big(1+\Gro(\frac{1}{\log n})\Big).
\]

Next we consider the integral for $|z-1|\ge \epsilon$. Since $M_1(z,v)$ and thus also $M_\ell(z,v)$ has no singularities except $z=1$ and $z=z_0(v)$ the function is 
uniformly bounded for $|z|=R$ and $|z-1|\ge \epsilon$. Consequently, the integral $I_3$ satisfies
\[
I_3=\frac{(\ell-1)!}{\fallfak{(n-1)}{\ell-1}}\frac{1}{2\pi i}\int_{|z|=R, |z-1|\ge \epsilon} \frac{ \big(M_1(z,v)\big)^{\ell}}{z^{n-\ell+1}}dz
= \Gro\Big(\frac{1}{R^{n-\ell+1}n^{\ell-1}}\Big)
= \Gro\Big(\frac{1}{n^{\ell}}\Big).
\]

It is known~\cite{Drmota2009} that in the remaining case $|z=R|$, $|z-1|<\epsilon$, it holds
$$M_1(z,v)=\Gro\Big(\frac{n}{\log n}\Big).$$ 
Hence, 
\[
I_4=\frac{(\ell-1)!}{\fallfak{(n-1)}{\ell-1}}\frac{1}{2\pi i}\int_{|z|=R, |z-1|< \epsilon} \frac{ \big(M_1(z,v)\big)^{\ell}}{z^{n-\ell+1}}dz
= \Gro\Big(\frac{n^\ell}{\log^\ell( n) \, n^{\ell-1} R^{n-\ell+1}}\Big)
= \Gro\Big(\frac{1}{\log^\ell(n)} \Big).
\]
\end{proof}

\bigskip

In order to obtain the weak limit from Lemma~\ref{MISOweakLemma4} we consider the 
shifted and normalized random variable 
\[
R^{\ast}_{n,\ell}=\frac{\rvR-(\ell-1)-\frac{n}{\log n} - \frac{n\log\log n}{(\log n)	^2}}{\frac{n}{(\log n)^2}}=\frac{\rvhR-a_n}{b_n},
%=\frac{\hat{R}_{n,\ell}-\frac{n}{\log n} - \frac{n\log\log n}{(\log n)	^2}}{\frac{n}{(\log n)^2}}+\frac{\ell-1}{\frac{n}{(\log n)^2}},
\]
with 
\[
a_n=\frac{n}{\log n} + \frac{n\log\log n}{(\log n)	^2},\qquad b_n=\frac{n}{(\log n)^2}.
\]
The characteristic function of $R^*_{n,\ell}$
is given by
\[
\E(e^{i\lambda R^*_{n,\ell}})=\E(e^{i\lambda (\rvhR-a_n)/b_n})= \E(e^{i\lambda \rvhR/b_n})e^{-i\lambda a_n/b_n}.
\]
Note that
\[
e^{-i\lambda a_n/b_n}= e^{-i\lambda\log n-i\lambda\log\log n}.
\]
By Lemma~\ref{MISOweakLemma4} we obtain for the characteristic function - the probability generating function of $\rvhR$ with $v=v(n)=e^{i\lambda n^{-1}(\log n)^2}= e^{i\lambda/ b_n}$ - the result
\begin{equation}
\begin{split}
\E(e^{i\lambda \rvhR /b_n})&=\E\big((v(n))^{\rvhR}\big)
= e^{-n \log z_0(v)}\Big(1+\mathcal{O}\Big(\frac1{\log n}\Big)\Big)\\
&= e^{i\lambda \log n+ i\lambda \log\log n+i\lambda\log(i\lambda)} + \mathcal{O}\Big(\frac{(\log\log n)^2}{\log n}\Big).
\end{split}
\end{equation}
Consequently, 
\[
\E(e^{i\lambda R^*_{n,\ell}})= e^{i\lambda\log(i\lambda)} + \mathcal{O}\Big(\frac{(\log\log n)^2}{\log n}\Big).
\]
Finally, noting that
\[
i\lambda\log(i\lambda)=i\lambda( \log |\lambda| + i\sgn(\lambda)\frac{\pi}2)= i\lambda\log |\lambda|-\frac{\pi}2\sgn(\lambda)\lambda
=i\lambda\log |\lambda|-\frac{\pi}2|\lambda|,
\]
proves that 
\[
\E(e^{i\lambda R^*_{n,\ell}})= e^{i\lambda\log |\lambda|-\frac{\pi}2|\lambda|} + \mathcal{O}\Big(\frac{(\log\log n)^2}{\log n}\Big).
\]
This implies that the characteristic function of $R^*_{n,\ell}$ converges to the characteristic function of a stable random variable with characteristic quadruple 
$(0,\frac{\pi}2,1,-1)$.

\section{Isolating the nodes \texorpdfstring{$n+1-l,\dots,n$}{n+1-l,...n}\label{MISOSecEnnPlusEinsMinusEll}}

\subsection{Generating functions description}
In order to study the r.v.\ $\rvL$ satisfying the distributional recurrence \eqref{MISOdistributionalL1}, we introduce for $\ell \ge 1$ the generating functions
\begin{equation}
\label{MISOgen2}
   N_{\ell}(z,v) := \sum_{n \ge \ell} (n-1)^{\underline{\ell-2}} \, \E(v^{\rvL})z^{n+1-\ell};
\end{equation}
note that in the special case $\ell=1$ one has $\fallfak{(n-1)}{-1} = \frac{1}{n}$. The
starting point of our considerations is the following Proposition.

%, where we use the function $g(z,v)$, as defined in~\eqref{MISOweak1}:
%\begin{equation*}
%\quad g(z,v)=\frac{1}{(1-z)v \lo{z} + z(1-v)}.
%\end{equation*}

\begin{prop}
The generating functions $N_{\ell}(z,v) $ satisfy for $\ell \ge 1$ the following second order differential equations:
\begin{equation}
    \label{MISOode6}
   \dzpZwo N_\ell(z,v) +(\ell-1)g(z,v) \dzp N_{\ell}(z,v) %
        - \frac{vg(z,v)}{1-z} N_\ell(z,v)%
   = g(z,v)\cdot b_{\ell}(z,v),
\end{equation}
with $g(z,v)$ as defined in \eqref{MISOweak1} and where $b_{\ell}(z,v)$ is given by
\begin{equation}\label{MISObellzvDef}
    b_{\ell}(z,v) = v \sum_{r=1}^{\ell-1}\binom{\ell}{r}  N_r(z,v) \dzpZwo N_{\ell-r}(z,v)
        + \sum_{r=1}^{\ell-1}\binom{\ell}{r-1}(\ell-r-1)! v^{\ell-r} \dzp N_r(z,v).
\end{equation}
\end{prop}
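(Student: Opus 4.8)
The plan is to follow the derivation of~\eqref{MISOode1} in the proof of Proposition~\ref{MISOpropMellzv} almost verbatim; the two new features are that the generating function~\eqref{MISOgen2} carries the weight $\fallfak{(n-1)}{\ell-2}$ --- two lower than the weight $\fallfak{(n-1)}{\ell-1}$ used for $M_\ell$ --- and that the splitting probabilities~\eqref{MISOdistributionalL2} contain falling factorials with negative lower index (read through $\fallfak{(j-1)}{-p}=(\auffak{j}{p})^{-1}$) together with the extra Kronecker term. First I would convert the distributional recurrence~\eqref{MISOdistributionalL1} into the recurrence for the probability generating functions, exactly as~\eqref{MISOrec1} was obtained for $\rvR$: with $\hat I_n,\hat J_\ell$ as introduced after~\eqref{MISOdistributionalL1},
\[
\E(v^{\rvL})=v\sum_{r=1}^{\ell}\sum_{k}\P\{\hat I_n=k,\hat J_\ell=r\}\,\E(v^{L^{(1)}_{k,r}})\,\E(v^{L^{(2)}_{n-k,\ell-r}}),\qquad n\ge 2,
\]
with the conventions $\E(v^{L_{1,1}})=1$, $\E(v^{L_{m,0}})=1$, $\E(v^{L_{m,m}})=v^{m-1}$ and the joint law of $\hat I_n,\hat J_\ell$ given by~\eqref{MISOdistributionalL2}. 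I would then multiply this recurrence by $(n-1)\fallfak{(n-1)}{\ell-1}\,z^{n+1-\ell}$ and sum over $n\ge\ell$: this clears the denominator $(n-1)\fallfak{(n-1)}{\ell-1}$ of~\eqref{MISOdistributionalL2}, and since $(n-1)\fallfak{(n-1)}{\ell-1}=(n-1)(n-\ell+1)\fallfak{(n-1)}{\ell-2}$, the residual quadratic factor $(n-1)(n-\ell+1)$ acts on the monomials $\fallfak{(n-1)}{\ell-2}z^{n+1-\ell}$ defining $N_\ell(z,v)$ as a \emph{second}-order differential operator --- which is exactly why~\eqref{MISOode6} is of second rather than first order. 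Writing $m=n+1-\ell$ one has $(n-1)(n-\ell+1)=(m+\ell-2)m$, corresponding to $(z\dzp)^2+(\ell-2)(z\dzp)=z^2\dzpZwo+(\ell-1)z\dzp$, so the left-hand side becomes $z^2\dzpZwo N_\ell(z,v)+(\ell-1)z\dzp N_\ell(z,v)$.

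For the right-hand side I would treat the three natural pieces of~\eqref{MISOdistributionalL2} separately. For $1\le r\le\ell-1$ the convolutive part, after cancelling $(n-1)\fallfak{(n-1)}{\ell-1}$ and writing $z^{n+1-\ell}=z^{k+1-r}z^{(n-k)+1-(\ell-r)}z^{-1}$, turns into a combination of terms $z\,N_r\,\dzpZwo N_{\ell-r}$ and $z\,\dzpZwo N_r\cdot N_{\ell-r}$ --- the second derivative appearing because $\fallfak{(j-1)}{p}=(j-p+1)(j-p)\fallfak{(j-1)}{p-2}$ while $z^2\dzpZwo$ reproduces the factor $(j-p+1)(j-p)$ on $z^{j+1-p}$, the negative-index convention making the instances that involve $N_1$ fit the same pattern; re-indexing one of the sums by $r\mapsto\ell-r$ and using Pascal's rule $\binom{\ell-1}{r-1}+\binom{\ell-1}{r}=\binom{\ell}{r}$ collapses this to $vz\sum_{r=1}^{\ell-1}\binom{\ell}{r}N_r\,\dzpZwo N_{\ell-r}$. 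The Kronecker term pins the subtree not containing node $n+1-\ell$ to size exactly $\ell-r$, a fully selected recursive tree with $\E(v^{L_{\ell-r,\ell-r}})=v^{\ell-r-1}$; after cancellation its weight is $\fallfak{(k-1)}{r-1}=(k-r+1)\fallfak{(k-1)}{r-2}$, and $(k-r+1)z^{k+1-r}=z\dzp z^{k+1-r}$ converts it to $z\sum_{r=1}^{\ell-1}\binom{\ell}{r-1}(\ell-r-1)!\,v^{\ell-r}\dzp N_r$. Finally the $r=\ell$ term --- where the other subtree carries no selected node, so $\E(v^{L_{\cdot,0}})=1$ --- contributes $v\,\frac{z}{1-z}N_\ell(z,v)$ from the piece with $\fallfak{(n-k-1)}{0}=1$, and $v\,z^2(1-\frac{1-z}{z}\lo z)\dzpZwo N_\ell(z,v)$ from the piece with $\fallfak{(n-k-1)}{-2}=\frac{1}{(n-k)(n-k+1)}$, using $\sum_{m\ge1}\frac{z^m}{m(m+1)}=1-\frac{1-z}{z}\lo z$; this is where the logarithm $\lo z$ enters.

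Collecting all contributions, the summed recurrence reads
\[
z^2\dzpZwo N_\ell+(\ell-1)z\dzp N_\ell=z\,b_\ell(z,v)+v\,\frac{z}{1-z}\,N_\ell+v\,z^2\Big(1-\frac{1-z}{z}\lo z\Big)\dzpZwo N_\ell,
\]
with $b_\ell$ as in~\eqref{MISObellzvDef}. Moving the last two terms to the left, the coefficient of $\dzpZwo N_\ell$ becomes $z[z(1-v)+v(1-z)\lo z]=z/g(z,v)$ with $g$ from~\eqref{MISOweak1}, so dividing by $z$ and multiplying by $g(z,v)$ yields~\eqref{MISOode6}.

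The part I expect to be routine but delicate --- and which, as in the proof of Proposition~\ref{MISOpropMellzv}, I would carry out but not display in full --- is the bookkeeping in the second paragraph: matching each falling factorial (including the negative-index instances) to the correct power of $z^2\dzpZwo$, performing the $r\mapsto\ell-r$ re-indexing together with the binomial manipulations, correctly isolating the degenerate $r=\ell$ contribution (which alone produces both the $\lo z$ inside $g$ and the $\frac{vg}{1-z}$ term on the left of~\eqref{MISOode6}), and checking consistency of the summation ranges at the boundaries $k=1$ and $k=n-1$ as well as in the small cases $\ell=1$ (where~\eqref{MISOode6} degenerates to $\dzpZwo N_1-\frac{vg}{1-z}N_1=0$) and $\ell=2$.
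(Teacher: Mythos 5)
Your derivation is correct and follows the same route as the paper: translating~\eqref{MISOrec2} into a second-order ODE by multiplying with $(n-1)\fallfak{(n-1)}{\ell-1}z^{n+1-\ell}$, summing over $n$, and then dividing out the coefficient $z(1-v)+v(1-z)\lo{z}$ --- you even supply the bookkeeping (the identification of $(n-1)(n-\ell+1)$ with $z^2\dzpZwo+(\ell-1)z\dzp$, the role of the negative-index falling factorials, the Kronecker term producing the $\dzp N_r$ sum, the Pascal-rule re-indexing, and the $r=\ell$ contribution generating both the $\frac{v}{1-z}N_\ell$ term and the $\lo{z}$) that the paper explicitly omits as "straightforward computations." The one small thing to double-check is that the Kronecker term with $r=\ell$ is vacuous (since $\delta_{k,n}$ is never met for $k\le n-1$), so the second sum in $b_\ell$ indeed runs only up to $r=\ell-1$.
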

\begin{remark}
In contrast to the previous section studying $M_{\ell}(z,v)$, so far we are not able to derive a closed form expression for $N_\ell(z,v)$, not even in the simplest case $\ell=1$. 
\end{remark}
\begin{proof}
We obtain from the distributional equation~\eqref{MISOdistributionalL1} the following recurrence for the probability generating function $\E(v^{\rvL})$:
\begin{equation}
\label{MISOrec2}
\begin{split}
     \E(v^{\rvL}) =& v\sum_{r=1}^{\ell} \sum_{k=r}^{n-1}\P\{\hat{I}_n=k,\hat{J}_\ell=r\}
     \E(v^{L_{k, r}})\E(v^{L_{n-k, \ell-r}}), \quad \text{for $1 \le \ell \le n$ and $n \ge 2$},
     \end{split}
\end{equation}
with initial value $\E(L_{1, 1})= 1$. 
When translating the recurrence relation~\eqref{MISOrec2} into a differential equation,
for $\ell\geq 4$ we always have to distinguish between the four cases
$r=1$, $1< r < \ell-1$, $r=\ell-1$ and $r=\ell$.
Moreover, note that  $\fallfak{(n-1)}{\ell-1}/\fallfak{(n-1)}{\ell-r} =
\fallfak{(n+r-\ell-1)}{r-1}$. Multiplying \eqref{MISOrec2} with $(n-1)\fallfak{(n-1)}{\ell-1}z^{n-l}$ and taking summation over all $n\geq \ell$
leads then to a second order differential equation for
$N_{\ell}(z,v)$, where the functions $N_{r}(z,v)$ with $r < \ell$ are
appearing in the inhomogeneous part. Again we do not carry out these straightforward computations, which eventually give
\begin{equation}\label{MISONellzvODE}
  \big((1-z)v \log\big(\frac{1}{1-z}\big)+z(1-v)\big) \dzpZwo N_{\ell}(z,v) + (\ell-1) \dzp N_{\ell}(z,v) - \frac{v}{1-z} N_{\ell}(z,v) = b_{\ell}(z,v),
\end{equation}
with
\begin{align*}
  b_{\ell}(z,v) & = v \sum_{r=1}^{\ell-1}\binom{\ell-1}{r-1}%
    \Big( N_r(z,v) \dzpZwo N_{\ell-r}(z,v) +  N_{\ell-r}(z,v)\dzpZwo N_r(z,v) \Big)\\
    & \quad \mbox{} + \sum_{r=1}^{\ell-1}\binom{\ell}{r-1}(\ell-r-1)!  v^{\ell-r} \dzp N_r(z,v)\\
    & = v \sum_{r=1}^{\ell-1}\binom{\ell}{r}  N_r(z,v) \dzpZwo N_{\ell-r}(z,v) + \sum_{r=1}^{\ell-1}\binom{\ell}{r-1}(\ell-r-1)! v^{\ell-r} \dzp N_r(z,v),
\end{align*}
and thus show the stated result.
\end{proof}

\subsection{Asymptotics of the moments}
In order to prove the limit law for $\rvL$ we will use the so-called method of moments, i.e., we will show that the $s$-th positive integer moments of $\rvL$ converge, after suitable normalization, to the corresponding moments of a beta-distributed r.v.\ and apply the Fr\'echet-Shohat moment convergence theorem~~\cite{Loe1977}. Together with the fact that the Beta-distribution is uniquely determined by its $s$-th integer moments, this will imply Theorem~\ref{MISOthe2}.

In order to get the moments of $\rvL$ we introduce, for $\ell \ge 1$ and $s \ge 0$, the functions
\begin{equation}\label{MISONellzvDef1}
  N_{\ell,s}(z) := E_v D_v^s N_\ell(z,v) = \sum_{n \ge \ell} (n-1)^{\underline{\ell-2}} \, \E\big(\rvL^{\underline{s}}\big) z^{n+1-\ell},
\end{equation}
with $N_{\ell}(z,v)$ defined in \eqref{MISOgen2}. Then we can determine the $s$-th factorial moments of $\rvL$ simply via
\begin{equation}\label{MISONellzvDef2}
  \E(\fallfak{\rvL}s) = \frac1{(n-1)^{\underline{\ell-2}}}[z^{n+1-\ell}]N_{\ell,s}(z).
\end{equation}

%We will proceed similar to the case of isolating nodes $1,\dots,\ell$. We proof asymptotic expansions for the factorial moments 
%of $\rvL$, which allow us to deduce the asymptotics of the ordinary power moments. 
%We will use the relation
%\begin{equation}
%\E(\fallfak{\rvL}s)=\frac1{(n-1)^{\underline{\ell-2}}}[z^{n+1-\ell}]E_vD_v^sN_\ell(z,v)=\frac1{(n-1)^{\underline{\ell-2}}}[z^{n+1-\ell}]N_{\ell,s}(z);
%\end{equation}
%we will use the abbreviation
%$N_{\ell,s}(z) = E_v D_v^s N_\ell(z,v)$. Further we have $N_{1,0}(z)=\lo{z}$, $N_{\ell,0}(z) = N_\ell(z,1) =  \frac{(\ell-2)!}{(1-z)^{\ell-1}} -
%(\ell-2)!$ for $\ell\geq 2$, $\dzp N_{\ell,0}(z) = M_{\ell,0}(z) =
%\frac{(\ell-1)!}{(1-z)^{\ell}}$ and $ \frac{\partial^2}{\partial
%z^2}N_{\ell,0}(z) = \dzp M_{\ell,0}(z) = \frac{\ell!}{(1-z)^{\ell+1}}$.

To deduce the asymptotic behaviour of the coefficients of the functions $N_{\ell,s}(z)$ (and thus of the factorial moments $\E(\fallfak{\rvL}s)$) we will determine the local behaviour of $N_{\ell,s}(z)$ around their unique dominant singularity $z=1$ (as we shall see later on) and apply singularity analysis~\cite{SingAna1990}. In order to apply singularity analysis (i.e., transfer lemmata which allow to ``translate'' the local behaviour of a generating function around its dominant singularity into an asymptotic growth behaviour of the coefficients) it is necessary that the functions involved are analytic in a domain larger than the circle of convergence, namely, the functions have to be analytic for indented discs $\Delta :=\Delta(\phi,\eta) = \{z:\,\, |z| < 1 +\eta,\,\, |\Arg(z-1)|> \phi\}$, with $\eta>0$, $0<\phi<\frac{\pi}{2}$. Such functions are called $\Delta$-regular (see \cite{FlaFillKap2005}). Here, we have restricted ourselves to a definition of $\Delta$-regularity for functions with unique dominant singularity $z=1$, since this is sufficient for our purpose.
Later on we will show inductively that all the functions $N_{\ell,s}(z)$ are $\Delta$-regular, since they are generated from $\Delta$-regular functions via basic arithmetical operations together with the operations differentiation and integration.
In this context we require the following lemma, which is a slight generalization of corresponding ones shown in \cite{FlaFillKap2005} and that can be obtained in a completely analogeous manner; thus we omit here the proof.
\begin{lemma}[Singular differentiation and integration]\label{MISOsingana}
Let $f(z)$ be a $\Delta$-regular function, an analytic function in the domain $\Delta
:=\Delta(\phi,\eta)$,
\begin{equation*}
  \Delta(\phi,\eta) = \{ z:\,\, |z| < 1 +\eta,\,\, |\Arg(z-1)|> \phi\},%
\end{equation*}
with $\eta>0$, $0<\phi<\frac{\pi}{2}$, satisfying for $z\to 1$ the
expansion
\begin{equation*}
    f(z) = \Gro\Big(\frac{1}{(1-z)^{a}\Lo{z}{b}}\Big)
\end{equation*}
for $a>1$ and $b\ge 1$. Then $\int_{0}^{z} f(t) dt$ and $f'(z)$
are also $\Delta$-regular and they admit the expansions
\begin{equation*}
    \int_{0}^{z} f(t) dt =
    \Gro\Big(\frac{1}{(1-z)^{a-1}\Lo{z}{b}}\Big),\quad\text{and}\quad
    f'(z)  = \Gro\Big(\frac{1}{(1-z)^{a+1}\Lo{z}{b}}\Big).
\end{equation*}
\end{lemma}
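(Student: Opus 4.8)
The plan is to prove Lemma~\ref{MISOsingana} by reducing both assertions to the standard singularity-analysis transfer results for functions of the form $(1-z)^{-a}\Lo{z}{b}$ stated in~\cite{FlaFillKap2005,SingAna1990}, together with the closure of $\Delta$-regularity under differentiation and integration. Since the lemma is only a mild generalization, I would first recall that the reference handles the model scale $(1-z)^{\alpha}\Lo{z}{\beta}$ (with real $\alpha$ and integer $\beta$), and that $\Delta$-regularity is preserved: if $f$ is analytic in $\Delta(\phi,\eta)$ then so is $f'$ in a (possibly slightly shrunk) domain $\Delta(\phi',\eta')$ obtained by Cauchy's integral formula applied on a small circle around each point, and $\int_0^z f(t)\,dt$ is analytic in the original $\Delta(\phi,\eta)$ because the domain is simply connected and star-shaped with respect to $0$. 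Hence the only thing to check is the \emph{order of growth} near $z=1$.

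For the integration statement, I would write $F(z)=\int_0^z f(t)\,dt$ and estimate $|F(z)|$ for $z\to 1$ inside $\Delta$ by integrating along a path that first goes from $0$ to a fixed point and then radially/along the boundary towards $z$; on the portion near $1$ the hypothesis $|f(t)|=\Gro\big(|1-t|^{-a}|\log(1/(1-t))|^{-b}\big)$ with $a>1$ gives, after the substitution $1-t = (1-z)\xi$, the bound
\begin{equation*}
|F(z)| = \Gro\Big(\int |1-t|^{-a}\Lo{t}{-b}\,|dt|\Big) = \Gro\Big(\frac{1}{(1-z)^{a-1}\Lo{z}{b}}\Big),
\end{equation*}
where the drop of one power of $(1-z)$ is exactly the gain from one integration and the logarithmic factor is asymptotically unchanged because $\log(1/(1-t))\sim\log(1/(1-z))$ uniformly on the relevant range when $a>1$ (so the integral is dominated by the endpoint near $z$). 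For the differentiation statement I would use Cauchy's formula $f'(z) = \frac{1}{2\pi i}\oint_{|w-z|=\rho}\frac{f(w)}{(w-z)^2}\,dw$ with radius $\rho = c\,|1-z|$ for a suitably small constant $c$ so that the circle stays in $\Delta$ and $|1-w|\asymp|1-z|$ on it; this yields
\begin{equation*}
|f'(z)| = \Gro\Big(\frac{1}{\rho}\cdot\frac{1}{|1-z|^{a}\Lo{z}{b}}\Big) = \Gro\Big(\frac{1}{(1-z)^{a+1}\Lo{z}{b}}\Big),
\end{equation*}
again with the logarithm essentially frozen since $\log(1/(1-w))\sim\log(1/(1-z))$ on the small circle.

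I would then remark that these are precisely the claimed expansions, and that the hypotheses $a>1$, $b\ge 1$ are used only to guarantee that the constants in the $\Gro$-estimates are uniform and that the integral near $z$ converges with the stated order (for $a>1$ the integrated singularity remains of genuine polynomial type, so no spurious logarithm is created). The main obstacle — really the only subtlety — is controlling the logarithmic factor under integration: one must verify that on the integration contour approaching $z$ the quantity $\Lo{t}{-b}$ can be replaced by $\Lo{z}{-b}$ up to a bounded factor, which follows from the fact that for $t$ on the segment from a fixed point to $z$ inside $\Delta$ one has $c_1\le \frac{\log(1/(1-t))}{\log(1/(1-z))}\le c_2$ uniformly, so that the logarithm contributes neither an extra power nor an extra $\log\log$ term. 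Everything else is a routine adaptation of the proofs in~\cite{FlaFillKap2005}, which is why the lemma is stated without a detailed proof in the excerpt.
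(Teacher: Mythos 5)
The paper omits the proof of this lemma entirely, deferring to \cite{FlaFillKap2005} for the ``completely analogous'' argument, and your reconstruction is exactly the standard technique from that reference: Cauchy's integral formula on a circle of radius proportional to $|1-z|$ (with a slightly shrunk $\Delta$-domain) for singular differentiation, and an endpoint-dominated path integral with the substitution $1-t=(1-z)\xi$ (where $a>1$ is used to freeze the logarithmic factor at its value at $z$) for singular integration. Your proof is correct and matches the approach the paper intends; the only superfluity is the appeal to star-shapedness of $\Delta$, since simple connectivity alone already gives analyticity of the antiderivative.
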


The next lemma states the analytic properties of the functions $N_{\ell,s}(z)$, which turn out to be crucial to the approach presented.
\begin{lemma}
\label{LemmaMomsEnEll}
The generating functions $N_{\ell,s}(z)= E_v D_v^s N_\ell(z,v)$ are, for all $\ell \ge 1$ and $s \ge 0$, $\Delta$-regular functions.
Moreover, for $\ell, s \ge 1$, $N_{\ell,s}(z)$ admits the following local expansion around the dominant singularity $z=1$:
\begin{equation}\label{MISONellsexp}
    N_{\ell,s}(z) = \frac{\ell(\ell+s-2)!}{(\ell+s)(1-z)^{\ell+s-1}\Lo{z}{s}} +  \Gro\Big(\frac{1}{(1-z)^{\ell+s-1}\Lo{z}{s+1}}\Big).
\end{equation}
\end{lemma}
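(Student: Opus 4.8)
The plan is to establish both assertions by a double induction: an outer induction on $\ell\ge1$, and for each fixed $\ell$ an inner induction on $s\ge0$. Applying the operator $E_vD_v^s$ to the differential equation~\eqref{MISONellzvODE}, and using that $D_v$ commutes with differentiation in $z$, that the coefficient $(1-z)v\lo{z}+z(1-v)$ is affine in $v$, and that $\frac{v}{1-z}$ is linear in $v$, the Leibniz rule yields for $\ell,s\ge1$ the second-order linear differential equation
\begin{equation*}
(1-z)\lo{z}\,N_{\ell,s}''(z)+(\ell-1)\,N_{\ell,s}'(z)-\frac{1}{1-z}\,N_{\ell,s}(z)=\Phi_{\ell,s}(z),
\end{equation*}
whose inhomogeneity
\begin{equation*}
\Phi_{\ell,s}(z)=E_vD_v^s\,b_\ell(z,v)-s\big((1-z)\lo{z}-z\big)N_{\ell,s-1}''(z)+\frac{s}{1-z}\,N_{\ell,s-1}(z)
\end{equation*}
is, by~\eqref{MISObellzvDef}, an at most bilinear combination of the functions $N_{r,j}$ and their first and second derivatives for $1\le r\le\ell-1$, $0\le j\le s$, together with $N_{\ell,s-1}$ --- that is, involves only quantities covered by the induction hypotheses. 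The base case $s=0$ is explicit: $N_{\ell,0}(z)=(\ell-2)!\big((1-z)^{-(\ell-1)}-1\big)$ for $\ell\ge2$ and $N_{1,0}(z)=\lo{z}$, both plainly $\Delta$-regular; in particular this settles the whole base case $\ell=1$ of the outer induction (for $\ell=1$ one has $b_1\equiv0$, so only the inner induction is needed).

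For the inductive step I would first show that $\Phi_{\ell,s}$ is $\Delta$-regular with an explicitly known leading singular term. By the induction hypotheses each function occurring in $\Phi_{\ell,s}$ is $\Delta$-regular with the asserted local expansion~\eqref{MISONellsexp} (or the explicit $s=0$ form), and $\Delta$-regularity is preserved under sums, products, and --- by Lemma~\ref{MISOsingana} --- differentiation, which raises the polar order by one and leaves the exponent of $\lo{z}$ unchanged. Comparing the singular orders of all the summands reveals that exactly two of them are of maximal order: the term $\ell\,N_{1,0}(z)\,N_{\ell-1,s}''(z)=\ell\,\lo{z}\,N_{\ell-1,s}''(z)$ coming from the $r=1$ summand of $E_vD_v^s b_\ell$ (present only for $\ell\ge2$), and the term $sz\,N_{\ell,s-1}''(z)$; every other term has either strictly smaller polar order, or the same polar order but one higher power of $\lo{z}$ in the denominator. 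Adding their leading coefficients $\ell(\ell-1)(\ell+s-2)!$ and $s\,\ell(\ell+s-2)!$ gives
\begin{equation*}
\Phi_{\ell,s}(z)=\frac{\ell\,(\ell+s-1)!}{(1-z)^{\ell+s}\,\Lo{z}{s-1}}+\Gro\Big(\frac{1}{(1-z)^{\ell+s}\,\Lo{z}{s}}\Big).
\end{equation*}

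To solve the equation I would exploit --- this being the decisive point, since no closed form for $N_\ell(z,v)$ is available --- that the homogeneous equation has the explicit solution $y_1(z)=\lo{z}+\ell-1$, as one checks by direct substitution. Writing $N_{\ell,s}(z)=\big(\lo{z}+\ell-1\big)M(z)$ and $P(z):=M'(z)$, the $M$-terms cancel and the equation reduces to the first-order linear ODE
\begin{equation*}
(1-z)\lo{z}\big(\lo{z}+\ell-1\big)P'(z)+\Big((\ell+1)\lo{z}+(\ell-1)^2\Big)P(z)=\Phi_{\ell,s}(z).
\end{equation*}
Its integrating factor, obtained by a partial-fraction decomposition after the substitution $w=\lo{z}$, is $\mu(z)=\Lo{z}{\ell-1}\big(\lo{z}+\ell-1\big)^2$, so that $\big(\mu P\big)'(z)=\frac{\Lo{z}{\ell-2}(\lo{z}+\ell-1)}{1-z}\,\Phi_{\ell,s}(z)$. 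Integrating this $\Delta$-regular right-hand side (Lemma~\ref{MISOsingana}), dividing by $\mu$, integrating once more to recover $M$, and finally multiplying by $\lo{z}+\ell-1$ produces successively the singular types $\frac{1}{(1-z)^{\ell+s}\Lo{z}{s+1}}$ for $P$, $\frac{1}{(1-z)^{\ell+s-1}\Lo{z}{s+1}}$ for $M$, and $\frac{1}{(1-z)^{\ell+s-1}\Lo{z}{s}}$ for $N_{\ell,s}$, with leading constant $\frac{\ell(\ell+s-2)!}{\ell+s}$, exactly as claimed in~\eqref{MISONellsexp}; the homogeneous part of the general solution, being $\Gro(\lo{z})$, is absorbed in the error term, and the precise function $N_{\ell,s}$ is singled out by its Taylor coefficients at $z=0$. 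Finally, $\Delta$-regularity of $N_{\ell,s}$ itself follows from the differential equation together with analyticity at $z=0$: written as $N_{\ell,s}''=\big(\Phi_{\ell,s}-(\ell-1)N_{\ell,s}'+\frac{1}{1-z}N_{\ell,s}\big)\big/\big((1-z)\lo{z}\big)$, the equation has its only finite singularities at $z=0$ and $z=1$, and since $N_{\ell,s}$ (a convergent power series) is analytic at $z=0$ while $z=1\notin\Delta(\phi,\eta)$, the solution extends analytically over all of $\Delta(\phi,\eta)$.

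The main obstacle is the bookkeeping in the second step: proving that among the many bilinear summands of $E_vD_v^s b_\ell$ and the $v$-derivative corrections exactly two carry the maximal singularity, and that their leading coefficients combine to $\ell(\ell+s-1)!$; and then propagating this constant correctly through the two integrations, the division by $\mu$, and the multiplication by $\lo{z}+\ell-1$ in the third step. A minor technical point is that the intermediate integrand $\frac{\Lo{z}{\ell-2}(\lo{z}+\ell-1)}{1-z}\Phi_{\ell,s}(z)$ may carry a positive power of $\lo{z}$ (when $s\le\ell$), so one applies there an elementary variant of Lemma~\ref{MISOsingana}; after the division by $\mu$ the function $P$ is again of the form $\Gro\big(1/((1-z)^a\Lo{z}{b})\big)$ with $a>1$, $b\ge1$, to which Lemma~\ref{MISOsingana} applies as stated.
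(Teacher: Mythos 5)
Your proposal is correct and follows essentially the same route as the paper: the same base case $s=0$, the same ODE obtained by applying $E_vD_v^s$, the same identification of the two dominant terms in the inhomogeneity (the $r=1$ summand of the bilinear piece and the $s$-derivative correction from the affine-in-$v$ coefficient, with leading coefficients $\ell(\ell-1)(\ell+s-2)!$ and $s\ell(\ell+s-2)!$ summing to $\ell(\ell+s-1)!$), and the same double integration, propagating the constant to $\frac{\ell(\ell+s-2)!}{\ell+s}$. The only difference is cosmetic: the paper solves the ODE by variation of parameters using both explicit homogeneous solutions $N_\ell^{[h_1]}=\ell-1+\lo{z}$ and $N_\ell^{[h_2]}$, whereas you use reduction of order via $N_{\ell,s}=(\lo{z}+\ell-1)M(z)$ and an integrating factor $\Lo{z}{\ell-1}(\lo{z}+\ell-1)^2$. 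These produce the identical integral representation~\eqref{MISONellssol}, so the approaches coincide. You are slightly more cavalier than the paper about pinning down the initial conditions (the paper verifies $N_{\ell,s}(0)=0$, $N_{\ell,s}'(0)$ explicitly), but since the homogeneous contributions are only $\Gro(\lo{z})$ at $z=1$, your remark that they are absorbed by the error term suffices for the stated local expansion.
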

\begin{proof}
To prove Lemma~\ref{LemmaMomsEnEll} we will use induction with respect to $\ell$ and $s$.
First we consider the case $s=0$. Using definition~\eqref{MISONellzvDef1} we obtain that the functions $N_{\ell,0}(z) = \sum_{n \ge \ell} (n-1)^{\underline{\ell-2}} z^{n+1-\ell}$ are given by
\begin{equation*}
  N_{\ell,0}(z) = \begin{cases} \lo{z}, & \quad \ell=1, \\ \frac{(\ell-2)!}{(1-z)^{\ell-1}} - (\ell-2)!, & \quad \ell \ge 2. \end{cases} 
\end{equation*}
Thus the functions $N_{\ell,0}(z)$, $\ell \ge 1$, are $\Delta$-regular. Note that for $\ell \ge 2$ they even admit the local expansion~\eqref{MISONellsexp}.

Next we consider the case $\ell, s \ge 1$ and start with the differential equation~\eqref{MISONellzvODE} for the functions $N_{\ell}(z,v)$ with inhomogeneous part $b_{\ell}(z,v)$ given by \eqref{MISObellzvDef}. Applying the operator $E_{v} D_{v}^{s}$ to this equation yields the following second order differential equation for $N_{\ell,s}(z)$, $\ell, s \ge 1$:
\begin{equation}\label{MISONellODE}
  (1-z) \lo{z} N_{\ell,s}''(z) + (\ell-1) N_{\ell,s}'(z) - \frac{1}{1-z} N_{\ell,s}(z) = b_{\ell,s}(z),
\end{equation}
where the inhomogeneous part $b_{\ell,s}(z)$ is given by
\begin{equation}
\begin{split}\label{MISObellDef}
b_{\ell,s}(z) & = s \Big(1 - (1-z) - (1-z)\lo{z}\Big) N_{\ell,s-1}''(z) + \frac{s}{1-z} N_{\ell,s-1}(z) \\
& \quad \mbox{} + \sum_{r=1}^{\ell-1} \binom{\ell}{r} \sum_{j=0}^{s} N_{r,j}(z) N_{\ell-r,s-j}''(z) + s \sum_{r=1}^{\ell-1} \binom{\ell}{r} \sum_{j=0}^{s-1} N_{r,j}(z) N_{\ell-r,s-1-j}''(z) \\
& \quad \mbox{} + \sum_{r=1}^{\ell-1} \binom{\ell}{r-1} (\ell-r-1)! \sum_{j=0}^{s} \binom{s}{j} (\ell-r)^{\underline{j}} N_{r,s-j}'(z).
\end{split}
\end{equation}

%For the getting the expectation we differentiate once with respect
%to $v$ and evaluate at $v=1$. 
%\begin{equation}
%    \label{MISOode7}
%   (\ell-1) \dzp N_{\ell,1}(z) %
%        - \frac{1}{1-z} N_{\ell,1}(z)%
%        + (1-z)\log\left(\frac{1}{1-z}\right)%
%           \dzpZwo  N_{\ell,1}(z) = b_{\ell,1}(z),
%\end{equation}
%with inhomogeneous part
%\begin{equation}
%\begin{split}
%    b_{\ell,1}(z) & = \frac{1}{1-z}N_{\ell,0}(z) - (1-z)\log\left(\frac{1}{1-z}\right)%
%            \dzpZwo N_{\ell,0}(z)  + z \dzpZwo N_{\ell,0}(z) \\
%                & \quad + \sum_{r=1}^{\ell-1}\binom{\ell}{r}\Big( N_{r,0}(z)\dzpZwo  N_{\ell-r,0}(z) %
%    + N_{r,1}(z) \dzpZwo N_{\ell-r,0}(z)%
%    + N_{r,0}(z) \dzpZwo N_{\ell-r,1}(z) \Big) \\
%    & \quad + \sum_{r=1}^{\ell-1}\binom{\ell}{r-1}(\ell-r-1)!%
%    \Big( (\ell-r) \dzp N_{r,0}(z) + \dzp N_{r,1}(z)\Big).
%\end{split}
%\end{equation}

As one can check easily, the homogeneous differential equation corresponding to \eqref{MISONellODE} has the general solution
\begin{equation}
\begin{split}
    N_{\ell,s}^{[h]}(z) & = C_{1,s} \cdot  N_{\ell}^{[h_1]}(z) + C_{2,s} \cdot N_{\ell,s}^{[h_2]}(z),
\end{split}
\end{equation}
with solutions  $N_{\ell}^{[h_1]}(z)$, $N_{\ell}^{[h_2]}(z)$ given by
\begin{equation}
\begin{split}
\label{MISOdefSols}
N_{\ell}^{[h_1]}(z) & = \ell-1 + \lo{z},\\
N_{\ell}^{[h_2]}(z)& = \big(\ell-1 + \lo{z}\big) \int\frac{dz}{\Lo{z}{\ell-1} \cdot \big(\ell-1 + \lo{z}\big)^2}.
\end{split}
\end{equation}
Note that given a function $f(z)$, we assume in the definition of an antiderivative $\int f(z) dz := \int_{\alpha}^{z} f(t) dt$, with a real $0 < \alpha < 1$.

Applying the variation of parameters-method leads then to the following particular solution of the inhomogeneous differential equation \eqref{MISONellODE}:
\begin{equation}
\label{MISOpartsol}
    N_{\ell,s}^{[p]}(z) = N_{\ell}^{[h_1]}(z) \int \frac{-B_{\ell,s}(z) N_{\ell}^{[h_2]}(z)}{D_\ell(z)} dz +
        N_{\ell}^{[h_2]}(z) \int \frac{B_{\ell,s}(z) N_{\ell}^{[h_1]}(z)}{D_\ell(z)} dz,
\end{equation}
where $B_{\ell,s}(z) = \frac{b_{\ell,s}(z)}{(1-z) \lo{z}}$ and
\begin{equation}
D_\ell(z) = N_{\ell}^{[h_1]}(z) \dzp N_{\ell}^{[h_2]}(z) - N_{\ell}^{[h_2]}(z) \dzp N_{\ell}^{[h_1]}(z) = \frac{1}{\Lo{z}{\ell-1}}
\end{equation}
is the Wronski determinant of the two homogeneous solutions $N_{\ell}^{[h_1]}(z)$ and $N_{\ell}^{[h_2]}(z)$.
Combining the expressions appearing in \eqref{MISOpartsol} allows to adapt the limit of integration to $\alpha=0$ yielding the following particular solution of \eqref{MISONellODE}, which, as discussed below, turns out to satisfy also the initial conditions, i.e., which is the required solution $N_{\ell,s}(z)$:
\begin{multline}\label{MISONellssol}
  N_{\ell,s}(z) = \big(\ell-1+\lo{z}\big) \int_{0}^{z} \bigg(\int_{0}^{t} \frac{\Lo{u}{\ell-2} \cdot \big(\ell-1+\lo{u}\big) b_{\ell,s}(u)}{1-u} du\bigg) \\
  \times \frac{dt}{\Lo{t}{\ell-1} \cdot \big(\ell-1+\lo{t}\big)^{2}},
\end{multline}
with $b_{\ell,s}(z)$ defined in \eqref{MISObellDef}. Note that according to \eqref{MISONellzvDef1} the initial conditions are given by $N_{\ell,s}(0)=0$ and
$N_{\ell,s}'(0)=(\ell-1)! \E\big(L_{\ell,\ell}^{\underline{s}}\big)$. Since $L_{1,1}=0$ we get in particular $N_{1,s}'(0) = 0$, for $s \ge 1$. Taking into account $N_{\ell,s}(0)=0$ and considering \eqref{MISONellODE} we further obtain the relation $N_{\ell,s}'(0) = \frac{1}{\ell-1} b_{\ell,s}(0)$, for $\ell \ge 2$, which implies $b_{\ell,s}(0) = (\ell-2)! \E\big(L_{\ell,\ell}^{\underline{s}}\big)$. Furthermore, \eqref{MISObellDef} yields $b_{1,s}(0)=0$, for $s \ge 1$, and thus $N_{1,s}'(0)=b_{1,s}(0)$. But this initial conditions
(with $\ell, s \ge 1$):
\begin{equation*}
  N_{\ell,s}(0)=0, \quad N_{\ell,s}'(0) = \begin{cases} \frac{1}{\ell-1} b_{\ell,s}(0), & \ell \ge 2, \\ b_{1,s}(0), & \ell=1, \end{cases}
\end{equation*}
are exactly the ones satisfied by the given solution \eqref{MISONellssol}.

We observe that the representation \eqref{MISONellssol} together with the closure properties for singular differentiation and integration inductively shows that all $N_{\ell,s}(z)$, $\ell,s \ge 1$, are $\Delta$-regular functions.

It remains to show in an inductive way the local expansions \eqref{MISONellsexp} of $N_{\ell,s}(z)$ in a complex neighbourhood of $z=1$. We will first consider the case $\ell=1$, with an arbitrary $s \ge 1$, and assume that \eqref{MISONellsexp} holds for $\ell=1$ and all $1 \le r<s$. Plugging $\ell=1$ into \eqref{MISONellssol} yields the representation
\begin{equation}\label{MISON1szsol}
  N_{1,s}(z) = \lo{z} \int_{0}^{z} \left(\int_{0}^{t} \frac{b_{1,s}(u)}{1-u} du\right) \frac{dt}{\Lo{t}{2}},
\end{equation}
with
\begin{equation*}
  b_{1,s}(z) = s \Big(1-(1-z) - (1-z) \lo{z}\Big) N_{1,s-1}''(z) + \frac{1}{1-z} N_{1,s-1}(z).
\end{equation*}
Using the induction hypothesis and Lemma~\ref{MISOsingana} together with the known functions $N_{\ell,0}(z)$ easily shows the local expansion
\begin{equation*}
  b_{1,s}(z) = \frac{s!}{(1-z)^{s+1} \Lo{z}{s-1}} \cdot \Big(1+\mathcal{O}\Big(\frac{1}{\lo{z}}\Big)\Big).
\end{equation*}
Due to \eqref{MISON1szsol}, further applications of Lemma~\ref{MISOsingana} for singular integration lead then to the local expansion
\begin{equation*}
  N_{1,s}(z) = \frac{(s-1)!}{(s+1) (1-z)^{s} \Lo{z}{s}} \cdot \Big(1+\mathcal{O}\Big(\frac{1}{\lo{z}}\Big)\Big),
\end{equation*}
which shows the required result for the case $\ell=1$.

Now we consider the case $\ell \ge 2$, with an arbitrary $s \ge 1$, and assume that \eqref{MISONellsexp} holds for all $(j,r)$ with $1 \le j \le \ell$, $1 \le r \le s$ and $(j,r) \neq (\ell,s)$.
Using the induction hypothesis together with singular differentiation we can examine each summand of $b_{\ell,s}(z)$ as given in \eqref{MISObellDef}. It turns out that the main contribution is coming from the following expressions:
\begin{align*}
  & \bullet \; s \Big(1-(1-z)-(1-z) \lo{z}\Big) N_{\ell,s-1}''(z) \\
  & \quad = \frac{s \ell (\ell+s-1)!}{(\ell+s-1) (1-z)^{\ell+s} \Lo{z}{s-1}} \cdot \Big(1+\mathcal{O}\Big(\frac{1}{\lo{z}}\Big)\Big), \\
  & \bullet \; \sum_{r=1}^{\ell-1} \binom{\ell}{r} \sum_{j=0}^{s} N_{r,j}(z) N_{\ell-r,s-j}''(z) = \ell N_{1,0}(z) N_{\ell-1,s}''(z) + \sum_{\begin{smallmatrix} 1 \le r \le \ell-1, \\ 0 \le j \le s, \; (j,r) \neq (0,1) \end{smallmatrix}} \binom{\ell}{r} N_{r,j}(z) N_{\ell-r,s-j}''(z) \\
  & \quad = \frac{\ell (\ell-1) (\ell+s-1)!}{(\ell+s-1) (1-z)^{\ell+s} \Lo{z}{s-1}} \cdot \Big(1+\mathcal{O}\Big(\frac{1}{\lo{z}}\Big)\Big),
\end{align*}
whereas the contribution of the remaining terms of $b_{\ell,s}(z)$ is of order $\mathcal{O}\left(\frac{1}{(1-z)^{\ell+s} \log^{s}\left(\frac{1}{1-z}\right)}\right)$.
Thus, adding these contributions, we obtain that $b_{\ell,s}(z)$ has the following local expansion around $z=1$:
\begin{equation}\label{MISObellsexp}
  b_{\ell,s}(z) = \frac{\ell (\ell+s-1)!}{(1-z)^{\ell+s} \Lo{z}{s-1}} \cdot \Big(1+\mathcal{O}\Big(\frac{1}{\lo{z}}\Big)\Big).
\end{equation}
Using the representation \eqref{MISONellssol} and expansion \eqref{MISObellsexp}, straightforward applications of singular integration yield the following local expansion of $N_{\ell,s}(z)$:
\begin{equation*}
  N_{\ell,s}(z) = \frac{\ell (\ell+s-2)!}{(\ell+s) (1-z)^{\ell+s-1} \Lo{z}{s}} \cdot \Big(1+\mathcal{O}\Big(\frac{1}{\lo{z}}\Big)\Big),
\end{equation*}
which shows the required result for the case $\ell \ge 2$ and completes the proof of Lemma~\ref{LemmaMomsEnEll}.
\end{proof}

Using Lemma~\ref{LemmaMomsEnEll} and \eqref{MISONellzvDef2} immediately yields, by an application of basic singularity analysis, the following asymptotic growth behaviour of the $s$-th factorial moments of $\rvL$:
\begin{equation}\label{MISOLnellfakmom}
   \E(\fallfak{\rvL}{s}) = \frac{1}{\fallfak{(n-1)}{\ell-2}}[z^{n+1-\ell}]N_{\ell,s}(z)= 
   \frac{\ell n^s}{(\ell+s)\log^s n}  + \mathcal{O}\Big(\frac{n^s}{\log^{s+1}{n}}\Big), \quad \text{for $s, \ell \ge 1$}.
\end{equation}
Since the sequence of $s$-th integer moments of a r.v.\ $X$ can be obtained from the corresponding sequence of $s$-th factorial moments via the relation
\begin{equation}\label{MISOfallfakordmom}
  \E(X^{s}) = \sum_{j=1}^{s} \Stir{s}{j} \E(X^{\underline{j}}), \quad \text{for $s \ge 1$},
\end{equation}
we further get from \eqref{MISOLnellfakmom} the following asymptotic expansion of the $s$-th moments of $\rvL$, which proves the respective part of Theorem~\ref{MISOthe2}:
\begin{equation*}
     \E(\rvL^s) = \sum_{j=1}^{s}\Stir{s}{j}\E(\fallfak{\rvL}{j}) = \Stir{s}{s}\E(\fallfak{\rvL}{s}) + \mathcal{O}\Big(\frac{n^{s-1}}{\log^{s}{n}}\Big)
	 = \frac{\ell n^s}{(\ell+s)\log^s n} + \mathcal{O}\Big(\frac{n^s}{\log^{s+1}{n}}\Big).
\end{equation*}
Thus, after suitable normalization, the $s$-th moments of $\rvL$ converge to the moments of a beta-distributed random variable with parameters $1$ and $\ell$:
\begin{equation*}
\E\Big(\big(\frac{\log n}{n} \rvL\big)^s\Big) =  \frac{\ell}{\ell+s} + \mathcal{O}\Big(\frac{1}{\log n}\Big),
\end{equation*}
which proves the limiting distribution result given in Theorem~\ref{MISOthe2}.

\section{Isolating randomly selected nodes\label{MISOSecRandom}}

\subsection{Generating functions description\label{MISOSecEllRandom}}
Now we study the r.v.\ $\rvY$ satisfying the distributional recurrence \eqref{MISOdistributionalY1}, where we use an approach similar to the one carried out in Section~\ref{MISOSecEnnPlusEinsMinusEll}.
Here we introduce for $\ell \ge 1$ the generating functions
\begin{equation}
\label{MISOGellzvDef}
   G_{\ell}(z,v) := \sum_{n \ge \ell} \frac{\binom{n}{\ell}}{n} \E(v^{\rvY}) z^{n}.
\end{equation}

The following proposition gives a recursive description of the sequence of functions $G_{\ell}(z,v)$.
\begin{prop}\label{MISGellzvProp}
The generating functions $G_{\ell}(z,v) $ satisfy for $\ell \ge 1$ the following second order differential equations:
\begin{multline}
    \label{MISOGellzvODE}
   (1-z) \Big(z(1-v) + v(1-z) \lo{z}\Big) \dzpZwo G_\ell(z,v) \\
   \mbox{} - \Big(z(1-v) + 2v(1-z)\lo{z}\Big) \dzp G_{\ell}(z,v) + (1-v) G_\ell(z,v) 
   = b_{\ell}(z,v),
\end{multline}
where the inhomogeneous part $b_{\ell}(z,v)$ is given by
\begin{equation}\label{MISObellzvDefG}
\begin{split}
    b_{\ell}(z,v) & = - (1-z)^{2} \sum_{r=1}^{\ell-1} \dzpZwo G_{r}(z,v) \cdot G_{\ell-r}(z,v) \\
    & \quad \mbox{} + v (1-z)^{2} \sum_{r=1}^{\ell-1} \sum_{q=1}^{\ell-r-1} \dzp G_{r}(z,v) \cdot \dzp G_{q}(z,v) \cdot G_{\ell-r-q}(z,v) \\
    & \quad \mbox{} + v (1-z)^{2} \lo{z} \sum_{r=1}^{\ell-1} \dzp G_{r}(z,v) \cdot \dzp G_{\ell-r}(z,v) \\
    & \quad \mbox{} + 2v(1-z) \sum_{r=1}^{\ell-1} \dzp G_{r}(z,v) \cdot G_{\ell-r}(z,v).
\end{split}
\end{equation}
\end{prop}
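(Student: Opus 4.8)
The plan is to follow exactly the route used in Proposition~\ref{MISOpropMellzv} and in the preceding proposition for $N_\ell(z,v)$: translate the distributional recurrence~\eqref{MISOdistributionalY1} into a recurrence for the probability generating function $\E(v^{\rvY})$ by means of the joint law~\eqref{MISOdistributionalY2}, and then multiply by a suitable weight and sum over $n$ to obtain the claimed second order differential equation. Concretely, from~\eqref{MISOdistributionalY1} one gets
\[
\E(v^{\rvY}) = v \sum_{r=0}^{\ell}\sum_{k}\P\{\tilde I_n=k,\tilde J_\ell=r\}\,\E(v^{Y_{k,r}})\,\E(v^{Y_{n-k,\ell-r}}),
\]
and the first step is to split off the two boundary ranges $r=0$ and $r=\ell$: since $Y_{m,0}=0$ these contribute $\E(v^{Y_{n-k,\ell}})$ and $\E(v^{Y_{k,\ell}})$ respectively, i.e.\ terms linear in the unknown sequence $\big(\E(v^{Y_{n,\ell}})\big)_{n}$, which after summation will assemble into the left-hand operator of~\eqref{MISOGellzvODE}, whereas the range $1\le r\le\ell-1$ produces genuine products and will become the inhomogeneous part $b_\ell(z,v)$.

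Next I would multiply by $\tfrac{\binom{n}{\ell}}{n}z^n$ and sum over $n\ge\ell$, using $\tfrac{\binom{n}{\ell}}{n}\,\P\{\tilde I_n=k,\tilde J_\ell=r\}=\tfrac{\binom{k}{r}\binom{n-k}{\ell-r}}{(n-1)(n-k+1)(n-k)}$. The Vandermonde-type convolution $\binom{k}{r}\binom{n-k}{\ell-r}$ turns each summand into a product of the generating functions $G_r(z,v)$ and $G_{\ell-r}(z,v)$, once the extra denominator $\tfrac{1}{(n-1)(n-k+1)(n-k)}$ has been absorbed. The factor $\tfrac{1}{(n-k+1)(n-k)}$, read as a function of the size $n-k$ of the second subtree, is the reciprocal of a product of two consecutive integers; at the level of generating functions this means one is applying the inverse of a second order differential operator, which is precisely what forces a \emph{second} order equation and produces the prefactors $(1-z)$ and $(1-z)^2$, while the remaining $\tfrac{1}{n-1}$ together with $\sum_k \tfrac{z^k}{k}$-type sums produces the logarithmic factors $\lo{z}$. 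Collecting the terms linear in $G_\ell(z,v)$ yields the operator $(1-z)\big(z(1-v)+v(1-z)\lo{z}\big)\dzpZwo - \big(z(1-v)+2v(1-z)\lo{z}\big)\dzp + (1-v)$, and the contributions with $1\le r\le\ell-1$, after reorganising the nested convolutions, give the four-term expression~\eqref{MISObellzvDefG}; the initial conditions $Y_{1,1}=0$ and $Y_{\ell,\ell}=\ell-1$ then pin down the solution. As in the earlier propositions I would only indicate these computations, not carry them out in full.

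The main obstacle is the bookkeeping in the inhomogeneous part. Because the denominator $\tfrac{1}{(n-k+1)(n-k)}$ forces one of the two convolution factors to be replaced by a second derivative, with lower-order correction terms carrying the $(1-z)$ and $\lo{z}$ weights, a single product $\E(v^{Y_{k,r}})\,\E(v^{Y_{n-k,\ell-r}})$ unfolds into several terms; the one in which the derivative lands on a factor that itself already carries the $\lo{z}$ weight produces, after re-summation, the triple product $\dzp G_{r}(z,v)\cdot\dzp G_{q}(z,v)\cdot G_{\ell-r-q}(z,v)$. Verifying that all these pieces assemble with exactly the coefficients $-(1-z)^2$, $v(1-z)^2$, $v(1-z)^2\lo{z}$ and $2v(1-z)$ displayed in~\eqref{MISObellzvDefG} is the delicate step; it is analogous to, but more intricate than, the evaluation of the sum $\sum_{r=1}^{\ell-1}\big(\tfrac1r + \tfrac{r-1}{(\ell-r+1)(\ell-r)}\big)$ in the proof of Proposition~\ref{MISOExplizit}. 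As an independent cross-check one can verify the stated differential equation directly, by extracting $[z^n]$ on both sides of~\eqref{MISOGellzvODE} and matching with the recurrence for $\E(v^{\rvY})$ obtained from~\eqref{MISOdistributionalY1} and~\eqref{MISOdistributionalY2}.
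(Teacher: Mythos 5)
Your overall plan is the right one and matches the paper's high-level route (distributional recurrence $\to$ PGF recurrence $\to$ functional equation), but the paper implements it through a device you do not use: it introduces the trivariate generating function $G(z,v,u)=\sum_{\ell\ge 1}G_\ell(z,v)u^\ell$, multiplies the PGF recurrence by $\tfrac{(n-1)\binom{n}{\ell}}{n}z^n u^\ell$ and sums over all $n\ge\ell\ge 1$, obtains first an integro-differential equation and then the single second-order \emph{non-linear} ODE~\eqref{MISOGzvuODE} for $G(z,v,u)$, and only afterwards extracts $[u^\ell]$. This buys two things you are fighting for by hand: the binomial convolution $\binom{k}{r}\binom{n-k}{\ell-r}$ is generated automatically by the product $G\cdot G$ (so there is no need to split the ranges $r=0,\ell$ from $1\le r\le\ell-1$ before summing -- the splitting into the linear operator on $G_\ell$ and the inhomogeneous part $b_\ell$ happens only at the very end, when extracting $[u^\ell]$ from the products $G\dzpZwo G$, $(\dzp G)^2$, $G(\dzp G)^2$, etc.), and the nested triple sum in~\eqref{MISObellzvDefG} appears mechanically as $[u^\ell]$ of $G\cdot(\dzp G)^2$ rather than as a reorganisation you flag as ``the delicate step.''

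There is also a small but real bookkeeping slip in your version. In both earlier propositions (for $M_\ell$ and $N_\ell$) the recurrence is multiplied by a weight that \emph{includes} the factor $(n-1)$, precisely to cancel the $\tfrac{1}{n-1}$ coming from the splitting probabilities; the paper does the same here, using $\tfrac{(n-1)\binom{n}{\ell}}{n}$. You multiply by $\tfrac{\binom{n}{\ell}}{n}$ only, notice the leftover $\tfrac{1}{n-1}$, and then claim it ``produces the logarithmic factors $\lo{z}$.'' That is not what happens: at the level of generating functions a factor $\tfrac{1}{n-1}$ corresponds to an additional integration (so it would turn the expected second-order ODE into an integro-differential or third-order relation), while the logarithms come from the partial-fraction structure of $\tfrac{1}{(n-k+1)(n-k)}$ in $p_{n,k}$ (cf.\ $\sum_{m\ge 1}\tfrac{z^m}{(m+1)m}=1-\tfrac{1-z}{z}\lo{z}$). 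Including the missing $(n-1)$ — so that the left-hand side becomes $\bigl(z\dzp-1\bigr)G_\ell$ rather than $G_\ell$ itself — repairs this and brings your computation in line with the order of the operator in~\eqref{MISOGellzvODE}. With that correction, and at the level of detail the paper itself gives (``we omit here these straightforward computations''), your $\ell$-by-$\ell$ variant is a valid alternative to the paper's trivariate route.
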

\begin{proof}
In order to treat the distributional recurrence \eqref{MISOdistributionalY1} and to get Proposition~\ref{MISGellzvProp} it turns out to be appropriate to introduce the trivariate generating functions
$G(z,v,u) := \sum_{\ell \ge 1} G_{\ell}(z,v) u^{\ell}$.
Multiplying \eqref{MISOdistributionalY1} with $\frac{(n-1) \binom{n}{\ell}}{n} z^{n} u^{\ell}$ and taking summation over all $n \ge \ell \ge 1$ gives, after straightforward computations, the following integro-differential equation for $G(z,v,u)$:
\begin{align*}
  z \dzp G(z,v,u) - G(z,v,u) & = v \dzp G(z,v,u) \cdot \int_{0}^{z} G(z,v,u) dt + \frac{v}{1-z} \int_{0}^{z} G(t,v,u) dt \\
  & \quad \mbox{} + v \dzp G(z,v,u) \cdot \Big(z-(1-z)\lo{z}\Big),
\end{align*}
which, after simple algebraic operations, yields the following second order non-linear differential equation:
\begin{equation}\label{MISOGzvuODE}
\begin{split}
  & (1-z)^{2} G(z,v,u) \cdot \dzpZwo G(z,v,u) + (1-z) \Big(z(1-v)+v(1-z)\lo{z}\Big) \dzpZwo G(z,v,u) \\
  & \quad \mbox{} - v(1-z)^{2} G(z,v,u) \cdot \Big(\dzp G(z,v,u)\Big)^{2} - v(1-z)^{2} \lo{z} \Big(\dzp G(z,v,u)\Big)^{2} \\
  & \quad \mbox{} - 2v(1-z) G(z,v,u) \cdot \dzp G(z,v,u) - \Big(z(1-v)+2v(1-z)\lo{z}\Big) \dzp G(z,v,u) \\
  & \quad \mbox{} + (1-v) G(z,v,u) = 0.
\end{split}
\end{equation}
The stated differential equation \eqref{MISOGellzvODE} follows now from \eqref{MISOGzvuODE} by extracting coefficients, $G_{\ell}(z,v) = [u^{\ell}] G(z,v,u)$, where we omit here these straightforward computations.
\end{proof}

\subsection{Asymptotics of the moments}
Again we will apply the method of moments to show the beta-distributed limit law of a suitably normalized version of $\rvY$. Thus, we introduce, for $\ell \ge 1$ and $s \ge 0$, the functions
\begin{equation}\label{MISOGellzvDef1}
  G_{\ell,s}(z) := E_v D_v^s G_\ell(z,v) = \sum_{n \ge \ell} \frac{\binom{n}{\ell}}{n} \E\big(\rvY^{\underline{s}}\big) z^{n},
\end{equation}
with $G_{\ell}(z,v)$ defined in \eqref{MISOGellzvDef}. Therefore, the $s$-th factorial moments of $\rvY$ can be obtained as follows:
\begin{equation}\label{MISOGellzvDef2}
  \E(\fallfak{\rvY}s) = \frac{n}{\binom{n}{\ell}}[z^{n}]G_{\ell,s}(z).
\end{equation}

The following lemma collects the analytic properties of the functions $G_{\ell,s}(z)$, required to deduce the asymptotic behaviour of the moments of $\rvY$.
\begin{lemma}
\label{LemmaMomsGells}
The generating functions $G_{\ell,s}(z)= E_v D_v^s G_{\ell}(z,v)$ are, for all $\ell \ge 1$ and $s \ge 0$, $\Delta$-regular functions.
Moreover, for $\ell \ge 1$ and $s \ge 0$, $G_{\ell,s}(z)$ admits the following local expansion around the dominant singularity $z=1$:
\begin{equation}\label{MISOGellsexp}
  G_{\ell,s}(z) = \frac{\alpha_{\ell,s}}{(1-z)^{\ell+s}\Lo{z}{s}} +  \Gro\Big(\frac{1}{(1-z)^{\ell+s}\Lo{z}{s+1}}\Big),
\end{equation}
with
\begin{equation}\label{MISOalphaells}
  \alpha_{\ell,s} = \frac{s!}{\ell+s} \binom{\ell+s-1}{s}.
\end{equation}
\end{lemma}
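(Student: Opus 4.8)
The plan is to follow the pattern of the proof of Lemma~\ref{LemmaMomsEnEll}, using a double induction on $\ell$ and $s$. The base case $s=0$ is explicit: since $\frac{\binom{n}{\ell}}{n}=\frac1\ell\binom{n-1}{\ell-1}$, summation in~\eqref{MISOGellzvDef1} gives $G_{\ell,0}(z)=\frac{z^{\ell}}{\ell(1-z)^{\ell}}$ for all $\ell\ge1$, which is rational, hence $\Delta$-regular, and already exhibits~\eqref{MISOGellsexp} with $\alpha_{\ell,0}=\frac1\ell=\frac{0!}{\ell}\binom{\ell-1}{0}$ (recall $\Lo z0=1$). For the inductive step fix $\ell\ge1$, $s\ge1$ and apply $E_vD_v^s$ to the differential equation~\eqref{MISOGellzvODE}. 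At $v=1$ the coefficients $z(1-v)+v(1-z)\lo z$ and $z(1-v)+2v(1-z)\lo z$ collapse to $(1-z)\lo z$ and $2(1-z)\lo z$ while $1-v$ vanishes, so the homogeneous part of the resulting second-order linear ODE for $G_{\ell,s}(z)$ is $(1-z)\lo z\big[(1-z)G_{\ell,s}''(z)-2G_{\ell,s}'(z)\big]$; the equation $(1-z)G''-2G'=0$ has the $\Delta$-regular fundamental system $1,\ \frac1{1-z}$ with Wronskian $(1-z)^{-2}$. Variation of parameters then yields (after combining antiderivatives so the limits of integration become $0$, exactly as for~\eqref{MISONellssol})
\[
G_{\ell,s}(z)=-\int_0^z\frac{\tilde b_{\ell,s}(t)}{(1-t)\lo t}\,dt+\frac1{1-z}\int_0^z\frac{\tilde b_{\ell,s}(t)}{\lo t}\,dt,
\]
where the inhomogeneity collects $E_vD_v^sb_\ell(z,v)$ together with the correction terms produced when the $v$-derivatives fall on the $v$-dependent coefficients of~\eqref{MISOGellzvODE}:
\[
\tilde b_{\ell,s}(z)=E_vD_v^sb_\ell(z,v)-s(1-z)\big(-z+(1-z)\lo z\big)G_{\ell,s-1}''(z)+s\big(-z+2(1-z)\lo z\big)G_{\ell,s-1}'(z)+sG_{\ell,s-1}(z).
\]
One checks that this solution meets the initial conditions $G_{\ell,s}(0)=G_{\ell,s}'(0)=0$ (which hold since $G_\ell(z,v)=\Gro(z^{\ell})$, resp.\ since $Y_{1,1}=0$ when $\ell=1$), and the $\Delta$-regularity of all $G_{\ell,s}(z)$ follows inductively from the closure of $\Delta$-regular functions under products, differentiation and integration (Lemma~\ref{MISOsingana}), just as for $N_{\ell,s}(z)$.

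The heart of the argument is the local expansion~\eqref{MISOGellsexp}. Feeding the induction hypothesis $G_{r,j}(z)=\frac{\alpha_{r,j}}{(1-z)^{r+j}\Lo zj}+\Gro\big(\frac1{(1-z)^{r+j}\Lo z{j+1}}\big)$ (and $G_{r,0}(z)=\frac{z^r}{r(1-z)^r}$, which carries no logarithm) into $\tilde b_{\ell,s}(z)$ and carefully determining the singular order of each summand, one finds that most terms of~\eqref{MISObellzvDefG} — namely the $\dzpZwo G_r\cdot G_{\ell-r}$ sum, the triple-product sum $\sum\dzp G_r\dzp G_q\cdot G_{\ell-r-q}$, and the $2v(1-z)\dzp G_r\cdot G_{\ell-r}$ sum, all carrying $(1-z)^2$ or $(1-z)$ but no extra factor $\lo z$ — contribute only at order $\Gro\big(\frac1{(1-z)^{\ell+s}\Lo zs}\big)$, one logarithmic factor below the leading order, and the same holds for the summand $sG_{\ell,s-1}(z)$. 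The leading order $\frac1{(1-z)^{\ell+s}\Lo z{s-1}}$ is reached by exactly two sources: the two correction terms involving $G_{\ell,s-1}''$ and $G_{\ell,s-1}'$ (using $-z+(1-z)\lo z\to-1$ and $-z+2(1-z)\lo z\to-1$ as $z\to1$), which together contribute $s(\ell+s-1)^2\alpha_{\ell,s-1}$, and the summand $v(1-z)^2\lo z\sum_{r=1}^{\ell-1}\dzp G_r\dzp G_{\ell-r}$, whose image under $E_vD_v^s$ contributes $\sum_{r=1}^{\ell-1}\sum_{j=0}^s\binom sj(r+j)(\ell-r+s-j)\alpha_{r,j}\alpha_{\ell-r,s-j}$. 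Thus $\tilde b_{\ell,s}(z)=\frac{\beta_{\ell,s}}{(1-z)^{\ell+s}\Lo z{s-1}}\big(1+\Gro(\frac1{\lo z})\big)$ with $\beta_{\ell,s}$ the sum of these two contributions, and applying Lemma~\ref{MISOsingana} for singular integration to the integral representation gives~\eqref{MISOGellsexp} with $\alpha_{\ell,s}=\frac{\beta_{\ell,s}}{(\ell+s)(\ell+s-1)}$.

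It then remains to identify this recursively defined $\alpha_{\ell,s}$ with the claimed closed form $\frac{s!}{\ell+s}\binom{\ell+s-1}{s}=\frac{(\ell+s-1)!}{(\ell+s)(\ell-1)!}$. For $\ell=1$ the double sum is empty and the recursion reads $\alpha_{1,s}=\frac{s^2}{s+1}\alpha_{1,s-1}$, which is immediate. For $\ell\ge2$, substituting the closed form and using $(r+j)\alpha_{r,j}=\frac{(r+j-1)!}{(r-1)!}$ reduces the identity $\alpha_{\ell,s}=\frac{\beta_{\ell,s}}{(\ell+s)(\ell+s-1)}$ to
\[
\frac{(\ell+s-1)!}{(\ell-2)!\,s!}=\sum_{r=1}^{\ell-1}\sum_{j=0}^s\binom{r+j-1}{j}\binom{\ell-r-1+s-j}{s-j},
\]
whose inner sum over $j$ equals $\binom{\ell+s-1}{s}$ by the Vandermonde-type convolution $\sum_{j=0}^s\binom{a+j}{j}\binom{b+s-j}{s-j}=\binom{a+b+s+1}{s}$ (with $a=r-1$, $b=\ell-r-1$); summing over the $\ell-1$ values of $r$ gives $(\ell-1)\binom{\ell+s-1}{s}=\frac{(\ell+s-1)!}{(\ell-2)!\,s!}$, as required. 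I expect the main obstacle to be the bookkeeping in the preceding paragraph — correctly sorting which of the many summands of $\tilde b_{\ell,s}(z)$ sit at the top logarithmic order and pinning down their coefficients — whereas once that is settled the closed form for $\alpha_{\ell,s}$ drops out of a single standard binomial identity, and the remaining $\Delta$-regularity and initial-condition checks are routine analogues of the arguments for $N_{\ell,s}(z)$.
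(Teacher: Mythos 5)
Your proposal is correct and follows essentially the same route as the paper: explicit base case $s=0$, apply $E_vD_v^s$ to \eqref{MISOGellzvODE}, solve the resulting second-order ODE by variation of parameters (your two-antiderivative form is obtained from the paper's iterated integral \eqref{MISOGellssol} by a single integration by parts), identify the same three leading-order contributions, and verify the closed form $\alpha_{\ell,s}=\frac{s!}{\ell+s}\binom{\ell+s-1}{s}$ via the same Vandermonde convolution — for which you supply a bit more detail than the paper does.
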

\begin{proof}
We will show this lemma by using induction with respect to $\ell$ and $s$.
First we consider the case $s=0$. Using definition~\eqref{MISOGellzvDef1} we get
\begin{equation*}
  G_{\ell,0}(z) = \frac{1}{\ell} \frac{z^{\ell}}{(1-z)^{\ell}}, \quad \text{for $\ell \ge 1$},
\end{equation*}
thus showing that Lemma~\ref{LemmaMomsGells} holds for $s=0$.

Next we treat the case $\ell, s \ge 1$ and consider the differential equation~\eqref{MISOGellzvODE} for the functions $G_{\ell}(z,v)$ with inhomogeneous part $b_{\ell}(z,v)$ given by \eqref{MISObellzvDefG}. Applying the operator $E_{v} D_{v}^{s}$ to \eqref{MISOGellzvODE} yields the following differential equation for $G_{\ell,s}(z)$, $\ell, s \ge 1$:
\begin{equation}\label{MISOGellODE}
  (1-z)^{2} \lo{z} G_{\ell,s}''(z) - 2 (1-z) \lo{z} G_{\ell,s}'(z) = b_{\ell,s}(z),
\end{equation}
where the inhomogeneous part $b_{\ell,s}(z)$ is given by
\begin{align}
b_{\ell,s}(z) & = s (1-z) \Big(1-(1-z)-(1-z)\lo{z}\Big) G_{\ell,s-1}''(z) \notag \\
& \quad  \mbox{} - s \Big(1-(1-z)-2(1-z)\lo{z}\Big) G_{\ell,s-1}'(z) \notag \\
& \quad \mbox{} + (1-z)^{2} \lo{z} \sum_{r=1}^{\ell-1} \sum_{s_{1}+s_{2}=s, s_{1}, s_{2} \ge 0}
\binom{s}{s_{1},s_{2}} G_{r,s_{1}}'(z) G_{\ell-r,s_{2}}'(z) \notag \\
& \quad \mbox{} + s G_{\ell,s-1}(z) \notag \\
& \quad \mbox{} - (1-z)^{2} \sum_{r=1}^{\ell-1} \sum_{s_{1}+s_{2}=s, s_{1}, s_{2} \ge 0} \binom{s}{s_{1}, s_{2}} G_{r,s_{1}}''(z) G_{\ell-r,s_{2}}(z) \label{MISObellDefG} \\
& \quad \mbox{} + (1-z)^{2} \sum_{r=1}^{\ell-1} \sum_{q=1}^{\ell-r-1} \sum_{s_{1}+s_{2}+s_{3}=s, s_{1}, s_{2}, s_{3} \ge 0} \binom{s}{s_{1}, s_{2}, s_{3}} G_{r,s_{1}}'(z) G_{q,s_{2}}'(z) G_{\ell-r-q,s_{3}}(z) \notag \\
& \quad \mbox{} + s (1-z)^{2} \sum_{r=1}^{\ell-1} \sum_{q=1}^{\ell-r-1} \sum_{s_{1}+s_{2}+s_{3}=s-1, s_{1}, s_{2}, s_{3} \ge 0} \binom{s-1}{s_{1}, s_{2}, s_{3}} G_{r,s_{1}}'(z) G_{q,s_{2}}'(z) G_{\ell-r-q,s_{3}}(z) \notag \\
& \quad \mbox{} + s (1-z)^{2} \lo{z} \sum_{r=1}^{\ell-1} \sum_{s_{1}+s_{2}=s-1, s_{1}, s_{2} \ge 0} \binom{s-1}{s_{1}, s_{2}} G_{r,s_{1}}'(z) G_{\ell-r,s_{2}}'(z) \notag \\
& \quad \mbox{} + 2(1-z) \sum_{r=1}^{\ell-1} \sum_{s_{1}+s_{2}=s, s_{1}, s_{2} \ge 0} \binom{s}{s_{1}, s_{2}} G_{r,s_{1}}'(z) G_{\ell-r,s_{2}}(z) \notag \\
& \quad \mbox{} + 2s(1-z) \sum_{r=1}^{\ell-1} \sum_{s_{1} + s_{2} = s-1, s_{1}, s_{2} \ge 0} \binom{s-1}{s_{1}, s_{2}} G_{r,s_{1}}'(z) G_{\ell-r,s_{2}}(z). \notag
\end{align}

The differential equation~\eqref{MISOGellODE} can be solved easily; below we state the particular solution, which satisfies the initial conditions $G_{\ell,s}(0)=0$ and $G_{\ell,s}'(0)=0$ and thus is indeed the required solution (for $\ell, s \ge 1$):
\begin{equation}\label{MISOGellssol}
  G_{\ell,s}(z) = \int_{0}^{z} \frac{1}{(1-t)^{2}} \left(\int_{0}^{t} \frac{b_{\ell,s}(u)}{\lo{u}} du\right) dt,
\end{equation}
with $b_{\ell,s}(z)$ defined in \eqref{MISObellDefG}.

Again we observe that the representation \eqref{MISOGellssol} together with the closure properties for singular differentiation and integration inductively shows that all $G_{\ell,s}(z)$, $\ell,s \ge 1$, are $\Delta$-regular functions. Note that it is known a priori from the definition of $N_{\ell,s}(z)$ and simple majorization arguments that $N_{\ell,s}(z)$ is analytic for $|z| < 1$, so we do not have to take care about the analyticity of $G_{\ell,s}(z)$ around $z=0$ (which, of course, can also be obtained easily from \eqref{MISOGellssol} by showing that $b_{\ell,s}(0)=0$).

We proceed by showing in an inductive way the local expansions \eqref{MISOGellsexp} of $G_{\ell,s}(z)$ in a complex neighbourhood of $z=1$. 
To do this we consider $\ell, s \ge 1$ and assume that \eqref{MISOGellsexp} holds for all $(j,r)$ with $1 \le j \le \ell$, $0 \le r \le s$ and $(j,r) \neq (\ell,s)$.
When examining each summand of $b_{\ell,s}(z)$ as given in \eqref{MISObellDefG} and using the induction hypothesis as well as Lemma~\ref{MISOsingana}, it turns out that only the first three summands of \eqref{MISObellDefG} give major contributions, which are stated below:
\begin{align*}
  & \bullet \; s (1-z) \Big(1-(1-z)-(1-z)\lo{z}\Big) G_{\ell,s-1}''(z) \\
  & \quad = \frac{s(\ell+s-1) (\ell+s) \alpha_{\ell,s-1}}{(1-z)^{\ell+s} \Lo{z}{s-1}} \cdot \Big(1+\Big(\frac{1}{\lo{z}}\Big)\Big), \\
  & \bullet \; -s \Big(1-(1-z)-2(1-z)\lo{z}\Big) G_{\ell,s-1}'(z) \\
  & \quad = \frac{-s(\ell+s-1) \alpha_{\ell,s-1}}{(1-z)^{\ell+s} \Lo{z}{s-1}} \cdot \Big(1+\Big(\frac{1}{\lo{z}}\Big)\Big), \\
  & \bullet \; (1-z)^{2} \lo{z} \sum_{r=1}^{\ell-1} \sum_{s_{1}+s_{2}=s, s_{1}, s_{2} \ge 0} \binom{s}{s_{1},s_{2}} G_{r,s_{1}}'(z) G_{\ell-r,s_{2}}'(z) \\
  & \quad = \frac{\sum_{r=1}^{\ell-1} \sum_{s_{1}+s_{2}=s, s_{1}, s_{2} \ge 0} \binom{s}{s_{1}, s_{2}} (r+s_{1}) \alpha_{r,s_{1}} (\ell-r+s_{2}) \alpha_{\ell-r,s_{2}}}{(1-z)^{\ell+s} \Lo{z}{s-1}} \cdot \Big(1+\Big(\frac{1}{\lo{z}}\Big)\Big),
\end{align*}
whereas the contribution of the remaining terms of $b_{\ell,s}(z)$ is of order $\mathcal{O}\left(\frac{1}{(1-z)^{\ell+s} \Lo{z}{s}}\right)$.
Adding these contributions we obtain that $b_{\ell,s}(z)$ has the following local expansion around $z=1$:
\begin{equation}\label{MISObellsexpG}
\begin{split}
  b_{\ell,s}(z) & = \frac{s (\ell+s-1)^{2} \alpha_{\ell,s-1} + \sum_{r=1}^{\ell-1} \sum_{s_{1}+s_{2}=s, s_{1}, s_{2} \ge 0} \binom{s}{s_{1}, s_{2}} (r+s_{1}) \alpha_{r,s_{1}} (\ell-r+s_{2}) \alpha_{\ell-r,s_{2}}}{(1-z)^{\ell+s} \Lo{z}{s-1}}\\
  & \quad \times \Big(1+\Big(\frac{1}{\lo{z}}\Big)\Big).
\end{split}
\end{equation}
Using the representation \eqref{MISOGellssol} and \eqref{MISObellsexpG} yields after applications of singular integration the following local expansion of $G_{\ell,s}(z)$:
\begin{equation}\label{MISOGellszstruct}
  G_{\ell,s}(z) = \frac{\alpha_{\ell,s}}{(1-z)^{\ell+s} \Lo{z}{s}} \cdot \Big(1+\Big(\frac{1}{\lo{z}}\Big)\Big),
\end{equation}
where the numbers $\alpha_{\ell,s}$ satisfy the following recurrence:
\begin{equation}\label{MISOalphaellsrec}
  \alpha_{\ell,s} = \frac{s (\ell+s-1)^{2} \alpha_{\ell,s-1} + \sum_{r=1}^{\ell-1} \sum_{s_{1}+s_{2}=s, s_{1}, s_{2} \ge 0} \binom{s}{s_{1},s_{2}} (r+s_{1}) \alpha_{r,s_{1}} (\ell-r+s_{2}) \alpha_{\ell-r, s_{2}}}{(\ell+s) (\ell+s-1)}.
\end{equation}
Plugging the induction hypothesis \eqref{MISOalphaells} for all $\alpha_{j,r}$ with $(j,r) < (\ell,s)$ into the right hand side of \eqref{MISOalphaellsrec} yields, after an application of the Vandermonde convolution formula, that $\alpha_{\ell,s} = \frac{s!}{\ell+s} \binom{\ell+s-1}{s}$ also holds.
Thus the expansion \eqref{MISOGellsexp} is also valid for $(\ell,s)$; this completes the proof of Lemma~\ref{LemmaMomsGells}.
\end{proof}

Applying singularity analysis to the expansion of $G_{\ell,s}(z)$ given in Lemma~\ref{LemmaMomsGells} together with the definition~\eqref{MISOGellzvDef1} immediately shows the following asymptotic growth behaviour of the $s$-th factorial moments of $\rvY$ (with $\ell \ge 1$, $s \ge 0$):
\begin{equation}\label{MISOYnellfakmom}
   \E(\fallfak{\rvY}{s}) = \frac{n}{\binom{n}{\ell}} [z^{n}] G_{\ell,s}(z) = \frac{\ell}{\ell+s} \Big(\frac{n}{\log n}\Big)^{s} \cdot \Big(1+\mathcal{O}\Big(\frac{1}{\log n}\Big)\Big).
\end{equation}
Using \eqref{MISOfallfakordmom} we obtain the first part of Theorem~\ref{MISOthe3}. 
This implies that, after suitable normalization, the $s$-th integer moments of $\rvY$ converge to the moments of a beta-distributed random variable with parameters $1$ and $\ell$:
\begin{equation*}
\E\Big(\big(\frac{\log n}{n} \rvY\big)^s\Big) =  \frac{\ell}{\ell+s} + \mathcal{O}\Big(\frac{1}{\log n}\Big),
\end{equation*}
which also proves the limit law stated in Theorem~\ref{MISOthe3}.

\end{document}